\documentclass{amsart}
\usepackage[utf8]{inputenc}
\usepackage{zeyu}
\usepackage{calc}

\makeatletter
\@for\@thmenv:={theorem,lemma,corollary,proposition,conjecture,problem,definition,example,remark}\do{%
  \expandafter\let\csname theH\@thmenv\endcsname\thetheorem}
\makeatother

\makeatletter
\def\paragraph{\@startsection{paragraph}{4}%
  \z@\z@{-\fontdimen2\font}%
  {\normalfont\bfseries}}
\makeatother

\title{On Counting Independent Sets in Regular Hypergraphs}

\author{Michail Sarantis}
\address{Skyserv Handling Services, Athens, Greece}
\email{msarantis@hotmail.com}

\author{Prasad Tetali}
\address{Department of Mathematical Sciences, Carnegie Mellon University, Pittsburgh, PA 15213, USA; research supported in part by the Alexander M. Knaster Professorship and the NSF grant DMS-2151283}
\email{ptetali@cmu.edu}

\author{Zeyu Zheng}
\address{Department of Mathematical Sciences, Carnegie Mellon University, Pittsburgh, PA 15213, USA}
\email{zeyuzhen@andrew.cmu.edu}

\begin{document}

\begin{abstract}
Balogh, Bollob\'as and Narayanan conjectured that among all finite simple \(r\)-uniform \(d\)-regular hypergraphs, the number of weak independent sets is maximized by a natural quasi-bipartite construction \(H_{r,d}\).  We give three types of evidence for this conjecture.  For every fixed \(r\), we prove the conjectured asymptotic exponential rate whenever the twin quotient has maximum pair codegree \(o(d)\).  The proof uses the hypergraph container method.  For hypergraphs with no cross-edges, the occupancy method gives the sharper error bound \(O_r(\log d/d)\). We show that a stronger version of the conjecture in terms of the so-called occupancy fraction is {\em not true}, by providing a counterexample for every $r\ge 3$.  We also prove exact cases of the conjecture when the hypergraph is $2$-regular. Using an entropy decomposition in the dual edge-cover problem, we settle every odd \(r\) and the case \((r,d)=(4,2)\).
\end{abstract}

\maketitle

\section{Introduction}
In a graph \(G=(V,E)\), an independent set \(I\) is a set of vertices such that no two vertices are connected. Let $\cI=\cI(G)$ denote the collection of all independent sets of $G$ and $i(G):=|\cI|$. The problem of maximizing the number of independent sets in a regular graph has a long history. Kahn \cite{kahn2001entropy} proved that disjoint copies of \(K_{d,d}\) maximize the number of independent sets over all \(d\)-regular bipartite graphs, while Zhao \cite{zhao2010number} extended this result to all \(d\)-regular graphs.  In particular, if \(G\) is a \(d\)-regular graph on \(n\) vertices, then
    \[
        \frac{\log i(G)}{n}\leq \frac{\log i(K_{d,d})}{2d}.
    \]
Sah, Sawhney, Stoner and Zhao \cite{sah2019number} proved a general upper bound without any assumption on the structure of \(G\), resolving the extremal problem in terms of the degrees of the vertices. Namely, they showed that if \(G\) is a graph with no isolated vertices, then
    \[
        i(G)\leq \prod_{uv\in E(G)} i(K_{d_u,d_v})^{1/(d_ud_v)}\,,
    \]
where \(d_v\) is the degree of the vertex \(v\) in \(G\).

Progress on the analogous hypergraph problem is much more limited.  Before we proceed to the details, we recall some basic definitions.  Throughout the paper all hypergraphs are finite and simple.  A hypergraph is called \(r\)-uniform if every hyperedge contains precisely \(r\) vertices, and \(d\)-regular if every vertex belongs to exactly \(d\) edges.  We also call a hypergraph linear if every two hyperedges intersect in at most one vertex.

In the setting of hypergraphs, there is more than one natural extension of the definition of independent sets.  If \(G=(V,E)\) is a hypergraph, one may call \(I\subseteq V\) independent if no two vertices of \(I\) belong to the same edge; another natural definition is to call \(I\) independent if it does not contain an edge in its entirety.  In the first case, we are talking about \emph{strong} independent sets, while in the second we are talking about \emph{weak} independent sets.  Notice that the former can be transformed into independent sets in graphs by replacing each hyperedge with a clique, while counting weak independent sets is a genuinely hypergraph problem.  For \(r\)-uniform hypergraphs, we use the notation \(i_2(G)\) for strong independent sets and \(i_r(G)\) for weak independent sets. One may similarly define $i_j(G)$ for sets that contain less than $j$ vertices from each hyperedge. When there is no danger of confusion, we simply write \(i(G)\).

For linear hypergraphs, the problem was studied for strong independent sets by Ordentlich and Roth \cite{ordentlich2004independent}, while 
Balobanov and Shabanov \cite{balobanov2018number} showed bounds for $i_j(G)$ for all $2\leq j \leq r$. In recent work, Cohen, Perkins, Sarantis and Tetali \cite{cohen2022number} obtained asymptotically sharp bounds for weak independent sets in $r$-uniform, $d$-regular linear hypergraphs with no cross-edges i.e. no linear triangles. More specifically, they proved that for any $n$-vertex hypergraph satisfying the above,
\begin{equation}\label{eqn:cpst}
    \frac{\log i_r(G)}{n}\leq \frac{r-1}{r}+O\left(d^{-1/(r-1)}\right).
\end{equation}
Their proof is via the occuppancy method of Davies, Jenssen, Perkins and Roberts \cite{davies2017independent}, which we will discuss in more detail (and use) in \Cref{sec:occupancy}.

Shortly afterwards, Balogh, Bollob\'as and Narayanan \cite{balogh2021counting} constructed the following hypergraph: Consider $rd$ vertices and mark $d$ of them. Partition the remaining \((r-1)d\) vertices into \(d\) parts of size \(r-1\), and take as edges all unions of one marked vertex with one part. Denote this hypergraph by $H_{r,d}$. Then,
\begin{conjecture}[Balogh--Bollob\'as--Narayanan]\label{conj:bbn}
If \(G\) is any \(r\)-uniform, \(d\)-regular hypergraph on \(n\) vertices, then
\[
    i_r(G)\leq i_r(H_{r,d})^{n/(rd)},
\]
or equivalently,
\[
    \frac{\log i_r(G)}{n}\leq \frac{\log i_r(H_{r,d})}{rd}.
\]
\end{conjecture}
They proved the conjecture for a certain family of graphs, which they called quasi-bipartite, containing the extremal construction.

\subsection*{The hard-core model and the occupancy fraction}
To state our main results towards \Cref{conj:bbn}, we need some extra definitions. For a hypergraph \(G\), let \(\mathcal I(G)\) be the family of weak independent sets, and define its independence polynomial as
$$Z_G(\lam)=\sum_{I\in\mathcal I(G)}\lam^{|I|}.$$
The \textit{hard-core model} with fugacity $\lam\geq0$ is the distribution on independent sets on $G$ for which
$$\PP(\mathbf{I}=I)=\frac{\lam^{|I|}}{Z_G(\lam)}.$$
The occupancy fraction $\alpha_G(\lam)$ is the expected fraction of vertices in a randomly chosen independent set according to the hard-core model, i.e.
$$\alpha_G(\lam)=\frac{\E|\mathbf{I}|}{n}.$$
A straightforward computation shows that $\E|\mathbf{I}|=\frac{\lam Z'_G(\lam)}{\lam}$, hence $\alpha_G(\lam)=\frac{\lam (\log Z_G(\lam))'}{n}$ and
\begin{equation}\label{eqn:occ_int}
    \frac{\log Z_G(\lam)}{n}=\int_0^\lam \frac{\alpha_G(t)}{t}dt.
\end{equation}
Since $i(G)=Z_G(1)$ and due to \eqref{eqn:occ_int}, there is a clear set of implications. An inequality between occupancy fractions uniformly for $t>0$ implies a corresponding inequality of the partition functions. In turn, this implies an inequality for the number of independent sets. 

For graphs, $K_{d,d}$ is indeed the maximizer of all these quantities, and the proofs progressed from the weakest to the strongest statement \cite{kahn2001entropy, galvin2004weighted, davies2017independent}. For matchings in graphs and the monomer-dimer model, the occupancy fraction statement was proved directly to imply the rest \cite{davies2017independent}. It is natural to state the corresponding generalizations of \Cref{conj:bbn}.
\begin{conjecture}\label{conj:bbn_general}
    If \(G\) is any \(r\)-uniform, \(d\)-regular hypergraph on \(n\) vertices, then
    \begin{enumerate}[(i)]
        \item $\alpha_G(\lam)\leq \alpha_{H_{r,d}}(\lam)$ for all $\lam\geq0$.
        \item $\frac{\log Z_G(\lam)}{n}\leq \frac{\log Z_{H_{r,d}}(\lam)}{rd}$ for all $\lam\geq0$.
        \item $\frac{\log i_r(G)}{n}\leq \frac{\log i_r(H_{r,d})}{rd}.$
    \end{enumerate}
\end{conjecture}
As we discussed, $(i)\Rightarrow (ii)\Rightarrow (iii)$, and in fact the first two imply the third as long as they hold for $\lam\in[0,1].$

\subsection*{Our results}
We begin with a rather unexpected counterexample. Let $\widetilde{H}_{r,d}$ be the hypergraph obtained by removing a perfect matching from \(H_{r,d}\). This is an $r$-uniform, $(d-1)$-regular hypergraph.
\begin{proposition}\label{prop:counterex}
    \Cref{conj:bbn_general} (i) is false for every $r\geq 3$. More specifically, for every $r\geq 3$ there exists a positive integer $d_0:=d_0(r)$ such that
    $$\alpha_{\widetilde{H}_{r,d}}(1)>\alpha_{H_{r,d-1}}(1)\,,$$
    for all $d\geq d_0$.
\end{proposition}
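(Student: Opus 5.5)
The plan is to compute both occupancy fractions in closed form at $\lambda=1$ and compare their asymptotic expansions as $d\to\infty$. The leading terms agree, and the sign is decided by the first exponentially small correction.

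\textbf{Partition functions.} Write $A=(1+\lambda)^{r-1}$ and $B=A-\lambda^{r-1}$, the weights of a part of size $r-1$ when it is unconstrained and when it is forbidden from being full, respectively. In $H_{r,d}$, a set $S$ of marked vertices is either empty or forbids every part from being full. Hence
\[
Z_{H_{r,d}}(\lambda)=A^{d}+\big((1+\lambda)^{d}-1\big)B^{d}.
\]
In $\widetilde H_{r,d}$, label the removed matching as $\{m_i\}\cup P_i$. Then $S$ forbids $P_j$ iff $S\ni m_i$ for some $i\neq j$. If $|S|\ge 2$ every part is forbidden, and if $S=\{m_i\}$ every part except $P_i$ is forbidden. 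Thus
\[
Z_{\widetilde H_{r,d}}(\lambda)=A^{d}+d\lambda A B^{d-1}+\big((1+\lambda)^{d}-1-d\lambda\big)B^{d}.
\]
Then $\alpha_G(1)=\frac{1}{n}\frac{d}{d\lambda}\log Z_G\big|_{\lambda=1}$, with $n=r(d-1)$ for $H_{r,d-1}$ and $n=rd$ for $\widetilde H_{r,d}$.

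\textbf{Asymptotic expansion.} Factor out the dominant term $(1+\lambda)^{m}B^{m}$ in each partition function ($m=d-1$, resp.\ $m=d$). At $\lambda=1$ the ratio $A^{m}/(2^{m}B^{m})=\rho^{m}$ is exponentially small, where
\[
\rho=\frac{2^{r-2}}{2^{r-1}-1}\in\big(\tfrac12,1\big)
\]
for $r\ge3$. Both graphs then share the leading term
\[
\alpha_\infty=\frac{1}{r}\Big(\tfrac12+\tfrac{B'}{B}\big|_{\lambda=1}\Big).
\]
The remaining corrections come in two types:
\begin{itemize}
\item terms of order $\mathrm{poly}(d)\,2^{-d}$, from the $-1$ and $-1-d\lambda$ corrections and from the $d\lambda AB^{d-1}$ term;
\item terms of order $\rho^{m}$, from $\log(1+x)$ with $x\approx \rho^{m}$.
\end{itemize}
Since $\rho>1/2$, the $\rho^m$ terms dominate every $\mathrm{poly}(d)2^{-d}$ term. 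Differentiating, the $\rho^m$ term contributes to $n\alpha$ approximately
\[
x\,m\Big(\tfrac{A'}{A}-\tfrac{B'}{B}-\tfrac12\Big),
\]
so it contributes approximately $\frac{c}{r}\rho^{m}$ to $\alpha$, where
\[
c=\tfrac{r-1}{2}-\tfrac{(r-1)(2^{r-2}-1)}{2^{r-1}-1}-\tfrac12=\tfrac{r-1}{2(2^{r-1}-1)}-\tfrac12<0
\]
for $r\ge3$. Therefore
\[
\alpha_{\widetilde H_{r,d}}(1)-\alpha_{H_{r,d-1}}(1)=\frac{|c|}{r}\rho^{d-1}(1-\rho)+O\big(\mathrm{poly}(d)\,2^{-d}+\rho^{2d}d\big).
\]
The main term is positive and dominates the error for $d\ge d_0(r)$, which proves the statement.

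\textbf{Main obstacle.} The hard part is bookkeeping the error terms carefully enough to be sure nothing of order $\rho^{d}$ is missed. In particular:
\begin{itemize}
\item the derivatives of $\log(2^{m}-1)$ and $\log(2^{d}-1-d)$ differ from $m/2$ only by $O(d^{2}2^{-d})$;
\item the $d\lambda AB^{d-1}$ term in $\widetilde H_{r,d}$ is $O(d2^{-d})$ relative to the main term;
\item the cross terms in $\log(1+x+y)$ are of order $\rho^{2m}d$.
\end{itemize}
To control these, I would write each $\log Z$ as (main term) $+\log(1+\varepsilon)$ with explicit $\varepsilon$, differentiate exactly, and bound $\varepsilon,\varepsilon'$ crudely. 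The sign of $c$ reduces to the elementary inequality $r-1<2^{r-1}-1$, valid for $r\ge3$. This also explains why the counterexample fails at $r=2$.
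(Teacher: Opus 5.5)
Your proposal is correct and follows essentially the same route as the paper. Both compute the same two partition functions and differentiate $\log Z$ at $\lambda=1$, and both use the fact that the $\rho^{d-1}$ correction dominates all $\mathrm{poly}(d)\,2^{-d}$ terms because $\rho>1/2$; your coefficient $|c|/r$ equals the paper's $\gamma=(q-s)/(2rq)$ with $q=2^{r-1}-1$, $s=r-1$, and the paper simply writes both occupancy fractions in exact closed form before taking the limit of the normalized difference, which packages the error bookkeeping you describe.
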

The proof is deferred to \Cref{sec:appendix-counterexample}. This doesn't refute the other conjectures, but shows they can't be derived by a uniform bound on the occupancy fraction. In other words, the application of the occupancy method as is can't yield \Cref{conj:bbn_general} (ii), even in our restricted family of hypergraphs. We still believe it to be true.

In light of \Cref{prop:counterex}, we will pursue asymptotic versions in the spirit of \eqref{eqn:cpst}. For \(H_{r,d}\), a straightforward computation gives
\[
    Z_{H_{r,d}}(\lam)
    =(1+\lam)^{(r-1)d} +\big((1+\lam)^d-1\big)
    \big((1+\lam)^{r-1}-\lam^{r-1}\big)^d.
\]
In particular, taking \(\lam=1\),
\begin{equation}\label{eqn:Hrd_asympt}
    \frac{\log i_r(H_{r,d})}{rd}
    =\frac{\log\Big(2^{(r-1)d}+(2^d-1)(2^{r-1}-1)^d\Big)}{rd}
    =1+\frac{\log\left(1-2^{-(r-1)}\right)}{r}
    +O_r\left(\frac{\rho_r^d}{d}\right),
\end{equation}
where \(\rho_r=\big(2(1-2^{-(r-1)})\big)^{-1}<1\) for \(r\geq3\).  Note that the constant \(1+\log\left(1-2^{-(r-1)}\right)/r\) is between \((r-1)/r\) and \(1\).  This should be compared with the sharper asymptotic estimates that are available under the linearity assumption of \eqref{eqn:cpst}.

For some of our results, we will retain the cross-edge-free assumption of \cite{cohen2022number}, modified accordingly for non-linear settings. We define a cross-edge as a triple of distinct hyperedges \(e,f,g\) for which there exist vertices \(u,v,w\), not necessarily distinct, such that \(u\in e\cap f\), \(v\in f\cap g\) and \(w\in g\cap e\), and at least one of \(u,v,w\) is not in \(e\cap f\cap g\). The condition excludes all nondegenerate Berge triangles and, since the witnesses need not be distinct, some additional overlapping triples of hyperedges.  In the linear case it is exactly Berge-triangle-freeness, a standard forbidden configuration in the extremal theory of Berge hypergraphs \cite{gerbner2017extremal, gyHori2006triangle}.  Thus the class contains the \(r\)-uniform \(d\)-regular hypergraphs of Berge girth at least four, and such regular high-girth hypergraphs form a nontrivial family studied through Moore-type bounds and cage problems \cite{ellis2013regular,erskine2022small}.  Unlike linearity or a high-girth assumption, the condition also allows many hyperedges to share a large common set of vertices, as happens in \(H_{r,d}\).

Our first main result is an asymptotic version of \Cref{conj:bbn_general} (ii) on bounded intervals. 

\begin{theorem}\label{thm:main_container}
    Fix \(r\geq3\) and \(L>0\).  Let \(G\) be an \(r\)-uniform, \(d\)-regular hypergraph on \(n\) vertices with no cross-edges, where \(d\geq2\). Then, uniformly for \(0\leq\lam\leq L\),
\[
    \frac{\log Z_G(\lam)}{n}
    \leq
    \frac{\log Z_{H_{r,d}}(\lam)}{rd}
    +O_{r,L}\left(\frac{(\log d)^2}{d^{1/(r+1)}}\right).
\]
In particular, taking \(\lam=1\) and by \eqref{eqn:Hrd_asympt},
\[
    \frac{\log i_r(G)}{n}
    \leq
    1+\frac{\log\left(1-2^{-(r-1)}\right)}{r}
    +O_r\left(\frac{(\log d)^2}{d^{1/(r+1)}}\right).
\]
\end{theorem}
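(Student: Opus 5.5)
The plan is to pass to the twin quotient, apply the hypergraph container method to the sets of ``full'' twin classes, and then solve an optimization problem inside each container.

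\emph{Reduction to the twin quotient.} Call two vertices twins if they lie in exactly the same hyperedges. The twin classes partition \(V(G)\), and every hyperedge is a union of twin classes. Let \(Q\) be the quotient hypergraph on the classes; \(Q\) need not be uniform, and a class \(A\) of size \(s_A\) is still contained in exactly \(d\) edges. A set \(I\subseteq V(G)\) is weakly independent iff no edge has all of its classes fully contained in \(I\). Record, for each \(I\), the set \(F(I)\) of classes \(A\) with \(A\subseteq I\); then \(F(I)\) is an independent set of \(Q\) in the weak sense. Summing over \(I\) with \(F(I)=F\) gives
\[
Z_G(\lam)=\sum_{F\in\mathcal I(Q)}\prod_{A\in F}\lam^{s_A}\prod_{A\notin F}\big((1+\lam)^{s_A}-\lam^{s_A}\big).
\]
Next I would use the cross-edge-free hypothesis to control the codegrees of \(Q\). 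If two distinct classes \(A,B\) lie in two common edges \(e,f\), then together with any third edge \(g\) through \(A\) or \(B\) this should produce a cross-edge, unless the incidences are exactly those of \(H_{r,d}\). The aim is a bound of the form: every pair of classes has codegree \(O(1)\) in \(Q\), or else the configuration is a star of edges sharing one class, as in \(H_{r,d}\). This bound is what the container lemma needs.

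\emph{Containers.} I would apply the hypergraph container lemma, in the Balogh--Morris--Saxton/Saxton--Thomason form, to \(Q\), viewed as an at most \(r\)-uniform hypergraph with codegrees \(\Delta_j\) controlled as above, and with parameter \(\tau\approx d^{-1/(r+1)}\) up to logs. This yields a family \(\mathcal C\) of containers with \(\log|\mathcal C|=O_r\big(n\,\tau\log^2 d/d^{0}\big)\), more precisely \(O_r(n (\log d)^2 d^{-1/(r+1)})\) after balancing. Each \(F\in\mathcal I(Q)\) lies in some \(C\in\mathcal C\), and each \(C\) spans at most \(\varepsilon\, e(Q)\) edges with \(\varepsilon=d^{-1/(r+1)}\). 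Then
\[
Z_G(\lam)\le|\mathcal C|\max_{C}\prod_{A\in C}(1+\lam)^{s_A}\prod_{A\notin C}\big((1+\lam)^{s_A}-\lam^{s_A}\big),
\]
since \(\lam^{s}+((1+\lam)^s-\lam^s)=(1+\lam)^s\) on classes allowed to be full. The uniformity in \(\lam\le L\) enters only through constants.

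\emph{The optimization (main obstacle).} It remains to show that for any set \(C\) of classes spanning few edges,
\[
\sum_{A\in C}s_A\log(1+\lam)+\sum_{A\notin C}\log\big((1+\lam)^{s_A}-\lam^{s_A}\big)\le \frac{n}{rd}\log Z_{H_{r,d}}(\lam)+O(\varepsilon n).
\]
I would distribute each class's contribution over its \(d\) edges with weight \(1/d\). This reduces the problem to a per-edge inequality: an edge \(e\) partitioned into classes of sizes \(s_1,\dots,s_k\) summing to \(r\), not entirely inside \(C\), receives at most the per-edge value \(\frac{1}{d}\log Z_{H_{r,d}}(\lam)\) up to \(O(\log(1+\lam)/d)\) corrections. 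The edges entirely inside \(C\) are charged to the \(\varepsilon\) error. The extremal local picture should be a class of size \(1\) in \(C\) (the marked vertex) together with one class of size \(r-1\) outside \(C\). This matches \(\frac1{r}\log(1+\lam)+\frac{1}{r}\log((1+\lam)^{r-1}-\lam^{r-1})\), which is \(\frac{\log Z_{H_{r,d}}}{rd}\) up to \(O_r(\rho^d/d)\). The difficulty I expect is twofold. First, the per-edge inequality must be checked over all partitions of \(r\) and all in/out patterns; this uses concavity of \(s\mapsto\log((1+\lam)^s-\lam^s)\) relative to \(s\log(1+\lam)\). Second, the double counting must be correct when class sizes vary. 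If a pure per-edge bound fails for some pattern, I would instead solve a small LP over the fractions of edges of each type, using the regularity constraint.
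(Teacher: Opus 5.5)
Your plan is essentially the paper's proof. It passes to the twin quotient, sums over the set of fully occupied classes, and applies containers to $Q$; the paper handles the non-uniformity you gloss over by applying Saxton--Thomason to each significant uniform layer of $Q$ and intersecting the containers. It then bounds the weight inside each container.

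Two simplifications are available. First, $Q$ is in fact linear, with no $H_{r,d}$ exception: if distinct classes $A,B$ lie in two common edges, some third edge contains exactly one of them because they are not twins, and the three edges form a cross-edge. Second, the step you flag as the main obstacle is immediate. Every edge not inside $C$ contains a class outside $C$ of size at most $r-1$, so your per-edge distribution gives at most $\frac1d\log\big((1+\lambda)((1+\lambda)^{r-1}-\lambda^{r-1})\big)$ for each such edge with no correction term. This exactness matters, since an $O(1/d)$ per-edge slack would sum to order $n$ over the $nd/r$ edges. No concavity or LP is needed.
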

In fact, this will follow by a more general result on a natural quotient of the hypergraph. Two vertices are called \emph{twins} if they belong to precisely the same hyperedges, and it is trivial to see that this is an equivalence relation. Contract every twin class to a single vertex, retaining one quotient edge for each original hyperedge, and call the resulting hypergraph \(Q(G)\) the \emph{twin quotient} of \(G\).  For a hypergraph \(Q\) and distinct vertices \(x,y\in V(Q)\), write \(d_Q(x,y)=|\{e\in E(Q):x,y\in e\}|\) for their codegree, and write
\[
    \Delta_2(Q)=\max_{\substack{x,y\in V(Q)\\x\neq y}}d_Q(x,y)
\]
for the maximum pair codegree.

\begin{theorem}\label{thm:container-general}
Fix \(r\geq3\) and \(L>0\).  Let \(G=G_d\) be an \(r\)-uniform, \(d\)-regular hypergraph on \(n\) vertices, where \(d\geq2\), and suppose that \(\Delta_2(Q(G))=o(d)\) as \(d\to\infty\).  Then, uniformly for \(0\leq\lam\leq L\),
\[
    \frac{\log Z_G(\lam)}{n}
    \leq
    \frac{\log Z_{H_{r,d}}(\lam)}{rd}
    +o(1).
\]
In particular, taking \(\lam=1\),
\[
    \frac{\log i_r(G)}{n}
    \leq
    1+\frac{\log\left(1-2^{-(r-1)}\right)}{r}
    +o(1).
\]
\end{theorem}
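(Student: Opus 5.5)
Pass to the twin quotient and apply the hypergraph container method there.

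\textbf{Step 1: rewrite the partition function on the quotient.} Let \(Q=Q(G)\). Each quotient vertex \(x\) corresponds to a twin class \(C_x\) of size \(s_x\), and each edge \(e\) of \(Q\) satisfies \(\sum_{x\in e}s_x=r\). Every vertex of \(G\) lies in exactly \(d\) edges, so \(Q\) is \(d\)-regular, and each quotient edge has between \(1\) and \(r\) vertices. A weak independent set \(I\) of \(G\) is described by the set \(S\) of classes it fills completely, together with an arbitrary proper subset of every other class. The only constraint is that \(S\) contains no quotient edge. This gives
\[
Z_G(\lam)=\sum_{S\in\mathcal I(Q)}\prod_{x\in S}\lam^{s_x}\prod_{x\notin S}\big((1+\lam)^{s_x}-\lam^{s_x}\big),
\]
a weighted hard-core partition function on \(Q\). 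When every class is a singleton and \(Q\) is linear, this is the classical setting. The general weights are what make \(H_{r,d}\) the natural extremizer: there, \(Q\) has the \(d\) marked vertices of size \(1\) and the \(d\) parts of size \(r-1\).

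\textbf{Step 2: containers.} Since \(\Delta_2(Q)=o(d)\) and \(Q\) is \(d\)-regular, the codegree conditions of the Balogh--Morris--Samotij / Saxton--Thomason container lemma hold for the multi-uniform hypergraph \(Q\). It is cleanest to apply the lemma separately to each uniformity \(k\le r\) after a weighting, or to the \(r\)-uniform hypergraph obtained by blowing each \(x\) back up to \(s_x\) vertices. The codegree hypothesis should be stated on \(Q\), because twins create huge codegrees in \(G\) itself. The outcome is a family \(\mathcal C\) of containers with \(\log|\mathcal C|=o(n)\) such that every independent set lies in some \(C\in\mathcal C\), and each \(C\) spans only \(o(nd)\) edges. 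We then have \(Z_G(\lam)\le|\mathcal C|\max_C W(C)\), where \(W(C)\) is the weighted sum over all subsets \(S\subseteq C\), with no constraint. So \(W(C)\) factorizes as a product over vertices: \(x\in C\) contributes \((1+\lam)^{s_x}\), and \(x\notin C\) contributes \((1+\lam)^{s_x}-\lam^{s_x}\).

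\textbf{Step 3: the optimization.} We must show that \(\frac1n\log W(C)\le \frac{\log Z_{H_{r,d}}(\lam)}{rd}+o(1)\) whenever \(C\) spans \(o(nd)\) edges. Note that the leading term of \(\frac{\log Z_{H_{r,d}}(\lam)}{rd}\) as \(d\to\infty\) is
\[
\max\Big\{\tfrac{r-1}{r}\log(1+\lam),\ \tfrac1r\big[\log(1+\lam)+\log\big((1+\lam)^{r-1}-\lam^{r-1}\big)\big]\Big\}.
\]
Put \(f(s)=\log(1+\lam)^{s}-\log\big((1+\lam)^{s}-\lam^{s}\big)\ge 0\). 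Then
\[
\log W(C)=n\log(1+\lam)-\sum_{x\notin C}f(s_x).
\]
Deleting \(o(nd)\) edges, a fractional cover argument applies: each surviving edge \(e\) must meet the complement of \(C\). Distribute a weight of \(1/d\) per edge and use double counting, \(\sum_e\sum_{x\in e}s_x=rn/d\cdot d\). This reduces the problem to a per-edge linear program: minimize the charge \(\sum_{x\notin C}f(s_x)/d\) assigned to each edge, subject to the edge having some vertex outside \(C\). Since each vertex lies in \(d\) edges, each edge must receive at least \(\min_{\text{compositions }(s_1,\dots,s_k)\text{ of }r}\min_j f(s_j)\cdot\) (appropriate share). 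The key inequality to verify is that the best local configuration is "one vertex of size \(s\) outside, the rest inside". Optimizing over \(s\), and comparing \(f(1)\) against \(f(r-1)\) using the concavity/convexity of \(s\mapsto f(s)/s\), recovers exactly the \(H_{r,d}\) expression. The \(\log((1+\lam)^d-1)\) correction is lower order, and uniformity in \(\lam\le L\) follows from the continuity of \(f\).

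\textbf{Main obstacle.} The hardest step is Step 3: getting the local LP to yield precisely the \(H_{r,d}\) constant, rather than a weaker bound, when class sizes vary across edges. It requires a careful choice of how to split each edge's charge among its vertices, and I would first try charging each uncovered vertex \(x\) the amount \(f(s_x)/d\) per incident edge. A secondary difficulty is checking that the container lemma's error terms are \(o(1)\) uniformly, using only \(\Delta_2(Q)=o(d)\).
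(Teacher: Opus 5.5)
Your outline follows the paper's proof. The weighted hard-core formula on $Q$ is \Cref{lem:full-blocks} without the factor $(1+\lambda)^n$, the containers are built on $Q$ layer by layer, and each container is bounded by its unconstrained weight $W(C)$.

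\textbf{Step 3.} The step you call the main obstacle is immediate. You need neither a per-edge LP nor convexity of $s\mapsto f(s)/s$.
\begin{itemize}
\item With $t=\lambda/(1+\lambda)$ you have $f(s)=-\log(1-t^s)$, which is decreasing in $s$.
\item Every class has $s_x\le r-1$, because every quotient edge contains at least two classes when $d\ge2$. A class of size $r$ would itself be an edge, and all $d$ edges through it would coincide. You wrote ``between $1$ and $r$''; if $s_x=r$ were allowed, your charging would only give the weaker $f(r)$.
\item Now charge, as you suggest, $f(s_x)/d$ from each $x\notin C$ to each of its $d$ edges. What matters is the number of quotient vertices outside $C$, not their weight, since each covers exactly $d$ edges whatever its size.
\item Suppose $C$ contains at most $\varepsilon m$ of the $m=nd/r$ quotient edges. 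Each of the other $(1-\varepsilon)m$ edges receives at least $f(r-1)/d$. Hence $\sum_{x\notin C}f(s_x)\ge(1-\varepsilon)\frac nr f(r-1)$, and
\[
  \frac1n\log W(C)\le\frac1r\log\Bigl((1+\lambda)\bigl((1+\lambda)^{r-1}-\lambda^{r-1}\bigr)\Bigr)+\frac{\varepsilon}{r}\,f(r-1).
\]
\end{itemize}
The quantity $f(r-1)$ is bounded for $\lambda\le L$. Moreover, $Z_{H_{r,d}}(\lambda)\ge\bigl((1+\lambda)((1+\lambda)^{r-1}-\lambda^{r-1})\bigr)^d$ holds exactly for every $d$: the difference is $(1+\lambda)^{(r-1)d}-\bigl((1+\lambda)^{r-1}-\lambda^{r-1}\bigr)^d\ge0$. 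So no asymptotic analysis of the two terms of $Z_{H_{r,d}}$ is required.

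\textbf{Step 2.} Drop the blow-up option. Blowing each $x$ back up to $s_x$ vertices reproduces $G$ itself, where twins have codegree $d$ — exactly what the quotient was meant to avoid.

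The layerwise route is the right one, but it needs one more step. The layers $Q_k$ are neither regular nor necessarily dense. The codegree condition for $Q_k$ is measured against its own average degree $\bar d_k$, which may be far below $d$. So $\Delta_2(Q)=o(d)$ alone does not verify the condition for every layer.

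The paper handles this in \Cref{lem:bounded-rank-containers}:
\begin{itemize}
\item It takes $\tau\asymp(\Delta_2(Q)/d)^{1/(r+1)}$ and applies the tight-container theorem only to layers with $m_k\ge\tau m$, so that $\bar d_k\ge2\tau d/r$.
\item It intersects one container from each such layer.
\item It absorbs the at most $(r-1)\tau m$ edges of the sparse layers into the error.
\end{itemize}
This gives $\log|\mathcal C|=o(n)$ and $e_Q(C)=o(m)$, which is all your Step 3 uses.
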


The condition is imposed on the quotient rather than on \(G\) itself because twins may have codegree \(d\), as they do in the conjectured extremal construction.  The proof uses the hypergraph container method \cite{balogh2015independent,saxton2015hypergraph}.

Observe that the twin quotient of every hypergraph with no cross-edges is linear. The quantitative estimate established in the proof of \Cref{thm:container-general} will therefore yield \Cref{thm:main_container}. This settles the asymptotic version of \Cref{conj:bbn_general} (ii) and (iii).

The local structure imposed by the absence of cross-edges allows us to obtain a substantially sharper error by the occupancy method \cite{davies2017independent}.

For \(0<\lam\leq1\), let
\[
    \Psi_{r,d}(\lam)=
    \begin{cases}
        \lam^{r-1}, & 0<\lam\leq d^{-1/(r-1)},\\
        \dfrac{1+\log(d\lam^{r-1})}{d}, & d^{-1/(r-1)}<\lam\leq1.
    \end{cases}
\]

\begin{theorem}\label{thm:cross-edge-free}
    Fix \(r\geq3\).  Let \(G\) be an \(r\)-uniform, \(d\)-regular hypergraph on \(n\) vertices with no cross-edges, where \(d\geq2\). Then for every \(0<\lam\leq1\),
    \[
        \frac{\log Z_G(\lam)}{n}
        \leq
        \frac{\log Z_{H_{r,d}}(\lam)}{rd}
        +O_r\big(\Psi_{r,d}(\lam)\big).
    \]
    In particular, taking \(\lam=1\),
    \[
        \frac{\log i_r(G)}{n}
        \leq
        1+\frac{\log\left(1-2^{-(r-1)}\right)}{r}
        +O_r\left(\frac{\log d}{d}\right).
    \]
\end{theorem}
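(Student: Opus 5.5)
We bound the occupancy fraction of \(G\) pointwise and then integrate using \eqref{eqn:occ_int}; the comparison with \(H_{r,d}\) is made at the level of the integrated quantities. By \Cref{prop:counterex} a uniform occupancy inequality against \(H_{r,d}\) cannot hold, so we do not aim for one. Instead we aim for an upper bound \(\alpha_G(\lam)\le F_{r,d}(\lam)+O_r(\lam\Psi'_{r,d}(\lam))\) with an explicit function \(F_{r,d}\), chosen so that integrating \(F_{r,d}(t)/t\) from \(0\) to \(\lam\) reproduces \(\log Z_{H_{r,d}}(\lam)/(rd)\) up to \(O_r(\Psi_{r,d}(\lam))\).

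\textbf{Local view.} Draw \(\mathbf I\) from the hard-core model and a uniform vertex \(v\). Since \(G\) has no cross-edges, its twin quotient is linear, and more strongly the edges through \(v\) meet only inside their common intersection. Two edges through \(v\) cannot be joined by a third edge through vertices outside their common core, because any such third edge would create a cross-edge. Hence the local picture around \(v\) consists of the twin class \(T\) of \(v\) together with \(d\) edges \(e_1,\dots,e_d\) through it. Conditioned on \(\mathbf I\) outside \(N[v]\), the spatial Markov property gives a product-like structure. Each edge \(e_i\) has \(r-|e_i\cap \text{core}|\) outer vertices, and the relevant external information per edge is only whether all outer vertices of \(e_i\) are occupied.

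Given the set \(S\subseteq\{e_1,\dots,e_d\}\) of edges whose outer part is fully occupied, the core must avoid completing any edge of \(S\). The conditional occupancy of \(v\) is then an explicit rational function of \(\lam\) and \(|S|\).

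\textbf{Linear programming.} As in \cite{davies2017independent,cohen2022number}, we express \(\alpha_G(\lam)\) in two ways. The first is the direct occupancy of \(v\). The second counts occupied vertices through outer positions of edges, using \(d\)-regularity and \(r\)-uniformity. Both expressions are expectations over the distribution of the local configuration \(S\), and equating them gives a linear constraint. Maximizing over all distributions of \(S\) subject to this constraint yields an upper bound on \(\alpha_G\).

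The maximizing distribution should be supported on two configurations, \(S=\emptyset\) and \(S\) of full size. This mimics \(H_{r,d}\), in which the marked side is either empty, or the parts avoid being full. The extremal value of this relaxed program should match the occupancy fraction of \(H_{r,d}\) up to \(O(\lam\Psi')\). The error arises because \(\Pr(|S|=k)\) for intermediate \(k\) is controlled only by a binomial-type concentration. Specifically, for \(\lam\le d^{-1/(r-1)}\) the expected size \(\E|S|\approx d\lam^{r-1}\) is \(O(1)\), giving error \(\lam^{r-1}\). For larger \(\lam\), a \(\log(d\lam^{r-1})/d\) loss comes from truncating the tail.

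\textbf{Integration and main obstacle.} Integrating \(\alpha_G(t)/t\) over \([0,\lam]\) and using the explicit formula for \(Z_{H_{r,d}}\) then gives the claim. At \(\lam=1\) this yields the \(O_r(\log d/d)\) bound via \eqref{eqn:Hrd_asympt}.

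The hardest step is solving the linear program in the non-linear setting, where cores of varying size are allowed. Twin classes of different sizes give different conditional occupancy functions, so the program gains a core-size parameter. We would first try to show that the objective is convex in \(|S|\) and monotone in core size, so that the optimum sits at the extreme points. If convexity fails, we would instead bound the contribution of intermediate \(|S|\) directly, using \(\lam\le1\) and a Chernoff estimate.
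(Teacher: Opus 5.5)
\paragraph{The gap: the choice of local configuration.} Your $S$, the set of petals that are fully occupied, is determined by $\mathbf I$ inside the neighbourhood of $v$, not by $\mathbf I$ outside it. So conditioning on $S$ does not leave an explicit local measure. The occupancy of $v$ given $S$ is explicit: it is $t$ if $S=\varnothing$ and $y_c$ otherwise. The petal occupancy given $S$, however, is not a function of $S$. A full petal contributes $q$, and for a non-full one the natural bound is $qy_q$, via the LYM argument of \Cref{lem:proper-downset}.

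With this relaxation your program is off by a constant. Take the case where all twin classes are singletons. Put mass $p=1/(1+\delta_s)$ on $S=\varnothing$ and mass $1-p$ on $S=\{e_1,\dots,e_d\}$. Your constraint $pt\le(1-p)+py_s$ then holds with equality. The objective is $t/(1+\delta_s)\ge t-t\delta_s>t-s\delta_s/r=(t+sy_s)/r$, since $t\le 1/2<s/r$. At $\lambda=1$, $r=3$ this gives $3/7$ against the required $7/18$, independently of $d$. Integrating therefore does not even give an $o(1)$ error, let alone $O_r(\log d/d)$.

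The missing idea is to condition on $J=\mathbf I\setminus U_B$ and call a petal active when the outside constraints forbid none of its subsets. With $k$ active petals, the class $B$ is full with the explicit probability $w_{c,k}$. Then $v$ has occupancy $y_c+(1-y_c)w_{c,k}$, an active petal has occupancy $q(t-w_{c,k}\delta_q)$, and only inactive petals need the LYM bound. In the configuration $k=d$ the neighbour occupancy is about $t$ rather than $1$. This is what makes the program tight up to $O(1/d)$.

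\paragraph{Two further points.} First, your account of the error terms relies on concentration of $|S|$. An argument that maximizes over all distributions of the configuration cannot use such concentration. In the paper, \Cref{lem:endpoint-local} shows that the maximum over $k$ of the local dual constraints is attained at $k\in\{0,d\}$.
\begin{itemize}
\item When $\chi_s=dt^s\ge 1$, the pointwise $O(1/d)$ error comes from bounds such as $w_{1,d}\le 2te^{-\chi_s}$.
\item When $\chi_s<1$, one simply uses $\alpha_G\le t$, at a cost of $O(t^s)$.
\item The factor $\log(d\lambda^{s})$ appears only when the pointwise $O(1/d)$ is integrated against $\mathrm d\lambda/\lambda$ above $d^{-1/s}$.
\end{itemize}

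Second, with unequal class sizes, counting through petals does not return $\alpha_G$ a second time. What holds is the weighted identity $\sum_B c_Bq_B\alpha_B^N=\sum_B c_Bq_B\alpha_B^v$. Passing back to $\alpha_G$ requires constraints tying class-size frequencies to the class-size partitions of edges. It also requires an inequality such as \Cref{lem:limiting-partition}, showing that the partition $1+s$ of $H_{r,d}$ is extremal. Your convexity and monotonicity plan does not address this coupling.
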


In fact, the proofs of \Cref{thm:main_container,thm:container-general,thm:cross-edge-free} give the smaller main term
\[
    \frac{1}{r}\log P_r(\lam),
    \qquad
    P_r(\lam)=(1+\lam)\big((1+\lam)^{r-1}-\lam^{r-1}\big).
\]
The stated bounds follow since \(Z_{H_{r,d}}(\lam)\geq P_r(\lam)^d\) for every \(\lam\geq0\).

After conditioning outside the neighborhood of a random vertex, the no-cross-edge assumption forces the incident edges to form a sunflower whose core is the twin class of the root.  The residual local picture is controlled by the size of this class and the number of active petals.  An explicit occupancy certificate gives an upper bound for the occupancy fraction, which integrates to the fugacity estimate in \Cref{thm:cross-edge-free}.

We also record a stronger coefficient-wise form of \Cref{conj:bbn}.
\begin{conjecture}\label{conj:coef}
    Let \(G\) be an \(r\)-uniform, \(d\)-regular hypergraph on \(n\) vertices, where \(rd\mid n\). We write \(i_r^{(k)}(G)\) for the number of independent sets of \(G\) of size \(k\). Then
    \[
        i_r^{(k)}(G)\leq i_r^{(k)}(H_{r,d}^n),
    \]
    where \(H_{r,d}^n\) is the disjoint union of \(n/(rd)\) copies of \(H_{r,d}\).
\end{conjecture}
For \(r=2\), this is precisely a conjecture of Kahn \cite{kahn2001entropy}, proved for all large enough graphs in \cite{davies2021proof, davies2018tight}.  In \Cref{sec:four-coefficient} we verify this conjecture for cross-edge-free triple systems in the first nontrivial coefficient case \(k=4\).

We next turn from asymptotic results to exact results for $2$-regular hypergraphs. In this setting the hypergraph problem has a useful dual form: vertices of the hypergraph become edges of a loopless \(r\)-regular multigraph, and complements of independent sets become edge covers.  This lets us replace the original problem by a weighted edge-cover extremal problem.

\begin{theorem}\label{thm:odd-r}
\Cref{conj:bbn} holds for \(d=2\) and every odd \(r\geq3\).
\end{theorem}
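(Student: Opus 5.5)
The plan is to pass to the dual edge-cover formulation and prove the bound there with an entropy decomposition.

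\textbf{The dual formulation.} Let \(G\) be \(r\)-uniform and \(2\)-regular on \(n\) vertices. Form the multigraph \(M\) whose vertices are the hyperedges of \(G\), with one edge for each vertex \(x\in V(G)\) joining the two hyperedges that contain \(x\). Then \(M\) is a loopless \(r\)-regular multigraph on \(m=2n/r\) vertices. A set \(I\subseteq V(G)\) is weakly independent exactly when \(V(G)\setminus I\) meets every hyperedge, that is, when \(V(G)\setminus I\) is an edge cover of \(M\). Hence \(i_r(G)\) equals the number \(\mathrm{ec}(M)\) of edge covers of \(M\).

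Applying the same dictionary to \(H_{r,2}\) gives a \(4\)-cycle \(C_r\) whose edge multiplicities alternate \(1,r-1,1,r-1\). The two marked vertices give the simple edges, and the two parts give the bundles of size \(r-1\). Since \(n/(2r)=m/4\), \Cref{conj:bbn} for \(d=2\) becomes
\[
\mathrm{ec}(M)\le \mathrm{ec}(C_r)^{m/4}\qquad\text{for every loopless } r\text{-regular multigraph } M .
\]

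\textbf{Reduction to a weighted simple graph.} I would group parallel edges into bundles. A bundle of multiplicity \(k\) contributes either the empty configuration (weight \(1\)) or a nonempty one (\(2^k-1\) choices), and only the nonempty case covers its endpoints. So \(\mathrm{ec}(M)\) is a weighted edge-cover count on the underlying simple graph, with edge weight \(w_k=2^k-1\).

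Following Zhao's strategy for independent sets, I would first try to show \(\mathrm{ec}(M)^2\le \mathrm{ec}(M\times K_2)\). The approach is an explicit injection on pairs of covers that swaps the two copies along alternating structures, which reduces the problem to bipartite \(M\). If this swapping fails for edge covers, I would instead run the entropy argument below directly, via Shearer's lemma over the vertex stars.

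\textbf{Entropy decomposition.} Let \(X\) be a uniformly random edge cover of bipartite \(M\) with sides \(A,B\). I would write
\[
H(X)=\sum_{v\in A} H(X_{\partial v}\mid \cdot),
\]
and condition on the covering status of the \(B\)-side. For each \(v\in A\), the event that a neighbour \(u\in B\) is already covered from elsewhere is recorded by an indicator, exactly as in Kahn's proof. This reduces everything to a local inequality at a single vertex \(v\). The vertex \(v\) carries a multiplicity profile \((k_1,\dots,k_s)\) with \(\sum k_i=r\), and we need an averaged (Hölder-type) combination of the local partition functions over its neighbours to be at most \(\mathrm{ec}(C_r)^{1/2}\). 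By symmetrizing over the two endpoints of each bundle, this becomes an inequality between explicit functions of the profiles at the two ends of each bundle. Equality should occur exactly for the profile \((1,r-1)\) paired with its mirror, which is the local structure of \(C_r\).

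\textbf{Main obstacle: the local inequality.} I expect the main difficulty to be verifying this local inequality over all profiles and all mixed neighbour profiles. Here I would first prove a log-concavity or majorization statement in the \(k_i\). The aim is to show that splitting a bundle, or moving multiplicity toward a balanced split, does not increase the relevant quantity, so that the extremal profiles are the unbalanced ones.

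This is where oddness of \(r\) should enter. For odd \(r\), no profile is symmetric under the involution \(k\mapsto r-k\) of a two-bundle vertex. So every vertex is strictly dominated by the \((1,r-1)\) configuration, and a finite computation in \(r\) closes the argument. For even \(r\), the balanced profile \((r/2,r/2)\), such as the doubled \(4\)-cycle when \(r=4\), is a genuine competitor that the averaging does not beat. I expect that to be why the case \((r,d)=(4,2)\) needs a separate direct treatment.
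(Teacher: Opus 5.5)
There is a genuine gap, and it sits in the non-bipartite dual multigraphs, which is exactly where the theorem is hard. Your only tool for them is the inequality \(\mathrm{ec}(M)^2\le\mathrm{ec}(M\times K_2)\), and you give no argument for it. Zhao's injection is designed for vertex-spin constraints. An edge cover instead imposes an OR-constraint on the edge variables at each vertex, and swapping along alternating structures does not obviously preserve it.

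Your fallback does not rescue this. A plain application of Shearer's lemma over vertex stars yields only \(\mathrm{ec}(M)\le(2^r-1)^{N/2}\), which is too weak because \((2^r-1)^2-T_r=2^r-2>0\), where \(T_r=\mathrm{ec}(C_r)\). Kahn's conditioning argument needs a bipartition, so it does not apply either.

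Moreover, the bipartite case carries no difficulty at all. The support of a bipartite \(r\)-regular multigraph has a perfect matching, and using its edges as two-vertex regions in \Cref{lem:entropy-decomposition,lem:two-vertex-region} gives the bound for \emph{every} \(r\). So your swapping step, if true, would settle \(d=2\) for all \(r\), including every even \(r\). The paper, by contrast, needs a computer-verified certificate just for \(r=4\). The whole content of the theorem is concentrated in the one step you leave unproved.

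Your account of where parity enters is also mistaken. No local profile is a competitor. For a support edge of multiplicity \(1\le m\le r-1\), the two-vertex defect \(T_r-\sum_\eta A_B(\eta)^2\) equals \(4(2^{m-1}-1)(2^{r-m-1}-1)\geq 0\). It is strictly positive in the balanced case: the doubled \(4\)-cycle has \(207<211=T_4\) edge covers.

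What does matter is the cap \(m\le r-1\), which your reduction drops. Stated for every loopless \(r\)-regular multigraph, your target is false. Two vertices joined by \(r\) parallel edges have \(2^r-1>T_r^{1/2}\) edge covers; this is the non-simple hypergraph with a repeated edge.

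In the paper, parity enters only globally:
\begin{enumerate}
  \item The support has the fractional perfect matching \(m_e/r\), so by half-integrality it has a spanning subgraph whose components are single edges and odd cycles.
  \item These components are the regions of a Gibbs--Shearer decomposition in which every boundary edge is shared by two regions.
  \item For odd \(r\), handshaking rules out an odd cycle forming a closed \(r\)-regular component, so every odd-cycle region has nonempty boundary.
  \item For such a region, \(\sum_\eta A_C(\eta)^2\) equals the edge-cover count of a doubled \(r\)-regular multigraph whose support has a perfect matching. That count is then bounded by two-vertex regions again.
\end{enumerate}
This regional decomposition is the missing idea that handles non-bipartite structure, and your proposal has no substitute for it.
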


The proof uses an entropy decomposition in the dual multigraph.  We partition the support graph into regions, condition on boundary edges, and use the Gibbs variational formula and Shearer's inequality to sum local free energies.  For odd \(r\), fractional matching theory reduces the required local estimates to single edges and odd cycles.

The first even case requires one additional ingredient, because a closed odd-cycle region can occur.

\begin{theorem}\label{thm:r4}
\Cref{conj:bbn} holds for \((r,d)=(4,2)\).
\end{theorem}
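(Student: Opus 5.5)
We work in the dual formulation. Let \(G\) be \(4\)-uniform and \(2\)-regular, and let \(M\) be the loopless \(4\)-regular multigraph whose vertices are the hyperedges of \(G\) and whose edges are the vertices of \(G\). A set \(I\) is weakly independent in \(G\) exactly when \(E(M)\setminus I\) is an edge cover of \(M\). So \(i_4(G)=\mathrm{ec}(M)\), the number of edge covers of \(M\). For \(H_{4,2}\) the dual is a \(4\)-cycle whose edge multiplicities are alternately \(1,3,1,3\), with \(\mathrm{ec}=2^6+3\cdot 7^2=211\). Since \(n=|E(M)|=2|V(M)|\), we must prove
\[
\log \mathrm{ec}(M)\le \tfrac14\,|V(M)|\log 211 .
\]

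\textbf{Step 1: reuse the odd-\(r\) decomposition.} I would run the same entropy decomposition as in the proof of \Cref{thm:odd-r}. Take \(X\) a uniformly random edge cover, so \(\log\mathrm{ec}(M)=H(X)\). Pass to the simple support graph \(S\) of \(M\) and give each support edge \(uv\) the weight \(x_{uv}=m(uv)/4\), where \(m(uv)\) is its multiplicity. Then \(x\) is a fractional perfect matching of \(S\). By the half-integrality of the fractional perfect matching polytope, \(x\) is a convex combination of basic solutions. Each basic solution is a disjoint union of single edges (weight \(1\)) and odd cycles (weight \(1/2\)).

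\textbf{Step 2: sum local free energies.} I would condition on boundary edges between regions, then apply Shearer's inequality with these convex weights together with the Gibbs variational formula. This bounds \(H(X)\) by a weighted sum of local free energies, one for each edge region and each odd-cycle region. The local estimates are then:
\begin{itemize}
\item for an edge region or an open odd-cycle region, with arbitrary boundary activities, the local free energy per unit weight is at most \(\tfrac14\log 211\);
\item these are exactly the estimates used for odd \(r\), and I would check that they go through verbatim for \(r=4\).
\end{itemize}

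\textbf{Step 3: the closed odd-cycle region.} This is the new case for even \(r\). When \(r=4\), an odd cycle in \(S\) can carry multiplicity \(2\) on every edge. Such a cycle is then a whole connected component of \(M\): a doubled odd cycle \(C_k^{(2)}\) with no boundary. The boundary-conditioning argument gives nothing here, so I would count these components exactly by a transfer matrix. Each doubled edge is either empty (weight \(1\)) or nonempty (weight \(3\)), and two consecutive edges cannot both be empty. This gives
\[
\mathrm{ec}\big(C_k^{(2)}\big)=\operatorname{tr}A^k,\qquad A=\begin{pmatrix}0&1\\3&3\end{pmatrix},
\]
with eigenvalues \(\lambda_\pm=(3\pm\sqrt{21})/2\). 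Since \(\lambda_+\approx3.79<211^{1/4}\approx3.81\) and \(|\lambda_-|<1\), we get \(\operatorname{tr}A^k\le\lambda_+^k+|\lambda_-|^k<211^{k/4}\) for all \(k\). For \(k=3\) this reads \(54<211^{3/4}\approx55.1\), and larger \(k\) only improves the margin. Because these are separate components and \(\mathrm{ec}\) is multiplicative over components, they can be removed before running the decomposition.

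\textbf{Main obstacle.} The delicate part is the remaining mixed case. Here an odd-cycle region of the decomposition is closed only partially: some cycle edges have multiplicity \(2\) and others belong to larger bundles. The local free energy must then be bounded uniformly over boundary conditions. I would first try to show the worst case is the fully closed doubled cycle already handled above, by a monotonicity argument in the boundary activities. Failing that, I would bound the transfer matrix with boundary-dependent entries directly by its largest eigenvalue.
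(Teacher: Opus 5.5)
\textbf{The gap.} Your proposal does not handle a closed odd-cycle component \emph{with chords}, and the remedies you propose for your ``main obstacle'' do not reach that case.

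The rest matches the paper. Single edges are handled correctly. Odd cycles with nonempty boundary are handled correctly: the doubling argument of \Cref{lem:open-cycle-region} does go through for $r=4$, since $b_v\le 2$. Closed \emph{chordless} odd cycles are handled correctly: the relations $m_{i-1}+m_i=4$ around an odd cycle force every multiplicity to be $2$, and your transfer-matrix bound is the paper's. (You should also record that simplicity of $G$ caps multiplicities at $3$; a $4$-fold edge alone has $15>\sqrt{211}$ edge covers.)

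The missing case is this: the chosen odd cycle $C$ spans a whole connected component of $M$, and $M[V(C)]$ contains further support edges between non-consecutive cycle vertices. This genuinely occurs. On $K_5$, the vector $x=\tfrac14\mathbf 1$ is the average of $\tfrac12\mathbf 1_{C_1}$ and $\tfrac12\mathbf 1_{C_2}$ for two edge-disjoint Hamiltonian $5$-cycles. So your convex decomposition may consist only of such regions.

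Your ``mixed case'', described as an odd-cycle region with boundary conditions, is just the open case you already handled. In the chorded closed case there is no boundary at all, and $\Phi_C=\log\mathrm{ec}(M[V(C)])$. There are no boundary activities to be monotone in, and arbitrary chords destroy the one-dimensional transfer-matrix structure. A further decomposition of the component is needed.

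\textbf{How the paper closes it, and a shorter alternative.} The paper chooses a root $o$ on a chord and matches the even path $C-o$ into consecutive pairs. It forms one region $R$ from $o$ and the (at most four) pairs containing support-neighbours of $o$, so that $o$ has no boundary. The remaining pairs become two-vertex regions. The bound $\Phi_R\le|R|\log 211/4$ is then verified by an exact computer enumeration (\Cref{lem:rooted-region-certificate}), with very little room to spare.

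A shorter repair appears to be available within your framework. Write $C=v_1v_2\cdots v_kv_1$ with a chord $v_1v_j$, where $3\le j\le k-1$.
\begin{itemize}
\item If $j$ is odd, take the odd cycle $v_1\cdots v_jv_1$ as a region and match the even path $v_{j+1}\cdots v_k$.
\item If $j$ is even, take the odd cycle $v_1v_jv_{j+1}\cdots v_kv_1$ as a region and match the even path $v_2\cdots v_{j-1}$.
\end{itemize}
The new odd cycle has nonempty boundary (the edge $v_jv_{j+1}$, respectively $v_1v_2$), and $b_v\le 2$ on it. So \Cref{lem:open-cycle-region,lem:two-vertex-region} apply. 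Equivalently, one can always choose the perfect $2$-matching so that every closed odd cycle is chordless. Either way, some such argument must be supplied before your proof is complete.
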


For \((r,d)=(4,2)\), closed chordless odd cycles are handled by a transfer-matrix calculation.  Closed odd cycles with chords are reduced to a finite family of rooted local configurations; the required local free-energy inequalities are then verified exactly.  The finite certificate is recorded in the appendix.

The proofs of \Cref{thm:odd-r,thm:r4} can be generalized from counting to the general partition function inequality for any $\lam\geq0$. Hence, in these cases, \Cref{conj:bbn_general} (ii) holds. We decided to restrict our presentation to $\lam=1$ for simplicity.

The remainder of the paper follows the three methods just described.  \Cref{sec:containers} proves the general asymptotic bound by the container method.  \Cref{sec:occupancy} sharpens the error in the cross-edge-free class by the occupancy method.  \Cref{sec:entropy} develops the edge-cover dual and proves \Cref{thm:odd-r}, while \Cref{sec:r4} proves \Cref{thm:r4} using the computational certificate given in the appendix.

\section{The container method: a general asymptotic bound}\label{sec:containers}

\subsection{Twin quotients}

For a vertex \(v\) of a hypergraph \(G\), write \(E_G(v)=\{e\in E(G):v\in e\}\). Thus two vertices are twins precisely when their sets \(E_G(v)\) are identical.  Let \(\mathcal B\) be the partition of \(V(G)\) into twin classes.  The twin quotient \(Q=Q(G)\) has vertex set \(\mathcal B\), and an edge \(\overline e=\{B\in\mathcal B:B\subseteq e\}\) for every \(e\in E(G)\). We attach to each quotient vertex \(B\) the positive integer weight \(c_B=|B|.\) We first record the structural properties of this quotient.

\begin{lemma}\label{lem:twin-quotient}
Let \(G\) be an \(r\)-uniform, \(d\)-regular hypergraph on \(n\) vertices, where \(d\geq2\), and let \(Q=Q(G)\).  Then the map \(e\mapsto\overline e\) is a bijection from \(E(G)\) to \(E(Q)\), every vertex of \(Q\) has degree \(d\), and \(\sum_{B\in\overline e}c_B=r\) for every \(\overline e\in E(Q)\).  Moreover, every edge of \(Q\) has size between \(2\) and \(r\), and \(|E(Q)|=nd/r\).
If \(G\) has no cross-edges, then \(Q\) is linear.
\end{lemma}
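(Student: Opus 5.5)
The plan is to verify each claim directly from the definition of twin classes. The basic observation is that each edge \(e\) is a union of twin classes. Indeed, if \(v\in e\) and \(w\) is a twin of \(v\), then \(E_G(w)=E_G(v)\ni e\), so \(w\in e\). Hence \(e=\bigsqcup_{B\in\overline e}B\). Two consequences follow at once:
\[
\sum_{B\in\overline e}c_B=|e|=r,
\]
and \(e\) is recovered from \(\overline e\) as the union of its blocks. The second shows that \(e\mapsto\overline e\) is injective. It is surjective onto \(E(Q)\) by definition, so it is a bijection and \(|E(Q)|=|E(G)|=nd/r\), using double counting \(r|E(G)|=nd\). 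For the degree, a block \(B\) lies in \(\overline e\) if and only if \(B\subseteq e\), and since \(e\) is a union of blocks this holds if and only if some (equivalently every) vertex \(v\in B\) lies in \(e\). Thus \(\deg_Q(B)=|E_G(v)|=d\).

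For the size bounds, \(|\overline e|\leq\sum_{B\in\overline e} c_B=r\), since each weight is at least \(1\). For the lower bound, suppose \(|\overline e|=1\). Then \(e\) is a single twin class \(B\), and every \(v\in e\) has \(E_G(v)=E_G(v')\) for all \(v'\in e\). Since \(d\geq 2\), pick another edge \(f\ni v\), \(f\neq e\). Every vertex of \(e\) is then in \(f\), so \(e\subseteq f\), and uniformity forces \(e=f\), contradicting simplicity. Note that \(d\geq2\) is used exactly here.

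For linearity under the no-cross-edge hypothesis, suppose two distinct quotient edges \(\overline e,\overline f\) share two distinct blocks \(B_1,B_2\). These are twin classes that are distinct, so pick \(u\in B_1\) and \(w\in B_2\); they are not twins, and without loss of generality some edge \(g\) contains \(u\) but not \(w\) (or the reverse). Then \(g\neq e,f\), since both \(e\) and \(f\) contain \(w\). We have \(u\in e\cap f\), \(u\in f\cap g\) and \(u\in g\cap e\), so we take all three witnesses equal to \(u\). The cross-edge definition then requires some witness outside \(e\cap f\cap g\), but \(u\in e\cap f\cap g\), so this choice fails and a different choice of witnesses is needed. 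Take instead the witness for \(e\cap f\) to be \(w\in e\cap f\), which is not in \(g\), and keep \(v=u\in f\cap g\) and the third witness \(u\in g\cap e\). Then \(w\notin e\cap f\cap g\), so \((e,f,g)\) is a cross-edge, which is a contradiction. If instead the separating edge contains \(w\) but not \(u\), the roles are symmetric.

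The main subtlety is this last step: choosing witnesses so that at least one lies outside the triple intersection. The nondistinct-witness clause in the definition is what makes it go through. The remaining step needing care is the lower bound \(|\overline e|\geq2\), which relies on simplicity and \(d\geq2\).
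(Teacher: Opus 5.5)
Your proof is correct and follows the paper's argument essentially step for step. You show that every edge is a union of twin classes, which gives injectivity, the weight identity and degree \(d\); you rule out a singleton quotient edge using a second edge through it; and for linearity you use exactly the paper's witnesses, a vertex \(w\) of the class missed by the separating edge \(g\) for \(e\cap f\) and a vertex \(u\) of the other class twice for \(f\cap g\) and \(g\cap e\). The abandoned first choice of witnesses can simply be deleted.
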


\begin{proof}
If \(B\in\mathcal B\) meets an edge \(e\), then every vertex of \(B\) belongs to \(e\), since the vertices of \(B\) have the same incident-edge set.  Thus every edge of \(G\) is a union of twin classes.  In particular, the quotient edge \(\overline e\) determines \(e\), so the map \(e\mapsto\overline e\) is injective and hence is a bijection by the definition of \(E(Q)\).

Every vertex in a class \(B\) belongs to the same \(d\) original edges, so \(B\) has degree \(d\) in \(Q\).  Since the classes in \(\overline e\) partition the original edge \(e\), we have \(\sum_{B\in\overline e}c_B=|e|=r\).  In particular, every quotient edge has size at most \(r\).  If some quotient edge consisted of a single class \(B\), then \(c_B=r\).  Every original edge incident to \(B\) would then be equal to \(B\), which is impossible in a simple hypergraph when \(d\geq2\).  Thus every quotient edge has size at least \(2\).  The identity \(|E(Q)|=nd/r\) follows from the corresponding identity for \(G\).

It remains to prove the last assertion.  Suppose that two distinct quotient edges \(\overline e,\overline f\) contain two distinct classes \(A,B\).  Since \(A\) and \(B\) are different twin classes, there is an edge \(g\) which contains one of them and not the other; without loss of generality, \(A\subseteq g\) and \(B\not\subseteq g\).  Choose \(a\in A\) and \(b\in B\).  Then the three distinct original edges \(e,f,g\) form a cross-edge, witnessed by \(b\in e\cap f\), \(a\in f\cap g\) and \(a\in g\cap e\), where \(b\notin e\cap f\cap g\).  This contradiction proves that \(Q\) is linear.
\end{proof}

The quotient also gives a convenient probabilistic representation of the partition function.  This representation does not require any codegree assumption.

\begin{lemma}\label{lem:full-blocks}
Let \(G\) and \(Q\) be as in \Cref{lem:twin-quotient}, fix \(\lam\geq0\), and put \(t=\lam/(1+\lam)\).  Let \(\mathbf F\subseteq V(Q)\) be the random set obtained by including the quotient vertices independently, with \(\P(B\in\mathbf F)=t^{c_B}\).  Then
\[
    Z_G(\lam)=(1+\lam)^n\P\big(\mathbf F\in\mathcal I(Q)\big).
\]
\end{lemma}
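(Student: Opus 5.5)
The plan is to view \(Z_G(\lam)\) as an expectation over a product measure on \(V(G)\), and then push that measure forward to the twin quotient. Dividing by \((1+\lam)^n\) gives
\[
    \frac{Z_G(\lam)}{(1+\lam)^n}=\sum_{I\in\mathcal I(G)} t^{|I|}(1-t)^{n-|I|}=\P\big(\mathbf S\in\mathcal I(G)\big),
\]
where \(\mathbf S\subseteq V(G)\) contains each vertex independently with probability \(t=\lam/(1+\lam)\). Indeed, \(\lam^{|I|}/(1+\lam)^n=t^{|I|}(1-t)^{n-|I|}\). The case \(\lam=0\) is trivial, since then \(t=0\) and both sides equal \(1\).

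Next, let \(\mathbf F\) be the set of twin classes \(B\in\mathcal B\) with \(B\subseteq\mathbf S\). The classes are disjoint and vertices are included independently, so the events \(\{B\subseteq\mathbf S\}\) are independent with probabilities \(t^{c_B}\). Hence \(\mathbf F\) has exactly the law described in the statement.

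It remains to show that \(\mathbf S\in\mathcal I(G)\) if and only if \(\mathbf F\in\mathcal I(Q)\). By \Cref{lem:twin-quotient}, every edge \(e\) is the disjoint union of the classes in \(\overline e\), and \(e\mapsto\overline e\) is a bijection onto \(E(Q)\). Therefore \(e\subseteq\mathbf S\) holds exactly when \(B\subseteq\mathbf S\) for all \(B\in\overline e\), that is, exactly when \(\overline e\subseteq\mathbf F\). So \(\mathbf S\) contains some edge of \(G\) if and only if \(\mathbf F\) contains some edge of \(Q\), which gives the equivalence and the identity.

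I do not expect a real obstacle. The only point needing care is the union-of-classes property of edges, which \Cref{lem:twin-quotient} already supplies.
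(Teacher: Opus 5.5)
Your proposal is correct and follows the paper's proof essentially step for step. You realize \(Z_G(\lam)/(1+\lam)^n\) as the probability that a \(t\)-random subset of \(V(G)\) is independent, define \(\mathbf F\) as the set of fully occupied twin classes, and use that every edge is the union of the classes in its quotient edge to get the equivalence of independence in \(G\) and in \(Q\).
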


\begin{proof}
Choose a random subset \(\mathbf X\subseteq V(G)\) by including every original vertex independently with probability \(t\).  For every \(A\subseteq V(G)\),
\[
    \P(\mathbf X=A)
    =t^{|A|}(1-t)^{n-|A|}
    =\frac{\lam^{|A|}}{(1+\lam)^n}.
\]
Consequently, \(\P\big(\mathbf X\in\mathcal I(G)\big)=Z_G(\lam)/(1+\lam)^n\).

Let \(\mathbf F\) be the set of twin classes which are contained in \(\mathbf X\) in their entirety.  Since the classes are disjoint, their full-occupation events are independent, and a class \(B\) is full with probability \(t^{c_B}\).  Every original edge is the union of the classes in its quotient edge.  It follows that \(\mathbf X\) contains an edge of \(G\) if and only if \(\mathbf F\) contains an edge of \(Q\).  Thus \(\P\big(\mathbf X\in\mathcal I(G)\big)=\P\big(\mathbf F\in\mathcal I(Q)\big)\), which proves the lemma.
\end{proof}

\subsection{A bounded-rank container lemma}

We use the tight-container form of the hypergraph container theorem of Saxton and Thomason \cite[Corollary 3.6]{saxton2015hypergraph}.  Since the twin quotient need not be uniform, we first derive the bounded-rank form needed here.

\begin{lemma}\label{lem:bounded-rank-containers}
Fix \(r\geq3\).  Let \(Q\) be a \(d\)-regular hypergraph on \(N\) vertices, all of whose edges have sizes in \(\{2,\ldots,r\}\), and put \(\xi=\Delta_2(Q)/d\).
For \(C\subseteq V(Q)\), write \(e_Q(C)\) for the number of edges of \(Q\) contained in \(C\).
There are constants \(C_r>0\) and \(\xi_r>0\) such that, whenever \(0<\xi\leq\xi_r\), there is a family \(\mathcal C\) of subsets of \(V(Q)\) with the following properties:
\begin{enumerate}
    \item every independent set of \(Q\) is contained in some \(C\in\mathcal C\);
    \item \(e_Q(C)\leq C_r\xi^{1/(r+1)}|E(Q)|\) for every \(C\in\mathcal C\);
    \item \(\log|\mathcal C|\leq C_rN\xi^{1/(r+1)}\log^2(1/\xi)\).
\end{enumerate}
\end{lemma}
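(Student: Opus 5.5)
We reduce to the uniform case and then apply the tight-container theorem of Saxton and Thomason \cite[Corollary 3.6]{saxton2015hypergraph} to one uniform layer at a time. For each \(k\in\{2,\dots,r\}\), let \(Q_k\) be the \(k\)-uniform hypergraph on \(V(Q)\) formed by the edges of \(Q\) of size \(k\). A set \(I\) is independent in \(Q\) exactly when it is independent in every \(Q_k\). So if \(\mathcal C_k\) is a container family for \(Q_k\), the family
\[
    \mathcal C=\Big\{C_2\cap\cdots\cap C_r: C_k\in\mathcal C_k\Big\}
\]
satisfies property (1). It also satisfies
\[
    \log|\mathcal C|\le\sum_k\log|\mathcal C_k|
\]
and
\[
    e_Q(C)\le\sum_k e_{Q_k}(C_k).
\]
The sizes of the layers differ, and some layers may be sparse. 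For this reason we do not require each \(e_{Q_k}(C_k)\) to be small relative to \(|E(Q_k)|\). Instead we bound it by \(\varepsilon\,|E(Q)|\), where \(|E(Q)|=Nd/r'\) up to a factor \(r\), because the average edge size is between \(2\) and \(r\).

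The Saxton--Thomason theorem requires codegree control for a \(k\)-graph with average degree \(\bar d_k\). The co-degree function is
\[
    \delta(Q_k,\tau)=2^{\binom{k}{2}-1}\sum_{j=2}^k 2^{-\binom{j-1}{2}}\frac{\sum_v d^{(j)}(v)}{\tau^{j-1}N\bar d_k},
\]
where \(d^{(j)}(v)\) is the maximum codegree of a \(j\)-set containing \(v\). Every \(j\)-set with \(j\ge2\) has codegree at most \(\Delta_2(Q)=\xi d\). To avoid dividing by a small \(\bar d_k\), we use the trick of normalising by \(d\) rather than by \(\bar d_k\). The cleanest way to do this is to pad each layer: add to \(Q_k\) a set of "dummy" edges on new vertices, or equivalently apply the theorem in its weighted/general-measure form with reference degree \(d\). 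The resulting bound is
\[
    \delta(Q_k,\tau)\lesssim_r \xi d/(\tau^{j-1} d)\le \xi/\tau^{k-1}.
\]
Alternatively, if \(\bar d_k\le \varepsilon d\), we simply take \(\mathcal C_k=\{V(Q)\}\). This costs at most \(\varepsilon |E(Q)|\) edges and no containers, so we only need to treat the layers with \(\bar d_k\ge\varepsilon d\). In those layers all codegrees are at most \(\xi d\le (\xi/\varepsilon)\bar d_k\).

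For such a layer, we choose \(\tau\) so that \(\delta(Q_k,\tau)\le \varepsilon_0(k)\) (a small constant). This requires \(\tau^{k-1}\approx \xi/\varepsilon\), that is, \(\tau\approx(\xi/\varepsilon)^{1/(k-1)}\). The corollary then gives containers \(C\) with \(e_{Q_k}(C)\le \varepsilon'|E(Q_k)|\) and
\[
    \log|\mathcal C_k|\lesssim N\tau\log(1/\tau)\log(1/\varepsilon').
\]
We then balance the parameters. We take \(\varepsilon=\varepsilon'=\xi^{1/(r+1)}\). This gives
\[
    \tau\le(\xi^{r/(r+1)})^{1/(k-1)}\le \xi^{1/(r+1)}
\]
for all \(k\le r\), since \(r/((r+1)(k-1))\ge 1/(r+1)\). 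Both logarithms are \(O_r(\log(1/\xi))\). Summing over the at most \(r-1\) layers yields (2) and (3) with a constant \(C_r\). The threshold \(\xi_r\) ensures \(\tau\le 1/2\) and the other smallness hypotheses of the corollary.

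The main obstacle is matching the exact hypotheses and conclusion of \cite[Corollary 3.6]{saxton2015hypergraph}. These include its requirement \(\delta(H,\tau)\le\varepsilon/12r!\), its form of the count bound (\(\log|\mathcal C|\le c\,N\tau\log(1/\varepsilon)\log(1/\tau)\)), and the fact that it bounds \(e(C)\) by \(\varepsilon e(H)\) rather than in terms of \(d\). The sparse-layer truncation at \(\bar d_k\ge \varepsilon d\) is what makes the codegree ratio \(\xi/\varepsilon\), rather than an uncontrolled quantity. I expect the exponent bookkeeping to work out with room to spare, given the generous exponent \(1/(r+1)\).
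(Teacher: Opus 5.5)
Your strategy is the same as the paper's. You split \(Q\) into its uniform layers \(Q_k\), give the sparse layers the trivial container \(V(Q)\), apply \cite[Corollary 3.6]{saxton2015hypergraph} to the remaining layers, and intersect. (The paper drops layers with \(m_k<\tau m\); your threshold \(\bar d_k\le\varepsilon d\) is the same up to constants.)

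However, the parameter step fails as written. The corollary requires \(\delta(Q_k,\tau)\le\varepsilon'/(12k!)\), where \(\varepsilon'\) is the edge fraction you want in (2). Making \(\delta(Q_k,\tau)\) a small constant \(\varepsilon_0(k)\) is not enough. With your choice \(\tau^{k-1}\approx\xi/\varepsilon\), your own bound \(\delta(Q_k,\tau)=O_r\big(\xi/(\varepsilon\tau^{k-1})\big)\) only gives \(\delta(Q_k,\tau)=O_r(1)\). The corollary then produces containers with \(e_{Q_k}(C)\) at most a fixed constant fraction of \(|E(Q_k)|\), so (2) does not follow. You name this hypothesis in your last paragraph but never feed it back into the computation.

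The repair stays inside your framework. The correct requirement is \(\tau^{k-1}\geq c_r\,\xi/(\varepsilon\varepsilon')=c_r\,\xi^{(r-1)/(r+1)}\). So you can take \(\tau=\big(c_r\,\xi^{(r-1)/(r+1)}\big)^{1/(k-1)}\), which is \(O_r(\xi^{1/(r+1)})\) precisely because \(k\le r\). Then \(\log|\mathcal C_k|=O_r\big(N\xi^{1/(r+1)}\log^2(1/\xi)\big)\) as needed.

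At \(k=r\) the exponents coincide, so contrary to your expectation there is no room to spare. The balance \(\varepsilon=\varepsilon'\approx\tau\) with \(\tau^{r+1}\approx\xi\) is exactly what dictates the exponent \(1/(r+1)\).

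The paper does this directly, with all three parameters (sparsity threshold, edge fraction and \(\tau\)) equal to \(\tau=A_r\xi^{1/(r+1)}\). Significance gives \(\bar d_k\ge2\tau d/r\), hence the cruder bound \(\delta(Q_k,\tau)\le B_r\xi/\tau^r\). Then \(A_r\) is chosen so that this is at most \(\tau/(12k!)\), after which \(\log|\mathcal C_k|=O_r(N\tau\log^2(1/\tau))\). The padding detour in your proposal is unnecessary.
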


\begin{proof}
For \(2\leq k\leq r\), let \(Q_k\) be the \(k\)-uniform layer of \(Q\), and put \(m_k=|E(Q_k)|\), \(m=|E(Q)|\). Since \(Q\) is \(d\)-regular, \(dN=\sum_{k=2}^r km_k\), and hence
\begin{equation}\label{eq:bounded-rank-edge-count}
    \frac{dN}{r}\leq m\leq\frac{dN}{2}.
\end{equation}

Choose a constant \(A_r>0\), to be fixed below, and set \(\tau=A_r\xi^{1/(r+1)}\). By decreasing \(\xi_r\), we may assume that \(\tau<1/2\).  Call a layer \(Q_k\) significant if \(m_k\geq\tau m\).  The average degree \(\overline d_k\) of a significant layer satisfies, by \eqref{eq:bounded-rank-edge-count},
\[
    \overline d_k=\frac{km_k}{N}
    \geq\frac{2\tau m}{N}
    \geq\frac{2\tau d}{r}.
\]

For \(2\leq j\leq k\) and \(v\in V(Q)\), let \(d_k^{(j)}(v)\) be the maximum number of edges of \(Q_k\) containing a \(j\)-set which contains \(v\), as in the definition of the Saxton--Thomason codegree function.  Every such \(j\)-set contains a pair, and therefore \(d_k^{(j)}(v)\leq\Delta_2(Q)=\xi d\).  It follows directly from the definition of the codegree function that \(\delta(Q_k,\tau)\leq B_r\xi/\tau^r\) for a constant \(B_r\) depending only on \(r\).  Since \(\tau^{r+1}=A_r^{r+1}\xi\), we may choose \(A_r\) sufficiently large that \(\delta(Q_k,\tau)\leq\tau/(12k!)\) for every significant layer and every \(2\leq k\leq r\).

Apply the tight-container theorem to each significant \(Q_k\), with both of its parameters equal to \(\tau\).  We obtain a family \(\mathcal C_k\) such that every independent set in \(Q_k\) is contained in some member of \(\mathcal C_k\), every \(C\in\mathcal C_k\) satisfies \(e_{Q_k}(C)\leq\tau m_k\), and \(\log|\mathcal C_k|\leq C'_rN\tau\log^2(1/\tau)\).

Let \(\mathcal C\) consist of all intersections of one member of \(\mathcal C_k\) for each significant layer \(Q_k\).  An independent set of \(Q\) is independent in every layer, and hence is contained in one of these intersections.  There are at most \(r-1\) significant layers, so \(\log|\mathcal C|\leq C''_rN\tau\log^2(1/\tau)\).  For every \(C\in\mathcal C\), the significant layers contribute at most \(\tau m\) internal edges in total, while the insignificant layers contribute fewer than \((r-1)\tau m\).  Therefore \(e_Q(C)<r\tau m\).  Substituting \(\tau=A_r\xi^{1/(r+1)}\) and adjusting the constant \(C_r\) proves the lemma.
\end{proof}

\subsection{Proof of the general bound}

\begin{proof}[Proof of \Cref{thm:container-general}]
Let \(Q=Q(G)\), put \(\xi=\Delta_2(Q)/d\), and write \(N=|V(Q)|\) and \(m=|E(Q)|\).  By assumption, \(\xi=o(1)\), so \Cref{lem:bounded-rank-containers} applies for all sufficiently large \(d\).  By \Cref{lem:twin-quotient}, \(m=nd/r\) and \(N\leq n\).  Apply \Cref{lem:bounded-rank-containers} to \(Q\).  Thus there is a container family \(\mathcal C\) and quantities \(\eps_1=O_r(\xi^{1/(r+1)}\log^2(1/\xi))\) and \(\eps_2=O_r(\xi^{1/(r+1)})\) such that \(\log|\mathcal C|\leq\eps_1 n\) and \(e_Q(C)\leq\eps_2 m\) for every \(C\in\mathcal C\).

Fix \(C\in\mathcal C\), and put \(D=V(Q)\setminus C\).  Every edge of \(Q\) which is not internal to \(C\) meets \(D\).  Since every quotient vertex has degree \(d\), we have \(d|D|\geq m-e_Q(C)\geq(1-\eps_2)m\), and therefore
\begin{equation}\label{eq:container-complement-size}
    |D|\geq(1-\eps_2)\frac{n}{r}.
\end{equation}

Fix \(0\leq\lam\leq L\) and put \(t=\lam/(1+\lam)\).  Let \(\mathbf F\) be the random set from \Cref{lem:full-blocks}.  Since every quotient weight satisfies \(c_B\leq r-1\), we have \(1-t^{c_B}\leq1-t^{r-1}\).  It follows from \eqref{eq:container-complement-size} that
\[
    \P(\mathbf F\subseteq C)
    =\prod_{B\in D}(1-t^{c_B})
    \leq
    (1-t^{r-1})^{(1-\eps_2)n/r}.
\]
Every independent set of \(Q\) is contained in a member of \(\mathcal C\).  A union bound and \Cref{lem:full-blocks} now give
\[
    Z_G(\lam)
    \leq
    (1+\lam)^n|\mathcal C|
    (1-t^{r-1})^{(1-\eps_2)n/r}.
\]
Taking logarithms and dividing by \(n\), we obtain
\begin{align*}
    \frac{\log Z_G(\lam)}{n}
    &\leq
    \log(1+\lam)+\frac1r\log(1-t^{r-1})\\
    &\quad
    +\eps_1+\frac{\eps_2}{r}\log\frac1{1-t^{r-1}}.
\end{align*}
The last logarithm is bounded uniformly for \(0\leq\lam\leq L\).  Finally,
\[
    \log(1+\lam)+\frac1r\log(1-t^{r-1})
    =
    \frac1r\log\left((1+\lam)
    \big((1+\lam)^{r-1}-\lam^{r-1}\big)\right).
\]
The bounds on \(\eps_1\) and \(\eps_2\) give the quantitative error \(O_{r,L}(\xi^{1/(r+1)}\log^2(1/\xi))\), which is \(o(1)\) since \(\xi=o(1)\).  This proves the fugacity estimate.  The counting estimate follows by taking \(\lam=1\) and using \(Z_G(1)=i_r(G)\).
\end{proof}

\begin{proof}[Proof of \Cref{thm:main_container}]
By the final assertion of \Cref{lem:twin-quotient}, the twin quotient \(Q(G)\) is linear.  Since every edge of \(Q(G)\) has size at least two, \(\Delta_2(Q(G))=1\).  Applying the quantitative estimate from the proof of \Cref{thm:container-general} with \(\xi=1/d\) proves the fugacity estimate.  Taking \(\lam=1\) and using \(\log\big(2(2^{r-1}-1)\big)/r=1+\log(1-2^{-(r-1)})/r\) gives the counting estimate for all sufficiently large \(d\).  For the finitely many remaining degrees, the same bounds follow after enlarging the implicit constants, using the trivial estimate \(Z_G(\lam)\leq(1+\lam)^n\).
\end{proof}

\section{The occupancy method: a sharper general bound}\label{sec:occupancy}

Our goal is to prove \Cref{thm:cross-edge-free} via a bound on the occupancy fraction. The main estimate of this section is the following bound.

\begin{theorem}\label{thm:occupancy}
    Fix \(r\geq3\), and let \(G\) be an \(r\)-uniform, \(d\)-regular hypergraph on \(n\) vertices with no cross-edges, where \(d\geq2\). Then for every \(0<\lam\leq1\),
    \[
        \alpha_G(\lam)
        \leq
        \frac{\lam}{r}\left(
        \frac{1}{1+\lam}
        +(r-1)
        \frac{(1+\lam)^{r-2}-\lam^{r-2}}
        {(1+\lam)^{r-1}-\lam^{r-1}}
        \right)
        +O_r\left(\min\left\{
        \left(\frac{\lam}{1+\lam}\right)^{r-1},\frac1d
        \right\}\right).
    \]
\end{theorem}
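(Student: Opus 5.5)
We would follow the occupancy method of Davies, Jenssen, Perkins and Roberts, in the form used by Cohen, Perkins, Sarantis and Tetali. Pick a uniformly random vertex \(v\) and a hard-core sample \(\mathbf I\), and condition on \(\mathbf I\setminus N[v]\), where \(N[v]\) is the union of the \(d\) edges through \(v\). By the absence of cross-edges, these \(d\) edges form a sunflower. Its core is exactly the twin class \(B\) of \(v\), with \(|B|=c\), since any two edges through \(v\) can meet only in \(B\): otherwise they would form a cross-edge with a third edge separating the extra common vertex from \(v\). There are then \(d\) pairwise disjoint petals \(P_1,\dots,P_d\), each of size \(r-c\geq1\).

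Again by cross-edge-freeness, no edge other than the \(e_i\) meets two different petals, nor meets a petal and the core. Hence the outside configuration acts only as a boundary condition. For each petal, the constraint left after conditioning is a set of forbidden sub-patterns \(\mathcal F_i\subseteq 2^{P_i}\) coming from edges meeting \(P_i\) and outside, plus the edge \(e_i\) itself. Conditionally, the law on \(B\cup P_1\cup\dots\cup P_d\) is therefore an explicit product-type model. The petals are independent given whether \(B\) is full, and \(e_i\) is violated only when \(B\) is full and \(P_i\) is full.

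Write \(\alpha_G=\frac1n\sum_v\P(v\in\mathbf I)\) and express everything through local observables. For each edge, count the expected number of occupied vertices, splitting them into core and petal contributions: \(\E|\mathbf I\cap e|=\E|\mathbf I\cap B|+\E|\mathbf I\cap P_i|\). Then average over \(v\) and over edges, using \(\sum_v\) of one quantity \(=\frac1d\sum_e\) of the other by double counting (regularity). The aim is to find an explicit convex combination of the local quantities that is bounded, uniformly over all boundary conditions and all \(c\), by the main term
\[
\frac{\lam}{r}\Big(\frac1{1+\lam}+(r-1)\frac{(1+\lam)^{r-2}-\lam^{r-2}}{(1+\lam)^{r-1}-\lam^{r-1}}\Big)+\text{error}.
\]
The main term is exactly the occupancy of the \(P_r\) model. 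Here one vertex is free, with occupancy \(\lam/(1+\lam)\). The other \(r-1\) vertices are conditioned not all occupied, with per-vertex occupancy \(\lam\big((1+\lam)^{r-2}-\lam^{r-2}\big)/\big((1+\lam)^{r-1}-\lam^{r-1}\big)\). This is the "one marked vertex plus a part" picture of \(H_{r,d}\) when the marked side is almost never fully occupied.

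The key local step is then the following. If \(k\) petals are "active", meaning their fully-occupied pattern is allowed by the boundary, then \(\P(B\text{ full})\) is at most roughly \(\lam^c(1-q)^k\)-weighted, where \(q\) is the probability that a petal is full. The core occupancy is therefore small unless \(k\) is small. Meanwhile the inactive petals carry occupancy at most that of the unconstrained product measure, and active petals are pushed toward the conditioned measure. A short calculus argument should show that the worst case is \(c=1\) with all petals active. In that case the core is empty except with probability \(\approx(1-t^{r-1})^{d}\)-type weights, which yields the \(O(1/d)\) error after optimizing the linear combination. The alternative error \(O(t^{r-1})\), with \(t=\lam/(1+\lam)\), comes from the trivial comparison: every edge is fully occupied with probability at most \(t^{r}\)-type quantities, so \(\alpha_G\leq t\) and the main term equals \(t-O(t^{r-1})\). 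This handles small \(\lam\).

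The main obstacle is the uniform verification over boundary conditions. A petal's forbidden family can be an arbitrary down-closed-complement pattern, so one has to show that the per-petal occupancy functional is maximized by either the free or the fully conditioned petal. We would first try to prove this via monotonicity: the conditional measure on a petal is the product measure conditioned on an up-set being avoided, and FKG/Harris gives the needed comparisons. Everything would then reduce to a one-parameter inequality in \(k\), to be checked for the certificate.
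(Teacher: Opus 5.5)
\textbf{There is a genuine gap: the bound you use for inactive petals is too weak, and FKG/Harris cannot give the one that is needed.}

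Your setup is sound up to the local estimate. The sunflower structure, the factorization of the conditional law over petals, and the double count \(\alpha_G=\E\bigl[\frac{1}{rd}\sum_{e\ni\mathbf v}|\mathbf I\cap e|\bigr]\) are all fine. After conditioning on \(\mathbf I\setminus(B\cup P_1\cup\cdots\cup P_d)\), this leaves the local quantity \(\frac1r\bigl(c\,\alpha^v+q\,\alpha^N\bigr)\) with \(q=r-c\).

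You bound an inactive petal by the unconstrained product measure, i.e.\ by \(qt\) with \(t=\lambda/(1+\lambda)\). That cannot give the main term. Consider \((c,k)=(1,0)\), which is what a marked vertex of \(H_{r,d}\) typically sees: some other marked vertex is occupied, so every petal already has its full pattern forbidden. There \(v\) is unconstrained, so \(\alpha^v=t\) exactly. With only \(\alpha^N\leq t\), any combination \((1-a)\alpha^v+a\alpha^N\) is bounded only by \(t\). This exceeds the main term \(t-(r-1)\delta_{r-1}/r\) by \(\Theta_r(t^{r-1})\), which is a constant at \(\lambda=1\) and not \(O(1/d)\) once \(dt^{r-1}\) is large.

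What you need is \Cref{lem:proper-downset}: every proper downset \(\mathcal F\subsetneq2^{P_i}\) has \(\E|\mathbf A|\leq q\,y_q\), the value for the ``not full'' downset. Harris only compares a conditioned petal with the free one. Moreover, the required comparison is not a stochastic-domination statement. For \(q=2\) and \(\mathcal F=\{\varnothing,\{1\}\}\), the marginal of vertex \(1\) is \(t>y_2\), even though \(\E|\mathbf A|=t\leq2y_2\). The paper instead uses the local LYM inequality. The density \(f_j=|\mathcal F\cap\binom{P_i}{j}|/\binom qj\) is nonincreasing with \(f_q=0\). So the rank law of \(\mathbf A\) is the ``not full'' rank law reweighted by a nonincreasing sequence, which can only lower the mean. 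Note also that it is the inactive petal, not the active one, that must be pushed down to \(qy_q\). An active petal has occupancy \(q(t-w_{c,k}\delta_q)\), which is close to \(qt\) whenever the core is rarely full.

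Your identification of the worst case is also off. With your edge-average weights, \((c,k)=(1,d)\) has value about \((r-1)t/r\), strictly below the main term. The binding configurations are \((1,0)\) and \((r-1,d)\), the two vertex types of \(H_{r,d}\).

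Once the LYM bound is supplied, your route does close, and it gives a different certificate from the paper's. The paper sets up a linear program with \(a_q=a_{r-1}q/(r-1)\) and \(a_{r-1}\approx t/(t+\delta_{r-1})>(r-1)/r\). Its class value \(\Lambda_{r-1}\) then exceeds the main term, so it needs the partition inequality of \Cref{lem:limiting-partition} to average along edges.

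Your weights \(a_q=q/r\) avoid that step. The endpoint argument of \Cref{lem:endpoint-local} works for any weights in \((0,1)\), and gives \(L_{c,0}=t-q\delta_q/r\) and \(L_{c,d}=t-c\delta_c/r+w_{c,d}\bigl(c(1-y_c)-q\delta_q\bigr)/r\). Since \(j\delta_j\) is nonincreasing in \(j\), both are at most the main term up to the \(w_{c,d}\) term. When \(dt^{r-1}\geq2\), that term is \(O_r(1/d)\) for \(c\geq2\), as in the paper. For \(c=1\) it is absorbed by the slack \((t-(r-1)\delta_{r-1})/r\). When \(dt^{r-1}<2\), the trivial bound \(\alpha_G\leq t\) suffices.

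So uniformity in \(c\) is attainable. However, your proposal contains none of these steps: the specific weights, the reduction to \(k\in\{0,d\}\), and the monotonicity of \(j\delta_j\). All of them rest on the inactive-petal bound that you have not proved.
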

The implicit constant depends only on \(r\), uniformly in
\(G,d\) and \(\lam\).

Integrating this estimate gives the fugacity bound in \Cref{thm:cross-edge-free}.  We will also use this bound to record a coefficient consequence.
\begin{corollary}[Coefficient bound]\label{cor:coefficient_ld}
    Let \(i_r^{(k)}(G)\) be the number of independent sets of size \(k\), and use the convention \(\log 0=-\infty\). Under the same hypotheses, there is a constant \(C_r\) such that for every \(0\leq k\leq n\),
    \[
        \frac{\log i_r^{(k)}(G)}{n}
        \leq
        \inf_{0<\lam\leq1}
        \left\{
        \frac{1}{r}\log\left((1+\lam)
        \big((1+\lam)^{r-1}-\lam^{r-1}\big)\right)
        -\frac{k}{n}\log \lam
        +C_r\Psi_{r,d}(\lam)
        \right\}.
    \]
\end{corollary}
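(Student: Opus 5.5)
The coefficient bound will follow from the fugacity estimate of \Cref{thm:cross-edge-free} in its sharper form with main term \(\frac1r\log P_r(\lam)\), combined with the standard Chernoff-type (Markov) inequality for partition functions. The key step is the trivial observation that for every \(\lam>0\) and every \(k\),
\[
    i_r^{(k)}(G)\lam^k\leq\sum_{j}i_r^{(j)}(G)\lam^j=Z_G(\lam),
\]
so that \(\log i_r^{(k)}(G)\leq\log Z_G(\lam)-k\log\lam\). When \(i_r^{(k)}(G)=0\), the bound holds trivially under the convention \(\log0=-\infty\).

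The first step is to record the integrated form of \Cref{thm:occupancy}. We write the main term of the occupancy bound as \(\lam\,\frac{d}{d\lam}\frac1r\log P_r(\lam)\). This is a direct computation: \(\frac{d}{d\lam}\log(1+\lam)=1/(1+\lam)\), and
\[
    \frac{d}{d\lam}\log\big((1+\lam)^{r-1}-\lam^{r-1}\big)=(r-1)\frac{(1+\lam)^{r-2}-\lam^{r-2}}{(1+\lam)^{r-1}-\lam^{r-1}}.
\]
Integrating \(\alpha_G(s)/s\) from \(0\) to \(\lam\) via \eqref{eqn:occ_int} gives
\[
    \frac{\log Z_G(\lam)}{n}\leq\frac1r\log P_r(\lam)+O_r\Big(\int_0^\lam\frac1s\min\{s^{r-1},1/d\}\,ds\Big),
\]
using \(P_r(0)=1\) and \((s/(1+s))^{r-1}\leq s^{r-1}\).

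The second step is to check that this error integral is \(O_r(\Psi_{r,d}(\lam))\). For \(\lam\leq d^{-1/(r-1)}\), the integral equals \(\lam^{r-1}/(r-1)\). For larger \(\lam\), we split at \(s_0=d^{-1/(r-1)}\) and obtain
\[
    \frac{1}{(r-1)d}+\frac1d\log(\lam/s_0)=\frac{1}{(r-1)d}+\frac{\log(d\lam^{r-1})}{(r-1)d},
\]
which is at most \(\Psi_{r,d}(\lam)\) up to constants. This step is presumably exactly how \Cref{thm:cross-edge-free} is derived, so it can be cited if that theorem is proved with the \(P_r\) main term.

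Finally, we divide the Markov inequality by \(n\), insert the bound from the first two steps, and take the infimum over \(0<\lam\leq1\), with \(C_r\) the implicit constant. No step is a real obstacle. The only care needed is to use the sharper \(P_r\) main term rather than \(Z_{H_{r,d}}\), and to make sure the \(O_r\) constant is uniform in \(\lam\), which \Cref{thm:occupancy} asserts.
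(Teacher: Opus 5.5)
Your proposal is correct and is essentially the paper's proof: bound \(i_r^{(k)}(G)\lam^k\leq Z_G(\lam)\), insert the fugacity estimate with main term \(\frac1r\log P_r(\lam)\) and error \(O_r(\Psi_{r,d}(\lam))\), and take the infimum over \(0<\lam\leq1\). The only difference is that you re-derive that estimate by integrating \Cref{thm:occupancy}, exactly as the paper does when proving \Cref{thm:cross-edge-free}, instead of citing it; your insistence on the \(P_r\) main term is sound but not essential, since with \(t=\lam/(1+\lam)\) the gap \(\frac{\log Z_{H_{r,d}}(\lam)}{rd}-\frac1r\log P_r(\lam)=\frac1{rd}\log\bigl(1+(1+\lam)^{-d}\bigl((1-t^{r-1})^{-d}-1\bigr)\bigr)\) is itself \(O_r(\Psi_{r,d}(\lam))\).
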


Before turning to the local occupancy argument, we prove the promised small coefficient case of \Cref{conj:coef}.

\begin{proposition}\label{prop:four-coefficient}
    Let \(G\) be a \(3\)-uniform, \(d\)-regular hypergraph on \(n\) vertices with no cross-edges, where \(d\geq1\) and \(3d\mid n\). Then \(i_3^{(4)}(G)\leq i_3^{(4)}(H_{3,d}^n)\).
\end{proposition}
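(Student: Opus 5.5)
The plan is to reduce \(i_3^{(4)}\) to a count of pairs of edges sharing two vertices, and then bound that count using the twin quotient of \Cref{lem:twin-quotient}.

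\emph{Reduction.} Write \(m=nd/3\) for the number of edges, and for a \(4\)-set \(S\) let \(e(S)\) be the number of edges of \(G\) contained in \(S\). Every edge lies in exactly \(n-3\) four-sets, so \(\sum_S e(S)=m(n-3)\). Hence
\[
    i_3^{(4)}(G)=\binom n4-\#\{S:e(S)\geq1\}=\binom n4-m(n-3)+X(G),
    \qquad X(G)=\sum_{S}\big(e(S)-1\big)^+ .
\]
The first two terms depend only on \(n\) and \(d\), and the extremal hypergraph \(H^n_{3,d}\) has the same \(n\) and \(d\). So it suffices to show \(X(G)\leq X(H^n_{3,d})\).

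\emph{Computing \(X\) in both cases.} First, in a cross-edge-free triple system we always have \(e(S)\leq 2\). Indeed, three triples inside a \(4\)-set, say \(\{1,2,3\},\{1,2,4\},\{1,3,4\}\) up to relabelling, have triple intersection \(\{1\}\). The vertex \(2\) lies in the first two triples but not in all three, so these three edges form a cross-edge. Consequently \(X(G)\) equals the number of unordered pairs of edges meeting in exactly two vertices. Distinct such pairs give distinct unions, since a repeated union would force \(e(S)\geq3\).

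Next, in \(H_{3,d}\) two edges share two vertices exactly when they use the same part \(P_j\). Each such pair contributes one \(4\)-set, so one copy contributes \(d\binom d2\). Summing over the \(n/(3d)\) copies gives
\[
    X(H^n_{3,d})=\frac n3\binom d2 .
\]

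\emph{Bounding \(X(G)\) via the twin quotient.} The case \(d=1\) is trivial, since then \(X(G)=0\); so assume \(d\geq2\) and apply \Cref{lem:twin-quotient}. Then \(Q(G)\) is linear, and its quotient edges have class sizes summing to \(3\) with at least two classes, so each is of type \((1,1,1)\) or \((2,1)\).

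Suppose two distinct edges \(e,f\) share two original vertices \(u,v\). By linearity of \(Q\), the quotient edges \(\overline e,\overline f\) share at most one class, so \(u,v\) lie in a common class \(B\) with \(c_B=2\). Conversely, any two of the \(d\) edges through a class of size \(2\) share exactly that class. Therefore
\[
    X(G)=b\binom d2,
\]
where \(b\) is the number of twin classes of size \(2\).

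Finally, each size-\(2\) class lies in \(d\) edges, and each edge (being of type \((2,1)\) or \((1,1,1)\)) contains at most one size-\(2\) class. Hence \(bd\leq m=nd/3\), i.e. \(b\leq n/3\), which gives \(X(G)\leq \frac n3\binom d2\), as required.

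The only step needing real care is the claim \(e(S)\leq2\), i.e. checking the cross-edge definition on the triple-in-a-\(4\)-set configuration; everything else follows directly from the structure of the twin quotient.
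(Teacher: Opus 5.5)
Your proof is correct. The first half matches the paper: the inclusion--exclusion reduction of $i_3^{(4)}$ to the number of pairs of edges meeting in two vertices, with a cross-edge ruling out three edges inside a $4$-set.

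The bounding step is where you diverge. The paper stays local. It shows directly from the cross-edge condition that each hyperedge $\{a,b,c\}$ contains at most one pair of codegree at least $2$, since otherwise $e,f,g$ form a cross-edge witnessed by $b,a,c$. This gives $\sum_{d_G(x,y)\ge2}d_G(x,y)\le nd/3$, and the paper then uses only the crude bound $\binom{d_G(x,y)}{2}\le\frac{d-1}{2}d_G(x,y)$, which comes from $d_G(x,y)\le d$.

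You instead invoke linearity of the twin quotient (\Cref{lem:twin-quotient}). This yields the stronger structural fact that every pair of codegree at least $2$ is a twin class of size $2$, and so has codegree exactly $d$. What your route buys is the exact identity $X(G)=b\binom d2$, which makes the equality case transparent: for $d\ge2$, equality holds iff every quotient edge has type $(2,1)$. The cost is depending on the earlier lemma, which is where the separate $d=1$ case comes from. The paper's argument is self-contained and needs no case split.

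One small point: when computing $X(H^n_{3,d})$, say explicitly that each $4$-set $P_j\cup\{u_i,u_{i'}\}$ contains no third edge, so each such pair contributes exactly one to $X$. Alternatively, observe that $H_{3,d}$ is itself cross-edge-free, so your general argument that $e(S)\le2$ applies to it.
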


\subsection{A coefficient warm-up}\label{sec:four-coefficient}

For a hypergraph \(G=(V,E)\), its incidence graph is defined as a bipartite graph \(\Gamma_G\), with parts \(V\) and \(E\), and an edge \((v,e)\in V\times E\) if \(v\in e\) in \(G\). We are now ready to prove \Cref{prop:four-coefficient}.
\begin{proof}
Let \(G\) be a \(3\)-uniform \(d\)-regular hypergraph on \(n\) vertices with no cross-edges. We first express \(i_3^{(4)}(G)\) in terms of pairs of edges which share two vertices.

Starting with all \(4\)-sets, subtract those containing a prescribed edge. There are \(nd/3\) edges and, after choosing one of them, \(n-3\) choices for the fourth vertex. This gives \(\binom{n}{4}-nd(n-3)/3\).
The only overcount comes from a \(4\)-set containing two distinct hyperedges. Since the hypergraph is \(3\)-uniform, this happens exactly when the two hyperedges have union of size \(4\), equivalently when they share two vertices. Moreover, no \(4\)-set can contain three hyperedges: for three distinct \(3\)-subsets of a \(4\)-set, each pairwise intersection contains an element omitted by the third triple, and choosing these three elements gives witnesses for a cross-edge. Hence
\[
    i_3^{(4)}(G)
    =
    \binom{n}{4}-\frac{nd(n-3)}{3}
    +\#\{ \{e,f\}: e\neq f,\ |e\cap f|=2\}.
\]

Let \(\Gamma_G\) be the incidence graph of \(G\). A pair of hyperedges \(e,f\) with \(|e\cap f|=2\), say \(e\cap f=\{a,b\}\), gives the \(4\)-cycle \(a-e-b-f-a\) in \(\Gamma_G\). Conversely, every \(4\)-cycle in \(\Gamma_G\) consists of two vertices of \(G\) and two hyperedges containing both of them, and therefore gives such a pair of hyperedges. Thus the last term is exactly the number of \(C_4\)'s in \(\Gamma_G\).

It remains to prove that \(\# C_4(\Gamma_G)\leq nd(d-1)/6\).
Recall that \(d_G(x,y)\) denotes the codegree of a pair of distinct vertices \(x,y\in V(G)\), so that \(\# C_4(\Gamma_G)=\sum_{\{x,y\}}\binom{d_G(x,y)}{2}\).
We claim that, for every hyperedge \(e=\{a,b,c\}\), at most one of the three pairs
\(\{a,b\},\{a,c\},\{b,c\}\) has codegree at least \(2\). Indeed, if for instance
there were hyperedges \(f\neq e\) containing \(\{a,b\}\) and \(g\neq e\) containing
\(\{a,c\}\), then \(e,f,g\) would be distinct and would form a cross-edge, witnessed
by \(b\in e\cap f\), \(a\in f\cap g\) and \(c\in g\cap e\).
Thus each hyperedge contains at most one pair \(\{x,y\}\) with \(d_G(x,y)\geq2\), and so \(\sum_{\{x,y\}:d_G(x,y)\geq2}d_G(x,y)\leq |E(G)|=nd/3\).
Since \(d_G(x,y)\leq d\), we obtain
\[
    \# C_4(\Gamma_G)
    =
    \sum_{\{x,y\}:d_G(x,y)\geq2}\binom{d_G(x,y)}{2}
    \leq
    \frac{d-1}{2}\sum_{\{x,y\}:d_G(x,y)\geq2}d_G(x,y)
    \leq
    \frac{nd(d-1)}{6}.
\]

For one copy of \(H_{3,d}\), the only pairs of hyperedges meeting in two vertices are obtained by fixing one of the \(d\) non-marked pairs and choosing two of the \(d\) marked vertices. Thus it has \(d\binom{d}{2}=d^2(d-1)/2\) such pairs. Moreover, a \(4\)-set containing two hyperedges in \(H_{3,d}\) consists of one non-marked pair together with two marked vertices, and therefore contains exactly those two hyperedges. The disjoint union \(H_{3,d}^n\) has \(n/(3d)\) copies, hence exactly \(nd(d-1)/6\) such pairs. Therefore
\[
    i_3^{(4)}(H_{3,d}^n)
    =
    \binom{n}{4}-\frac{nd(n-3)}{3}+\frac{nd(d-1)}{6}.
\]
Substituting the bound above into the displayed formula for \(i_3^{(4)}(G)\) gives \(i_3^{(4)}(G)\leq i_3^{(4)}(H_{3,d}^n)\), as required.

\end{proof}

\subsection{The local linear program}
\noindent Let \(G\) be as in \Cref{thm:occupancy}.  Fix
\(0<\lam\leq1\), draw \(\mathbf I\) from the hard-core model, and
independently choose a uniformly random vertex \(\mathbf v\).  Throughout
this section put \(t=\lam/(1+\lam)\) and \(s=r-1\),
and, for \(1\leq j\leq s\), put
\[
    y_j=\frac{t-t^j}{1-t^j},
    \qquad
    \delta_j=t-y_j=\frac{t^j(1-t)}{1-t^j},
    \qquad
    \chi_j=dt^j.
\]
Thus \(y_j\) is the occupation probability of a specified vertex in a
\(j\)-set conditioned not to occupy the whole set.

We first record the local structure forced by the absence of cross-edges.

\begin{lemma}\label{lem:general-local-structure}
Let \(v\) belong to a twin class \(B\) of size
\(1\leq c\leq r-1\), and put \(q=r-c\).
Then the \(d\) edges containing \(v\) have the form
\(B\cup P_1,\ldots,B\cup P_d\) with \(|P_i|=q\),
where the petals \(P_i\) are pairwise disjoint.  Moreover, an edge not
containing \(v\) meets at most one of the petals.
\end{lemma}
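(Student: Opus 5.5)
The plan is to reduce everything to the twin quotient and use \Cref{lem:twin-quotient}, whose last assertion says that \(Q=Q(G)\) is linear when \(G\) has no cross-edges.

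\emph{Step 1: the sunflower form.} Let \(e_1,\ldots,e_d\) be the edges containing \(v\). Every edge is a union of twin classes, so \(B\subseteq e_i\) for each \(i\). Write \(P_i=e_i\setminus B\), so that \(|P_i|=r-c=q\). Note that \(q\geq1\): if \(c=r\), then \(e_i=B\) for every \(i\), which contradicts simplicity when \(d\geq2\), exactly as in the proof of \Cref{lem:twin-quotient}.

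\emph{Step 2: disjointness of petals.} Suppose \(x\in P_i\cap P_j\) with \(i\neq j\), and let \(A\) be the twin class of \(x\). Then \(A\neq B\), since \(x\notin B\). The quotient edges \(\overline{e_i}\) and \(\overline{e_j}\) both contain the two classes \(A\) and \(B\). They are distinct quotient edges because \(e\mapsto\overline e\) is a bijection. This contradicts the linearity of \(Q\).

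\emph{Step 3: an outside edge meets at most one petal.} This is the step requiring a direct cross-edge argument rather than an appeal to linearity. Let \(g\) be an edge with \(v\notin g\), so \(g\neq e_i\) for all \(i\). Suppose \(g\) meets \(P_i\) at \(x\) and \(P_j\) at \(y\), with \(i\neq j\). I take the triple \(e_i,e_j,g\), which consists of distinct edges, with the following witnesses:
\begin{itemize}
\item \(v\in e_i\cap e_j\);
\item \(y\in e_j\cap g\);
\item \(x\in g\cap e_i\).
\end{itemize}
Since \(v\notin g\), the witness \(v\) does not lie in \(e_i\cap e_j\cap g\). Hence this triple is a cross-edge, a contradiction.

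A quotient-level alternative also works. Because \(v\notin g\), we have \(B\not\subseteq g\), so the classes of \(x\) and \(y\) together with \(B\) give a Berge triangle \(\overline{e_i},\overline{e_j},\overline g\) in \(Q\). The direct witness argument above is cleaner, so I will use it.

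None of these steps looks like a genuine obstacle. The only care needed is to ensure the three edges in each triple are distinct and that at least one witness lies outside the triple intersection; in every case this is supplied by \(v\notin g\) or by \(x\notin B\).
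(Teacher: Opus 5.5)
Your proof is correct and follows the paper's argument: the outside-edge step is exactly the paper's cross-edge \(e_i,e_j,g\), witnessed by \(x\), \(y\) and \(v\), with \(v\notin g\). For disjointness of the petals you invoke the linearity of \(Q(G)\) from \Cref{lem:twin-quotient}, whereas the paper argues directly that a vertex lying in two edges through \(v\) must lie in every edge through \(v\) (otherwise a cross-edge witnessed by \(x,v,v\) appears) and is therefore a twin of \(v\); since the linearity proof is itself this kind of cross-edge argument, the two routes are essentially the same.
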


\begin{proof}
Suppose two edges containing \(v\) also contain a vertex \(x\neq v\).
If a third edge containing \(v\) did not contain \(x\), these three edges
would form a cross-edge, witnessed by \(x,v,v\).  Hence \(x\) belongs to
every edge containing \(v\).  Since \(x\) and \(v\) both have degree \(d\),
they are twins.  It follows that the common intersection of the edges
through \(v\) is precisely \(B\), and their complements \(P_i\) are
pairwise disjoint.

If an outside edge \(h\) met two distinct petals \(P_i\) and \(P_j\), then
\(B\cup P_i,h,B\cup P_j\) would form a cross-edge, witnessed by one point
from each of \(h\cap P_i,h\cap P_j\), together with \(v\).  This proves the
last assertion.
\end{proof}

In particular, the neighborhood of \(v\) is
\(N_v=(B\setminus\{v\})\sqcup P_1\sqcup\cdots\sqcup P_d\), of size
\(|N_v|=c-1+dq\).
Fix a vertex \(v\in B\), put \(U_B=B\sqcup P_1\sqcup\cdots\sqcup P_d\) and
\(J=\mathbf I\setminus U_B\), and expose \(J\).  By \Cref{lem:general-local-structure}, an edge avoiding
\(v\) is disjoint from \(B\) and meets at most one petal.  Hence the
residual constraints factor over the petals.  For each \(i\), let
\(\mathcal F_i(J)\subseteq2^{P_i}\) consist of the sets \(A\subseteq P_i\)
such that \(f\cap P_i\nsubseteq A\)
for every edge \(f\) avoiding \(v\) with
\(f\setminus U_B\subseteq J\) and \(f\cap P_i\neq\varnothing\).
This is a nonempty downset.  We call \(P_i\) \emph{active} if
\(\mathcal F_i(J)=2^{P_i}\).  On an active petal the original edge
\(B\cup P_i\) remains as a constraint.  If the petal is inactive, then
\(\mathcal F_i(J)\) is proper, the full petal is already forbidden, and
the original edge is redundant.

We define the relaxed local configuration \(\sigma_v\) to be the pair
\((c,k)\), where \(c=|B|\) and \(k\) is the number of active petals.
Thus, writing \(p_{c,k}=\P(\sigma_{\mathbf v}=(c,k))\), we have
\[
    \alpha_G(\lam)
    =
    \sum_{c=1}^{r-1}\sum_{k=0}^d
    p_{c,k}\P(\mathbf v\in\mathbf I\mid \sigma_{\mathbf v}=(c,k)).
\]
The configuration records no further information about an inactive
petal; the local LYM estimate below is the relaxation which makes this
coarsening possible.

We need the following elementary bound for an inactive petal.

\begin{lemma}\label{lem:proper-downset}
Let \(\varnothing\neq\mathcal F\subsetneq2^{[q]}\) be a proper downset, and choose
\(\mathbf A\in\mathcal F\) with probability proportional to
\(\lam^{|\mathbf A|}\).  Then \(\E|\mathbf A|\leq qy_q\).
\end{lemma}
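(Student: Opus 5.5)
The plan is to compare the given downset \(\mathcal F\) with the largest proper downset \(\mathcal F_0=2^{[q]}\setminus\{[q]\}\). The first step is to check that \(\mathcal F_0\) attains exactly the bound \(qy_q\). Choosing \(\mathbf A\in\mathcal F_0\) with probability proportional to \(\lam^{|\mathbf A|}\) is the same as taking the product measure in which each element is included independently with probability \(t=\lam/(1+\lam)\), and conditioning on \(\mathbf A\neq[q]\). Then
\[
\E|\mathbf A|=\frac{qt-qt^q}{1-t^q}=qy_q .
\]
So the statement says that among proper downsets, \(\mathcal F_0\) maximizes the expected size.

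Next, reduce to level profiles. Since \(\lam^{|A|}\) depends only on \(|A|\), I write \(f_k=|\mathcal F\cap\binom{[q]}{k}|/\binom qk\) for \(0\le k\le q\). Then \(|\mathbf A|\) has law \(\nu_f(k)\propto \binom qk\lam^k f_k\). Because \(\mathcal F\) is proper, \(f_q=0\); because \(\mathcal F\) is a nonempty downset, \(f_0=1\). For \(\mathcal F_0\) the profile is \(g_k=1\) for \(k<q\) and \(g_q=0\).

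The key combinatorial input is the normalized matching (local LYM) property of downsets: \(f_{k+1}\le f_k\). To prove it, double count the pairs \((A,B)\) with \(A\in\mathcal F\), \(|A|=k+1\), and \(B\subset A\), \(|B|=k\). Every such \(B\) lies in \(\mathcal F\), and each \(k\)-set has exactly \(q-k\) supersets of size \(k+1\). This gives
\[
(k+1)\,|\mathcal F_{k+1}|\le (q-k)\,|\mathcal F_k|,
\]
which is exactly \(f_{k+1}\le f_k\).

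Finally, compare means. Both \(\nu_f\) and \(\nu_g\) are supported in \(\{0,\dots,q-1\}\). On that set the likelihood ratio \(\nu_f(k)/\nu_g(k)\) is proportional to \(f_k\), which is nonincreasing in \(k\). Monotone likelihood ratio implies that \(\nu_f\) is stochastically dominated by \(\nu_g\). Directly: for every threshold \(m\), the tail \(\nu_f(\{k\ge m\})\le \nu_g(\{k\ge m\})\), by a Chebyshev-sum / rearrangement argument applied to the weights \(\binom qk\lam^k\). Summing the tails gives \(\E|\mathbf A|\le \E_{\nu_g}[k]=qy_q\).

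The only step requiring care is the stochastic-domination step. I will verify it directly: \(\nu_f(k\ge m)\le\nu_g(k\ge m)\) rearranges to
\[
\Big(\sum_{k\ge m}w_kf_k\Big)\Big(\sum_{k<m}w_k\Big)\le\Big(\sum_{k<m}w_kf_k\Big)\Big(\sum_{k\ge m}w_k\Big),
\]
where \(w_k=\binom qk\lam^k\) and all sums run over \(k\le q-1\). This holds because every \(f_k\) with \(k\ge m\) is at most every \(f_j\) with \(j<m\).
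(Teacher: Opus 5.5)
Your proof is correct and follows the paper's argument. Local LYM gives nonincreasing level densities \(f_k\) with \(f_q=0\). Reweighting the law proportional to \(\binom{q}{k}\lambda^k\) on \(\{0,\dots,q-1\}\) by a nonincreasing sequence can only lower the mean, and the unweighted mean equals \(qy_q\). You simply supply details the paper leaves implicit: the double-counting proof of local LYM and the explicit tail comparison. You also obtain \(qy_q\) by conditioning the product measure rather than from the logarithmic derivative.
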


\begin{proof}
For \(0\leq j\leq q\), write
\[
    f_j=\frac{|\mathcal F\cap\binom{[q]}j|}{\binom qj}.
\]
The local LYM inequality gives \(f_0\geq f_1\geq\cdots\geq f_q=0\).
Let \(\nu\) be the probability distribution on
\(\{0,\ldots,q-1\}\) proportional to \(\binom qj\lam^j\).
The rank distribution on \(\mathcal F\) is obtained by reweighting
\(\nu\) by the nonincreasing sequence \(f_j\), and hence has no larger
mean.  Therefore
\[
    \E|\mathbf A|
    \leq
    \lam\frac{\mathrm d}{\mathrm d\lam}
    \ln\big((1+\lam)^q-\lam^q\big)
    =qy_q.
\]
\end{proof}

Suppose now that exactly \(k\) of the \(d\) petals are active.  The
probability that the whole block \(B\) is occupied is
\[
    w_{c,k}
    =
    \frac{t^c(1-t^q)^k}
    {1-t^c+t^c(1-t^q)^k}.
\]
Conditional on \(B\) not being full, a specified vertex of \(B\) has
occupation probability \(y_c\).  Thus
\[
    \alpha_{c,k}^v
    =\P(v\in\mathbf I\mid \sigma_v=(c,k))
    =y_c+(1-y_c)w_{c,k}.
\]
An active petal has expected occupation
\(q\big((1-w_{c,k})t+w_{c,k}y_q\big)=q(t-w_{c,k}\delta_q)\),
whereas \Cref{lem:proper-downset} bounds an inactive petal by \(qy_q\).
Consequently,
\[
    \frac1{dq}\sum_{i=1}^d
    \E\bigl(|\mathbf I\cap P_i|\mid \sigma_v=(c,k)\bigr)
    \leq
    \alpha_{c,k}^N,
\]
where
\[
    \alpha_{c,k}^N
    =
    \frac{k}{d}(t-w_{c,k}\delta_q)
    +\left(1-\frac{k}{d}\right)y_q.
\]
For a twin class \(B\), write \(c_B=|B|\), \(q_B=r-c_B\), and let
\(\alpha_B^v\) be the occupation probability of any specified vertex of
\(B\).  Define
\[
    \alpha_B^N=
    \frac{1}{dq_B}
    \sum_{e\supseteq B}
    \sum_{\substack{B'\in\mathcal B,\ B'\subseteq e\\B'\neq B}}
    c_{B'}\alpha_{B'}^v,
\]
where the inner sum is over twin classes contained in \(e\).  We shall
use the following transport identity:
\begin{align*}
    \sum_Bc_Bq_B\alpha_B^N
    &=
    \frac1d\sum_{e\in E(G)}
    \sum_{B\subseteq e}c_B
    \sum_{\substack{B'\subseteq e\\B'\neq B}}c_{B'}\alpha_{B'}^v\\
    &=
    \frac1d\sum_{e\in E(G)}
    \sum_{B'\subseteq e}c_{B'}q_{B'}\alpha_{B'}^v
    =
    \sum_Bc_Bq_B\alpha_B^v.
\end{align*}

For \(1\leq c\leq r-1\) and \(0\leq k\leq d\), let
\[
    u_{c,k}
    =
    p_{c,k}\,
    \frac1{d(r-c)}
    \sum_{i=1}^d
    \E\bigl(|\mathbf I\cap P_i|
    \mid \sigma_{\mathbf v}=(c,k)\bigr).
\]
If \(p_{c,k}=0\), we use the convention \(u_{c,k}=0\).
The local LYM estimate gives the linear inequalities
\(0\leq u_{c,k}\leq p_{c,k}\alpha_{c,k}^N\).
The transport identity above is equivalently
\[
    \sum_{c=1}^{r-1}\sum_{k=0}^d
    (r-c)\bigl(p_{c,k}\alpha_{c,k}^v-u_{c,k}\bigr)=0.
\]

Let \(\Pi_r\) be the set of integer partitions of \(r\) whose parts are
smaller than \(r\), and let \(m_c(\pi)\) be the multiplicity of the part
\(c\) in \(\pi\).  Let \(z_\pi\) be the proportion of quotient edges
whose class-size partition is \(\pi\).  Double-counting incidences of
classes of size \(c\) gives
\[
    \sum_{k=0}^d p_{c,k}
    =
    \frac{c}{r}\sum_{\pi\in\Pi_r}m_c(\pi)z_\pi.
\]
Taking into account that the \(p_{c,k}\)'s and \(z_\pi\)'s are
probability distributions, we obtain the following LP:
\begin{align*}
    \alpha_{\mathrm{LP}}(\lam)
    =\max\quad&
    \sum_{c=1}^{r-1}\sum_{k=0}^d
    p_{c,k}\alpha_{c,k}^v
    \qquad\text{s.t.}\\
    &\sum_{c=1}^{r-1}\sum_{k=0}^d
    (r-c)\bigl(p_{c,k}\alpha_{c,k}^v-u_{c,k}\bigr)=0,\\
    &\sum_{k=0}^d p_{c,k}
    =
    \frac{c}{r}\sum_{\pi\in\Pi_r}m_c(\pi)z_\pi
    \qquad(1\leq c\leq r-1),\\
    &\sum_{\pi\in\Pi_r}z_\pi=1,\\
    &0\leq u_{c,k}\leq p_{c,k}\alpha_{c,k}^N,
    \qquad p_{c,k}\geq0
    \qquad(1\leq c\leq r-1,\ 0\leq k\leq d),\\
    &z_\pi\geq0\qquad(\pi\in\Pi_r).
\end{align*}
The class-size constraints and \(\sum_\pi z_\pi=1\) imply
\(\sum_{c,k}p_{c,k}=1\).
For fixed \(r\) and \(d\), this is a finite linear program, and the
distribution supplied by \(G\) is a feasible primal point whose
objective value is \(\alpha_G(\lam)\).  Consequently any
feasible dual solution gives an upper bound on the occupancy fraction.
The full dual initially has one nonnegative multiplier for each upper
bound \(u_{c,k}\leq p_{c,k}\alpha_{c,k}^N\).  These multipliers may be
eliminated.  Indeed, no optimum has \(\Lam_T>0\), since replacing a
positive \(\Lam_T\) by \(0\) only weakens the local constraints.  For
\(\Lam_T\leq0\), the optimal multiplier of the upper bound is
\(-\Lam_Tq\).  The dual LP therefore has variables
\(\Lam_P,\Lam_T,\Lam_1,\ldots,\Lam_{r-1}\):
\begin{align*}
    \min\quad&\Lam_P
    \qquad\text{s.t.}\\
    &\Lam_c+\Lam_Tq
    \bigl(\alpha_{c,k}^N-\alpha_{c,k}^v\bigr)
    \geq\alpha_{c,k}^v
    \qquad(c+q=r,\ 0\leq k\leq d),\\
    &\frac1r\sum_{c=1}^{r-1}
    c\,m_c(\pi)\Lam_c\leq\Lam_P
    \qquad(\pi\in\Pi_r),\\
    &\Lam_T\leq0.
\end{align*}
Here \(\Lam_T\) is the dual variable corresponding to the transport
constraint, \(\Lam_c\) corresponds to the class-size constraint for
\(c\), and \(\Lam_P\) corresponds to the normalization of \(z\).

\subsection{Verification of the dual certificate}
We first treat the range \(\chi_s\geq1\).  The candidate extremal
configuration \(H_{r,d}\) has class-size partition \(1+s\).  We choose
the transport dual variable by making the two endpoint constraints with
\(c=1\) tight.  Put
\[
    W=w_{1,d},
    \qquad
    a_s=\frac{t-W}{t-W+\delta_s(1-W)},
    \qquad
    \Lam_T=-\frac{a_s}{s}.
\]
For every \(1\leq q\leq s\), put \(a_q=-\Lam_Tq=a_sq/s\).
For \(c+q=r\) and \(0\leq k\leq d\), define
\begin{equation}\label{eq:lck}
    L_{c,k}
    =
    \alpha_{c,k}^v
    -\Lam_Tq\bigl(\alpha_{c,k}^N-\alpha_{c,k}^v\bigr)
    =
    (1-a_q)\alpha_{c,k}^v
    +a_q\alpha_{c,k}^N,
\end{equation}
and put \(\Lam_c=\max_{0\leq k\leq d}L_{c,k}\).
Finally, set
\[
    \Lam_P=
    \frac1r\max_{\pi\in\Pi_r}
    \sum_{c=1}^{r-1}c\,m_c(\pi)\Lam_c.
\]
These choices satisfy all dual constraints.  Hence
\begin{equation}\label{eq:global-certificate}
    \alpha_G(\lam)
    \leq\alpha_{\mathrm{LP}}(\lam)
    \leq\Lam_P.
\end{equation}

\subsubsection{Local constraints}
It remains to control the local maximum defining \(\Lam_c\).  The following
endpoint reduction replaces the full family of local constraints by two
constraints for each class size.

\begin{lemma}\label{lem:endpoint-local}
For every \(1\leq c\leq r-1\), we have \(\Lam_c=\max\{L_{c,0},L_{c,d}\}\).
Moreover,
\begin{align}
    L_{c,0}&=t-a_q\delta_q,\label{eq:l-c-zero}\\
    L_{c,d}
    &=t-(1-a_q)\delta_c
    +w_{c,d}\big((1-a_q)(1-y_c)-a_q\delta_q\big),\label{eq:l-c-d}
\end{align}
the coefficient of \(w_{c,d}\) in \eqref{eq:l-c-d} is nonnegative, and
\(L_{1,0}=L_{1,d}\).
\end{lemma}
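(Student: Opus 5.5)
The plan is to prove the two endpoint formulas first by direct substitution, then the identity \(L_{1,0}=L_{1,d}\) from the choice of \(a_s\), then the sign of the coefficient of \(w_{c,d}\). The reduction \(\Lam_c=\max\{L_{c,0},L_{c,d}\}\) will follow from a unimodality argument in \(k\).

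\emph{Rewriting \(L_{c,k}\).} Substitute \(\alpha_{c,k}^v=y_c+(1-y_c)w_{c,k}\) and \(y_q=t-\delta_q\) into \eqref{eq:lck}. Writing \(x=k/d\) and \(w=w_{c,k}\), the \(\alpha^N\) part becomes
\[
t-\delta_q+x\delta_q(1-w),
\]
so that
\[
L_{c,k}=(1-a_q)\bigl(y_c+(1-y_c)w\bigr)+a_q(t-\delta_q)+a_q\delta_q\,x(1-w).
\]

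\emph{The endpoint formulas.} At \(k=0\) we have \(w_{c,0}=t^c\). The identity \(y_c(1-t^c)+t^c=t\), which is immediate from the definition of \(y_c\), gives \(\alpha^v_{c,0}=t\). Hence \(L_{c,0}=t-a_q\delta_q\), which is \eqref{eq:l-c-zero}. At \(k=d\) we have \(x=1\). Using \(y_c=t-\delta_c\) and collecting the terms in \(w_{c,d}\) gives \eqref{eq:l-c-d}.

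\emph{The identity \(L_{1,0}=L_{1,d}\).} For \(c=1\) we have \(y_1=0\) and \(\delta_1=t\), so the equation \(L_{1,0}=L_{1,d}\) reads
\[
(1-a_s)(t-W)=a_s\delta_s(1-W).
\]
Solving for \(a_s\) gives exactly the defining formula for \(a_s\). So this identity is built into the choice of \(\Lam_T\).

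\emph{Unimodality in \(k\).} Treat \(k\) as a real variable on \([0,d]\). Put \(\rho=1-t^q\) and \(\beta=-\log\rho>0\). Then \(w_{c,k}\) is a logistic function of \(k\): it is decreasing, with
\[
\frac{\partial w}{\partial k}=-\beta w(1-w).
\]
Set \(A=(1-a_q)(1-y_c)\) and \(B=a_q\delta_q\), so that the nonconstant part of \(L_{c,k}\) is \(Aw+Bx(1-w)\). Its \(k\)-derivative is
\[
(1-w)\Bigl[\frac Bd-\beta\,w\,(A-Bx)\Bigr].
\]
On the range where \(A-Bx\ge0\), both \(w\) and \(A-Bx\) are nonnegative and decreasing, so the bracket is increasing. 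Where \(A-Bx<0\), the bracket is already positive. Hence the bracket changes sign at most once, from negative to positive. So \(L_{c,k}\) first decreases and then increases in \(k\), i.e. it is quasiconvex, and its maximum over \(\{0,\dots,d\}\) is attained at \(k=0\) or \(k=d\). This gives \(\Lam_c=\max\{L_{c,0},L_{c,d}\}\).

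\emph{Nonnegativity of the coefficient \(A-B\).} The coefficient of \(w_{c,d}\) in \eqref{eq:l-c-d} is \(A-B\), and we need \(A\ge B\).

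For \(c=1\), \(q=s\), the defining equation of \(a_s\) gives
\[
1-a_s=\frac{\delta_s(1-W)}{t-W+\delta_s(1-W)},
\qquad
a_s\delta_s=\frac{\delta_s(t-W)}{t-W+\delta_s(1-W)}.
\]
So \(A\ge B\) reduces to \(1-W\ge t-W\), which holds since \(t<1\).

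For general \(c\), I would use three facts:
\begin{itemize}
\item \(1-y_c\ge1-t\);
\item \(a_q=a_sq/s\le a_s\);
\item \(j\mapsto j\delta_j=jt^j(1-t)/(1-t^j)\) is decreasing, since \((1-t^j)/j=(1-t)(1+\cdots+t^{j-1})/j\) compares favourably with \(t^j\).
\end{itemize}
Together these should give
\[
a_q\delta_q=a_s\,\frac{q\delta_q}{s}\le a_s\,\frac{q\delta_q}{s},
\]
to be compared against \((1-a_q)(1-t)\), using an upper bound on \(a_s\) of the form \(a_s\le t/(t+\delta_s)\).

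I expect this general-\(c\) comparison to be the main obstacle. Two points are unclear. First, \(a_s\) is not obviously monotone in \(W\), so the bound on \(a_s\) needs care. Second, the factor \(1-y_c\ge1-t\) may be too lossy when \(c\) is large, which is exactly when \(q\) is small and \(\delta_q\) is largest. If the crude chain fails, I would instead compare the required inequality \(A\ge B\) directly with the \(c=1\) case, using monotonicity in \(q\) of \(\delta_q/(1-y_{r-q})\).
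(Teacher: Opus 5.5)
\textbf{Verdict: genuine gap.} Your endpoint formulas, the identity \(L_{1,0}=L_{1,d}\), and the sign check for \(c=1\) are correct and agree with the paper. Your unimodality step is a legitimate variant of the paper's. You differentiate in a real variable \(k\) using \(\partial_k w=-\beta w(1-w)\). The paper instead shows that the discrete difference \(L_{c,k+1}-L_{c,k}\) has the sign of a quantity \(M_k\) that is strictly increasing in \(k\). Your version needs only \(a_q\delta_q\ge0\) and \(0<w<1\), and it works.

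The gap is the nonnegativity of the coefficient \((1-a_q)(1-y_c)-a_q\delta_q\) for \(c\ge2\). This is part of the statement, and you leave it open. Your displayed chain \(a_q\delta_q=a_sq\delta_q/s\le a_sq\delta_q/s\) compares a quantity with itself. More importantly, you never use the standing hypothesis \(0<\lambda\le1\), i.e.\ \(t\le1/2\). That hypothesis is exactly what lets the crude information \(a_s<1\) suffice. At \(a_q=q/s\), the resulting lower bound \(\bigl((c-1)(1-y_c)-q\delta_q\bigr)/s\) tends to \(-1/(cs)\) as \(t\to1\), so an argument of this type must use the size of \(t\).

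The missing step is short. It needs no refined bound on \(a_s\) and no monotonicity of \(a_s\) in \(W\). Since \(0<W<t\), we have \(0<a_s<1\), hence \(a_q=a_sq/s\le q/s\). The coefficient is decreasing in \(a_q\), and \(1-y_c\ge1-t\), so
\[
(1-a_q)(1-y_c)-a_q\delta_q\ge\frac{(c-1)(1-t)-q\delta_q}{s}=\frac{1-t}{s(1-t^q)}\bigl(c-1-st^q\bigr).
\]
Moreover \(st^q\le s2^{-q}\le c-1\), because \(t\le1/2\) and \(s=c+q-1\le(c-1)2^q\) when \(c\ge2\).

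Alternatively, your own observation that \(j\mapsto j\delta_j\) is decreasing closes the case at once. It gives \(q\delta_q\le\delta_1=t\), which turns the same lower bound into \(\bigl((c-1)(1-t)-t\bigr)/s\ge(1-2t)/s\ge0\). In particular, your worry that \(1-y_c\ge1-t\) is too lossy for large \(c\) is unfounded, since the factor \(c-1\) only grows with \(c\).
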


\begin{proof}
Put \(x=1-t^q\), \(\zeta_k=t^cx^k/(1-t^c)\) and
\(w_{c,k}=\zeta_k/(1+\zeta_k)\).
Up to a term independent of \(k\), the expression in \eqref{eq:lck} is
\(\beta w_{c,k}+\gamma k(1-w_{c,k})\), where \(\beta=(1-a_q)(1-y_c)\) and
\(\gamma=a_q\delta_q/d\).
A direct subtraction shows that the sign of
\(L_{c,k+1}-L_{c,k}\) is the sign of
\(M_k=\gamma/\zeta_k+\gamma+(1-x)(\gamma k-\beta)\).
Furthermore, \(M_{k+1}-M_k=\gamma(1/x-1)/\zeta_k+\gamma(1-x)>0\).
Thus the successive differences change sign at most once, from negative
to positive, and the maximum is attained at an endpoint.

The displayed endpoint formulas follow directly from \eqref{eq:lck}.
We verify the sign of the remaining coefficient.  Since \(0<a_s<1\),
we have \(a_q\leq q/s\).  Also \(1-y_c\geq1-t\).  If \(c\geq2\), then
\begin{align*}
    &(1-a_q)(1-y_c)-a_q\delta_q\\
    &\qquad\geq
    \frac1s\big((c-1)(1-t)-q\delta_q\big)\\
    &\qquad=
    \frac{1-t}{s(1-t^q)}\big(c-1-st^q\big)\geq0,
\end{align*}
because \(s=c+q-1\leq(c-1)2^q\) and \(t\leq1/2\).  If \(c=1\), then
\[
    1-a_s
    =
    \frac{\delta_s(1-W)}{t-W+\delta_s(1-W)}
    \geq a_s\delta_s.
\]
Finally, the definition of \(a_s\) is precisely the equality
\((1-a_s)(t-W)=a_s\delta_s(1-W)\),
which is equivalent to \(L_{1,0}=L_{1,d}\).
\end{proof}
\subsubsection{Partition constraints}
The finite certificate above converges to a particularly simple
expression.  For \(c+q=r\), put
\[
    a_q^*
    =
    \frac{q}{s}\frac{t}{t+\delta_s},
    \qquad
    D_c=
    \min\{a_q^*\delta_q,(1-a_q^*)\delta_c\}.
\]
The limiting endpoint value is \(t-D_c\).  The following inequality shows
that the partition \(1+s\), corresponding to \(H_{r,d}\), gives the
largest edge contribution.

\begin{lemma}\label{lem:limiting-partition}
For every partition \(\pi\) of \(r\) into parts smaller than \(r\), we have
\(\sum_{c\in\pi}cD_c\geq s\delta_s\).
Equality is attained by the partition \(1+s\).
\end{lemma}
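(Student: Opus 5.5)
The plan is to verify the equality case for \(1+s\) first, and then prove the inequality by bounding each part of \(\pi\) separately. Throughout, \(0<\lam\leq1\), so \(0<t\leq 1/2\).

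\textbf{Preliminary facts.} I will record three elementary facts about \(\delta_j=t^j(1-t)/(1-t^j)\).
\begin{enumerate}
\item \(\delta_1=t\).
\item \(\delta_j\) is decreasing in \(j\).
\item \(j\delta_j\) is nonincreasing in \(j\). This holds because \(j\delta_j=j t^j/(1+t+\cdots+t^{j-1})\), and the ratio of consecutive terms is at most \(1\) for \(t\le 1\), since \(t^{j}\le \frac1j\sum_{i<j}t^i\).
\end{enumerate}
In particular \(s\delta_s\leq \delta_1=t\).

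\textbf{Equality for \(1+s\).} For the part \(c=1\) we have \(q=s\) and \(a_s^*=t/(t+\delta_s)\). Both candidates in the minimum defining \(D_1\) equal \(t\delta_s/(t+\delta_s)\), using \(\delta_1=t\). For the part \(c=s\) we have \(q=1\) and \(a_1^*=t/(s(t+\delta_s))\). I claim the minimum defining \(D_s\) is attained by \((1-a_1^*)\delta_s\). This amounts to \(\big(s(t+\delta_s)-t\big)\delta_s\leq t^2\), which follows from \(s\delta_s\le t\) and \(\delta_s\le t\) after a short rearrangement. Then
\[
1\cdot D_1+sD_s=\frac{t\delta_s}{t+\delta_s}+s\delta_s-\frac{t\delta_s}{t+\delta_s}=s\delta_s .
\]

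\textbf{Other partitions.} Since \(1+s\) is the only partition of \(r\) with a part equal to \(s\), every other \(\pi\in\Pi_r\) has all parts \(c\leq s-1\), so each \(q=r-c\geq2\). For such \(\pi\) I aim for a per-part bound
\[
cD_c\geq \frac{c}{r}\,s\delta_s\cdot\theta_c, \qquad \sum_{c\in\pi} c\,\theta_c\geq r,
\]
and the simplest target is \(D_c\geq \frac{s}{s+1}\delta_s\) for every part with \(c\le s-1\). Summing this over the parts gives \(\sum c D_c\geq \frac{r s}{s+1}\delta_s=s\delta_s\).

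To get the target I bound the two branches of the minimum separately.
\begin{itemize}
\item First branch: \(a_q^*\delta_q=\frac{q\delta_q}{s}\cdot\frac{t}{t+\delta_s}\geq \delta_s\frac{t}{t+\delta_s}\), by monotonicity of \(j\delta_j\) with \(q\le s\). Since \(s\delta_s\le t\), this is at least \(\frac{s}{s+1}\delta_s\).
\item Second branch: \((1-a_q^*)\delta_c\geq\big(1-\frac qs\frac{t}{t+\delta_s}\big)\delta_{c}\), and I must compare this with \(\frac{s}{s+1}\delta_s\) using \(\delta_c\ge\delta_s\). Here the gain must come from \(\delta_c/\delta_s\geq t^{c-s}\cdot\frac{1-t^s}{1-t^c}\), which is large when \(c\) is small, i.e. exactly when \(q\) is large and \(1-a_q^*\) is small.
\end{itemize}

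\textbf{Main obstacle.} The second branch for intermediate \(c\) is where I expect the difficulty. Concretely, I need
\[
(1-a_q^*)\,\delta_c\geq \frac{s}{s+1}\,\delta_s \qquad\text{for } 1\le c\le s-1 .
\]
I would first try to establish it in the cleaner form \(\big(1-\frac{q}{s}\big)\delta_c+\frac{q}{s}\frac{\delta_s}{t+\delta_s}\delta_c\geq\frac{s}{s+1}\delta_s\). For \(c=1\) this is immediate, since \(\delta_1=t\ge s\delta_s\). For \(c\ge2\) I would use \(\delta_c\ge \delta_s t^{c-s}\), which gives a factor of at least \(2^{s-c}\ge2\), together with \(1-q/s=(c-1)/s\); this should suffice, mirroring the estimate \(s\le (c-1)2^q\) used in \Cref{lem:endpoint-local}. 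If the uniform target fails in some boundary range, I would fall back to bounding \(\sum_c cD_c\) partition by partition. Parts with \(c=1\) contribute \(t\delta_s/(t+\delta_s)\) each, which is large, and the crude bound \(D_c\ge(1-\frac qs)\delta_s\) handles the remaining parts.
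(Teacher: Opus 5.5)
Your overall strategy works and is a bit cleaner than the paper's. Since \(1+s\) is the only partition of \(r\) with a part equal to \(s\), a homogeneous bound \(D_c\ge\frac{s}{s+1}\delta_s\) for \(c\le s-1\) sums to exactly \(s\delta_s\). The paper instead bounds every \(f_c=cD_c\) below by the chord \(\ell_c\) through \(f_1\) and \(f_s\), and then averages over the number of parts. The following pieces are all correct:
\begin{itemize}
\item your equality computation for \(1+s\);
\item the bound \(D_1=\theta\delta_s\ge\frac{s}{s+1}\delta_s\);
\item the first-branch bound via monotonicity of \(j\delta_j\), which is the paper's \(\delta_q/\delta_s\ge s/q\).
\end{itemize}

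The gap is in the second branch. Your estimate \((1-a_q^*)\delta_c\ge\frac{c-1}{s}2^{s-c}\delta_s\) needs \((c-1)2^{s-c}(s+1)\ge s^2\). This holds for \(s\ge4\), where the worst case \(c=s-1\) gives \(2(s-2)(s+1)\ge s^2\). It fails for \(s=3\), \(c=2\), where it gives only \(\frac23\delta_3<\frac34\delta_3\).

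This case cannot be avoided. For \(r=4\) and \(\pi=2+2\) you need exactly \(D_2\ge\frac34\delta_3\). Since \(a_2^*=\frac23\theta\ge\frac12\), the second branch is the binding one. Your fallback does not help: \(D_c\ge\frac{c-1}{s}\delta_s\) yields only \(\sum_{c\in\pi}cD_c\ge\frac43\delta_3\) for \(2+2\).

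The analogy with \(s\le(c-1)2^q\) in \Cref{lem:endpoint-local} is also misleading. There \(2^q=2\cdot2^{s-c}\), so that inequality has a factor \(2\) to spare that you do not have here.

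The missing ingredient is the exact ratio \(\delta_2/\delta_3=1/t+t/(1+t)\). It is decreasing on \((0,1/2]\), so it is at least \(7/3>9/4\). Hence \((1-a_2^*)\delta_2\ge\frac13\delta_2\ge\frac79\delta_3\ge\frac34\delta_3\). This is exactly the special case \(s=3\), \(c=2\) that the paper handles separately. In that case your requirement \(2D_2\ge\frac32\delta_3\) coincides with the paper's \(h_2\ge\ell_2\). With this one computation added, your proof is complete.
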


\begin{proof}
Put \(\rho=\delta_s/t\) and \(\theta=1/(1+\rho)=t/(t+\delta_s)\).
Since \(t\leq1/2\), we have \(s\delta_s\leq t\), and hence
\(\theta\geq s/(s+1)\).  Write \(f_c=cD_c=\min\{g_c,h_c\}\),
where, as before, \(q=r-c\),
\[
    g_c=\frac{cq\theta}{s}\delta_q,
    \qquad
    h_c=\frac{c(c-1+s\rho)\theta}{s}\delta_c.
\]
At the endpoints, \(f_1=\theta\delta_s\) and \(f_s=(s-\theta)\delta_s\),
so \(f_1+f_s=s\delta_s\).
Indeed, the two branches defining \(D_1\) agree, while
\(\rho\leq1/s\) shows that the second branch defines \(D_s\).

Let \(\ell_c\) be the chord joining these two endpoint values:
\[
    \ell_c
    =
    \delta_s\left(
    \theta+\frac{c-1}{s-1}(s-2\theta)
    \right).
\]
Since \(\theta\geq s/(s+1)\), we have \(\ell_c\leq c\theta\delta_s\leq c\delta_s\).
Also,
\[
    \frac{\delta_q}{\delta_s}
    =
    \frac{\sum_{i=1}^st^{-i}}{\sum_{i=1}^qt^{-i}}
    \geq\frac{s}{q},
\]
and therefore \(g_c\geq c\theta\delta_s\geq\ell_c\).

For the second branch, \((c-1+s\rho)/(1+\rho)\geq c-1\), so
\(h_c\geq c(c-1)\delta_c/s\).
If \(s\geq4\) and \(2\leq c\leq s-1\), then
\[
    \frac{\delta_c}{\delta_s}
    =
    t^{c-s}\frac{1-t^s}{1-t^c}
    \geq2^{s-c},
    \qquad
    (c-1)2^{s-c}\geq s.
\]
Thus \(h_c\geq c\delta_s\geq\ell_c\).  The only remaining interior
case is \(s=3,c=2\).  Here \(\ell_2=3\delta_3/2\) and \(h_2\geq2\delta_2/3\),
and \(\delta_2/\delta_3=1/t+t/(1+t)\geq7/3>9/4\).
There is no interior case when \(s=2\).  We have therefore proved
\(f_c\geq\ell_c\) for every \(c\).

Suppose that \(\pi\) has \(m\geq2\) parts.  Since their sum is \(s+1\),
\begin{align*}
    \sum_{c\in\pi}f_c
    &\geq
    mf_1+\frac{f_s-f_1}{s-1}(s+1-m)\\
    &=
    f_1+f_s
    +(m-2)\left(
    f_1-\frac{f_s-f_1}{s-1}
    \right)\\
    &\geq f_1+f_s=s\delta_s,
\end{align*}
where the last inequality is again equivalent to
\(\theta\geq s/(s+1)\).
\end{proof}
\subsection{Proof of the occupancy and counting bounds}
By the preceding section, the candidate dual point is feasible.  We now
compare its finite endpoint values in
\Cref{lem:endpoint-local} with the limiting quantities in
\Cref{lem:limiting-partition}.  Continue to assume that \(\chi_s\geq1\).
Since
\[
    W=
    \frac{t(1-t^s)^d}{1-t+t(1-t^s)^d},
\]
we have \(W\leq2te^{-\chi_s}\) and \(t-W\geq(1-2/e)t\).
Writing \(a_s^*=t/(t+\delta_s)\),
a direct subtraction gives
\[
    a_s^*-a_s
    =
    \frac{W\delta_s(1-t)}
    {(t+\delta_s)(t-W+\delta_s(1-W))}
    \leq
    C_r\frac{\delta_s}{t}e^{-\chi_s}.
\]
It follows from \eqref{eq:l-c-zero} that the finite \(k=0\) endpoint
exceeds its limiting value by at most
\[
    (a_q^*-a_q)\delta_q
    \leq
    C_r\frac{\delta_s\delta_q}{t}e^{-\chi_s}
    \leq C_rt^se^{-\chi_s}
    \leq\frac{C_r}{d}.
\]

For the other endpoint, \eqref{eq:l-c-d} and the nonnegativity proved in
\Cref{lem:endpoint-local} give
\[
    L_{c,d}
    -\big(t-(1-a_q^*)\delta_c\big)
    \leq w_{c,d}(1-a_q).
\]
If \(c=1\), then \(1-a_s\leq C_r\delta_s/t\), and hence
\(W(1-a_s)\leq C_r\delta_se^{-\chi_s}\leq C_r/d\).
Suppose \(c\geq2\).  Then \(w_{c,d}\leq2t^ce^{-\chi_q}\).
If \(c\geq q\), then \(\chi_ce^{-\chi_q}=\chi_qt^{c-q}e^{-\chi_q}\leq\chi_qe^{-\chi_q}\).
If \(c<q\), the assumption \(\chi_s\geq1\) gives \(\chi_qt^{c-1}\geq1\), and
\(\chi_ce^{-\chi_q}=\chi_qt^{-(q-c)}e^{-\chi_q}\leq\chi_q^{1+(q-c)/(c-1)}e^{-\chi_q}\).
Both expressions are bounded by a constant depending only on \(r\).
We conclude that \(\Lam_c\leq t-D_c+C_r/d\) uniformly over
\(1\leq c\leq r-1\).
Combining this with
\eqref{eq:global-certificate} and \Cref{lem:limiting-partition} yields
\begin{equation}\label{eq:occupancy-large}
    \alpha_G(\lam)
    \leq
    t-\frac{s\delta_s}{r}+\frac{C_r}{d}
    \qquad(\chi_s\geq1).
\end{equation}

It remains to consider \(\chi_s<1\).  In this range one may use the trivial
dual solution
\[
    \Lam_T=0,\qquad
    \Lam_c=\Lam_P=t.
\]
Indeed, conditional on the full configuration off a vertex, its
occupation probability is either \(0\) or \(t\).  Averaging over the
configurations represented by \((c,k)\) gives
\(\alpha_{c,k}^v\leq t\), so all dual constraints hold.  Therefore
\begin{equation}\label{eq:occupancy-small}
    \alpha_G(\lam)
    \leq
    t
    =
    t-\frac{s\delta_s}{r}
    +O_r(t^s)
    \qquad(\chi_s<1).
\end{equation}
Since \(t-s\delta_s/r=(t+sy_s)/r\),
\eqref{eq:occupancy-large} and \eqref{eq:occupancy-small} prove
\[
    \alpha_G(\lam)
    \leq
    \frac{t+sy_s}{r}
    +O_r\left(\min\left\{t^s,\frac1d\right\}\right).
\]
In terms of \(\lam\), the main term is
\[
    \frac{\lam}{r}\left(
    \frac{1}{1+\lam}
    +(r-1)\frac{(1+\lam)^{r-2}-\lam^{r-2}}
    {(1+\lam)^{r-1}-\lam^{r-1}}
    \right).
\]
This proves \Cref{thm:occupancy}.
The main term in \Cref{thm:occupancy} satisfies
\[
    \lam\frac{\mathrm d}{\mathrm d\lam}
    \left[
    \frac1r\ln\left((1+\lam)
    \big((1+\lam)^s-\lam^s\big)\right)
    \right]
    =
    \frac{t+sy_s}{r}.
\]
Consequently, for \(0<\mu\leq1\), the occupancy identity gives
\begin{align*}
    \frac{\log Z_G(\mu)}{n}
    &=
    \frac1{\ln2}\int_0^\mu
    \frac{\alpha_G(\lam)}{\lam}\,\mathrm d\lam\\
    &\leq
    \frac1r\log\left((1+\mu)
    \big((1+\mu)^s-\mu^s\big)\right) +
    O_r\left(
    \int_0^\mu
    \min\left\{
    \left(\frac{\lam}{1+\lam}\right)^s,\frac1d
    \right\}\frac{\mathrm d\lam}{\lam}
    \right).
\end{align*}
If \(\mu\leq d^{-1/s}\), the error integral is \(O_r(\mu^s)\).
Otherwise, splitting at \(d^{-1/s}\) and using
\(t\leq\lam\) gives \(O_r\big(1/d+\log(d\mu^s)/d\big)\).
This is \(O_r(\Psi_{r,d}(\mu))\), and proves the fugacity estimate in
\Cref{thm:cross-edge-free}.  Taking \(\mu=1\) and using
\(Z_G(1)=i_r(G)\) gives
\[
    \frac{\log i_r(G)}{n}
    \leq
    1+\frac{\log(1-2^{-(r-1)})}{r}
    +O_r\left(\frac{\log d}{d}\right).
\]
This proves \Cref{thm:cross-edge-free}.
\begin{proof}[Proof of \Cref{cor:coefficient_ld}]
If \(i_r^{(k)}(G)=0\), there is nothing to prove. Otherwise, \(i_r^{(k)}(G)\lam^k\leq Z_G(\lam)\) for every \(0<\lam\leq1\).
Taking base-\(2\) logarithms and applying \Cref{thm:cross-edge-free}, with \(C_r\) chosen larger than the implicit constant there, gives the claimed bound for each \(\lam\). Taking the infimum over \(0<\lam\leq1\) completes the proof.
\end{proof}

The container and occupancy arguments above concern asymptotic bounds under a pair-codegree or cross-edge restriction.  We now turn to exact results without either restriction when every vertex belongs to precisely two hyperedges.

\section{The entropy method: the degree-two problem for odd uniformity}\label{sec:entropy}

\subsection{The edge-cover dual}
\label{sec:duality}

Let \(F\) be a loopless multigraph.  An edge cover of \(F\) is a subset of its edge multiset incident to every vertex.  We write \(\ec(F)\) for the number of edge covers of \(F\).

\begin{lemma}
\label{lem:duality}
\Cref{conj:bbn} for \(d=2\) and uniformity \(r\) is equivalent to the following assertion.  If \(F\) is a loopless \(r\)-regular multigraph on \(N\) vertices with edge multiplicity at most \(r-1\), then
\begin{equation}\label{eq:main-edge-cover-bound}
  \ec(F)\leq T_r^{N/4},
\end{equation}
where \(T_r=2^{2(r-1)}+3(2^{r-1}-1)^2\).
\end{lemma}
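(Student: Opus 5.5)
The plan is to make the standard vertex–edge duality for $2$-regular hypergraphs explicit, and then to check that the three quantities in \Cref{conj:bbn} for $d=2$ translate one by one. Given an $r$-uniform $2$-regular hypergraph $G$ on $n$ vertices, build the multigraph $F=F(G)$. Its vertex set is $E(G)$, and it has one edge joining $e$ and $f$ for each vertex $v\in V(G)$ with $E_G(v)=\{e,f\}$. Conversely, a multigraph $F$ yields a hypergraph $G(F)$ with $V(G)=E(F)$, and each vertex $x$ of $F$ gives the hyperedge of edges incident to $x$. We will check the following dictionary:
\begin{itemize}
\item $F$ is loopless, because each vertex of $G$ lies in two distinct edges.
\item $F$ is $r$-regular, because each hyperedge has $r$ vertices.
\item $N=|V(F)|=|E(G)|=2n/r$, by double counting.
\item $G$ is simple (no repeated hyperedges) if and only if no two vertices of $F$ are joined by all $r$ of their incident edges. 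Since $F$ is $r$-regular, this is exactly the condition that every edge multiplicity is at most $r-1$.
\end{itemize}
So the two constructions are mutually inverse bijections (up to isomorphism) between simple $r$-uniform $2$-regular hypergraphs and loopless $r$-regular multigraphs with multiplicity at most $r-1$.

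Next, the counting identity. A set $I\subseteq V(G)=E(F)$ is weakly independent if and only if no hyperedge is contained in $I$. Translated to $F$, this says that for every vertex $x$ of $F$, some edge at $x$ lies outside $I$. That is exactly the statement that the complement $E(F)\setminus I$ is an edge cover. Complementation is a bijection, so $i_r(G)=\ec(F)$.

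Finally, the normalization. Since $n=rN/2$, the inequality $i_r(G)\le i_r(H_{r,2})^{n/(2r)}$ becomes $\ec(F)\le i_r(H_{r,2})^{N/4}$. It remains to compute $i_r(H_{r,2})$ from the formula for $Z_{H_{r,d}}$ at $\lambda=1$, $d=2$:
\[
  2^{2(r-1)}+(2^2-1)(2^{r-1}-1)^2=T_r .
\]
This matches the stated $T_r$.

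Since every step is a bijection, the two assertions are equivalent in both directions. The only point needing care is the simplicity–multiplicity correspondence: one must confirm that multiplicity exactly $r$ between two vertices forces the two hyperedges to coincide. That holds because each vertex of $F$ has degree $r$, so both hyperedges then consist of the same $r$ edges. No real obstacle is expected.
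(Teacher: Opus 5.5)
Your proposal is correct and follows the paper's proof essentially step by step. It uses the same vertex--edge duality, the same check that simplicity of \(G\) corresponds to multiplicity at most \(r-1\), the complementation identity \(i_r(G)=\ec(F)\), and the rescaling \(n=rN/2\) that turns the exponent \(n/(2r)\) into \(N/4\). The only cosmetic difference is that you read off \(T_r\) from the closed form of \(Z_{H_{r,2}}(1)\), whereas the paper counts edge covers of the dual four-vertex multigraph \(F_r\) directly.
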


\begin{proof}
Let \(G\) be a simple \(2\)-regular \(r\)-uniform hypergraph.  Define a multigraph \(F\) as follows.  The vertices of \(F\) are the edges of \(G\).  A vertex \(x\in V(G)\), lying in the two hyperedges \(e\) and \(f\), gives an edge \(ef\) in \(F\).  Since \(G\) is \(r\)-uniform, \(F\) is \(r\)-regular.  Since \(G\) is \(2\)-regular, every hypergraph vertex gives exactly one graph edge.  Simplicity of \(G\) implies that the multiplicity of every edge of \(F\) is at most \(r-1\).

Conversely, every loopless \(r\)-regular multigraph with maximum edge multiplicity at most \(r-1\) arises in this way.  Take the edges of \(F\) as the vertices of a hypergraph, and for each vertex \(v\in V(F)\), take one hyperedge consisting of the \(r\) graph edges incident to \(v\).  Looplessness gives \(2\)-regularity, regularity gives \(r\)-uniformity.  If two resulting hyperedges were identical, all \(r\) edges incident to one endpoint would be parallel to the other endpoint, contradicting the multiplicity bound.

Under this correspondence, an independent set in \(G\) is the complement of an edge cover in \(F\), so \(i_r(G)=\ec(F)\).
Moreover, if \(|V(F)|=N\), then \(|V(G)|=|E(F)|=rN/2\), so the exponent \(n/(2r)\) in \Cref{conj:bbn} becomes \(N/4\).

Let \(F_r\) denote the dual of \(H_{r,2}\).  It is the four-vertex \(r\)-regular multigraph obtained from a cycle by giving two opposite edges multiplicity \(r-1\) and the other two opposite edges multiplicity \(1\).  A direct count gives \(\ec(F_r)=2^{2(r-1)}+3(2^{r-1}-1)^2=T_r\).
Indeed, writing \(h=2^{r-1}-1\), the weighted edge-cover count on the underlying four-cycle is \(1+2h+4h^2=(h+1)^2+3h^2\).
\end{proof}

It remains to prove the edge-cover inequality in \Cref{lem:duality}.  Let \(F\) be a loopless \(r\)-regular multigraph with multiplicities \(m_e\).  Its support graph \(S=\supp(F)\) is the simple graph obtained by replacing each nonzero multiplicity by one edge.  Put \(w_e=2^{m_e}-1\) for every support edge \(e\).  If \(J\subseteq E(S)\), write \(w(J)=\prod_{e\in J}w_e\).  Then
\[
  \ec(F)=
  \sum_{\substack{J\subseteq E(S)\\ J\text{ covers }V(F)}}w(J).
\]
Indeed, whenever a support edge of multiplicity \(m\) is used, one may choose any nonempty subset of its \(m\) parallel copies.

\subsection{Entropy decomposition}
\label{sec:entropy-decomposition}

We now describe the general entropy mechanism used throughout the proof.  Fix a partition \(V(F)=\bigsqcup_{B\in\mathcal B}B\).
The parts of the partition are called regions.  A support edge with endpoints in two different regions is a boundary edge; all other support edges are internal to their unique region.
Let \(\partial\mathcal B\) be the set of all boundary support edges, and let \(\partial B\) be the set of boundary support edges incident to \(B\).

Let \(B\) be a region.  For \(U\subseteq B\), define the internal partition function
\[
  Z_B(U)=
  \sum_{\substack{J\subseteq E(S[B])\\ J\text{ covers }U}} w(J),
\]
where \(S[B]\) is the subgraph of \(S\) induced by \(B\).  Thus \(Z_B(U)\) is the total internal weight available when precisely the vertices in \(U\) still need to be covered inside the region.
Vertices outside \(U\) may also be incident to selected internal edges.

For a boundary state \(\eta\subseteq\partial B\), let \(V_B(\eta)\) be the set of vertices of \(B\) hit by \(\eta\).  Define the conditional partition function \(A_B(\eta)=w(\eta)^{1/2}Z_B\bigl(B\setminus V_B(\eta)\bigr)\).
The square root assigns one half of every selected boundary-edge weight to each of its two incident regions.

If \(\omega\subseteq\partial\mathcal B\) is a global boundary state, it records which boundary support edges are selected; write \(\omega_B=\omega\cap\partial B\) for the local boundary state seen by \(B\).  Once \(\omega\) is fixed, the remaining covering constraints are independent across regions.  Indeed, an edge cover \(J\) decomposes uniquely as \(\omega=J\cap\partial\mathcal B\) and \(J_B=J\cap E(S[B])\), where \(J_B\) covers \(B\setminus V_B(\omega_B)\), and conversely any such choices give an edge cover.  Moreover \(w(\omega)\prod_B w(J_B)=\prod_B\bigl(w(\omega_B)^{1/2}w(J_B)\bigr)\).
Hence
\begin{equation}
\label{eq:region-factorization}
  \ec(F)=\sum_\omega\prod_{B\in\mathcal B} A_B(\omega_B).
\end{equation}

We use the Gibbs variational formula: for every nonnegative function \(f\not\equiv0\) on a finite set,
\[
  \log\sum_x f(x)
  =\sup_\mu\left\{\E_\mu\log f(\mathbf X)+\Ent(\mathbf X)\right\},
\]
where the supremum is over all probability measures \(\mu\) on that finite set and \(\mathbf X\) has law \(\mu\).  Measures charging points with \(f=0\) contribute value \(-\infty\); equivalently, one may restrict to the support of \(f\).  See, for example, \cite{thomas2006elements,georgii2011gibbs}; for a recent exposition, see \cite{tao2023gibbs}.  We define the local free energy of a region by
\[
  \Phi_B
  =\sup_{\nu\in\mathcal P(2^{\partial B})}
  \left\{\E_\nu\log A_B(\bm\eta)+\frac12\Ent(\bm\eta)\right\},
\]
where \(\bm\eta\) denotes a random local boundary state with law \(\nu\).  Here and below, we use the same convention that measures charging states with \(A_B=0\) have value \(-\infty\).  The factor \(1/2\) reflects the fact that every boundary variable is shared by two regions.

\begin{lemma}
\label{lem:entropy-decomposition}
For every partition into regions, \(\log\ec(F)\leq\sum_{B\in\mathcal B}\Phi_B\).
\end{lemma}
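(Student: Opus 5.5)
The plan is to apply the Gibbs variational formula to the factorization \eqref{eq:region-factorization} and then use Shearer's inequality, in its simplest "each variable covered exactly twice" form, to split the global boundary entropy into local pieces. If \(\ec(F)=0\) there is nothing to prove, so assume \(\ec(F)>0\). Let \(\bm\omega\) be the random global boundary state whose law \(\mu\) attains the supremum in the Gibbs formula applied to \(f(\omega)=\prod_B A_B(\omega_B)\). Then
\[
  \log\ec(F)=\E_\mu\sum_{B\in\mathcal B}\log A_B(\bm\omega_B)+\Ent(\bm\omega).
\]
Here \(\mu\) is supported on states with \(f>0\), so every \(A_B(\bm\omega_B)>0\) almost surely and all terms are finite.

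The key step is bounding \(\Ent(\bm\omega)\). View \(\bm\omega\) as the vector of indicator variables indexed by the boundary support edges \(\partial\mathcal B\). Each boundary edge has endpoints in exactly two distinct regions, so it lies in exactly two of the sets \(\partial B\). The family \(\{\partial B\}_{B\in\mathcal B}\) therefore covers every coordinate exactly twice, and Shearer's inequality gives
\[
  \Ent(\bm\omega)\leq\frac12\sum_B\Ent(\bm\omega_B).
\]
Boundary edges never join a region to itself, since \(S\) is simple and looplessness is inherited, so no edge is counted twice within one \(\partial B\).

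Substituting, we get \(\log\ec(F)\leq\sum_B\bigl(\E\log A_B(\bm\omega_B)+\tfrac12\Ent(\bm\omega_B)\bigr)\). Each summand is the functional in the definition of \(\Phi_B\), evaluated at the marginal law \(\nu_B\) of \(\bm\omega_B\) on \(2^{\partial B}\). This marginal charges only states with \(A_B>0\), so each summand is at most \(\Phi_B\), and the lemma follows.

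The only point needing care is the Shearer step with the exact covering multiplicity two. Regions with \(\partial B=\varnothing\) contribute \(\Ent=0\) and cause no problem, since their \(\Phi_B\) is simply \(\log A_B(\varnothing)\).
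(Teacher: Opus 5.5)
Your proof is correct and follows the paper's argument: apply the Gibbs variational formula to the factorization \eqref{eq:region-factorization}, use Shearer's inequality with each boundary edge lying in exactly two sets \(\partial B\), and bound each local summand by \(\Phi_B\) through the marginal law of \(\bm\omega_B\). The only cosmetic difference is that you fix the maximizing Gibbs measure, whereas the paper bounds the functional for an arbitrary \(\mu\) and then takes the supremum; this changes nothing.
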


\begin{proof}
Apply the Gibbs variational formula to the region factorization above, on the finite state space \(2^{\partial\mathcal B}\).  Thus \(\mu\) ranges over all probability measures on global boundary states, and \(\bm\omega\) denotes a random global boundary state with law \(\mu\).  Then
\[
  \log\ec(F)
  =\sup_\mu\left\{\Ent(\bm\omega)+\sum_B\E_\mu\log A_B(\bm\omega_B)\right\}.
\]
For every law \(\mu\), Shearer's inequality in fractional-cover form gives \(\Ent(\bm\omega)\leq\frac12\sum_B\Ent(\bm\omega_B)\) \cite{shearer1985problem},
because each coordinate of \(\bm\omega\), namely each boundary edge, appears in exactly two coordinate projections \(\bm\omega_B\).  Therefore the expression inside the supremum is at most
\[
  \sum_B\left(\E_\mu\log A_B(\bm\omega_B)+\frac12\Ent(\bm\omega_B)\right)
  \leq \sum_B\Phi_B.
\]
Taking the supremum over \(\mu\) proves the lemma.
\end{proof}

The variational definition has an equivalent partition-function form.

\begin{lemma}
\label{lem:local-free-energy}
For a region \(B\), set \(\FE_B=\sum_{\eta\subseteq\partial B}A_B(\eta)^2\).
Then \(\Phi_B=\frac12\log\FE_B\).
\end{lemma}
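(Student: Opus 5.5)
The plan is to rewrite the supremum so that the Gibbs variational formula can be applied directly. The key observation is
\[
  \E_\nu\log A_B(\bm\eta)+\tfrac12\Ent(\bm\eta)
  =\tfrac12\Bigl(\E_\nu\log A_B(\bm\eta)^2+\Ent(\bm\eta)\Bigr).
\]
Here the entropy is taken in the same logarithm base as \(\log\), and states with \(A_B(\eta)=0\) are given value \(-\infty\), consistently on both sides. Taking the supremum over \(\nu\in\mathcal P(2^{\partial B})\) gives
\[
  \Phi_B=\tfrac12\sup_\nu\Bigl\{\E_\nu\log f(\bm\eta)+\Ent(\bm\eta)\Bigr\},
  \qquad f(\eta)=A_B(\eta)^2 .
\]
This is exactly the form in which the Gibbs variational formula applies to the nonnegative function \(f\) on the finite set \(2^{\partial B}\).

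Before invoking the formula I will check that \(f\not\equiv0\). Take \(\eta=\partial B\). Each selected boundary edge has weight \(w_e=2^{m_e}-1\geq1\), so \(w(\eta)>0\). Every vertex of \(B\) lies on some support edge, because \(F\) is \(r\)-regular with \(r\geq1\). Therefore the set \(J=E(S[B])\) covers all vertices of \(B\) not hit by \(\partial B\), and \(Z_B(B\setminus V_B(\partial B))\geq w(J)>0\). Hence \(f(\partial B)>0\).

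The formula then yields \(\sup_\nu\{\cdots\}=\log\sum_\eta f(\eta)=\log\FE_B\), and so \(\Phi_B=\frac12\log\FE_B\). If one prefers to avoid citing the formula, the same conclusion follows from nonnegativity of relative entropy. For any \(\nu\) supported on \(\{f>0\}\), let \(\pi=f/\FE_B\); then
\[
  \E_\nu\log f+\Ent(\nu)=\log\FE_B-D(\nu\,\|\,\pi)\leq\log\FE_B ,
\]
with equality at \(\nu=\pi\).

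There is no real obstacle here. The only points needing care are the convention for zero-weight states and the consistency of logarithm bases between \(\Ent\) and \(\log\), both of which the paper fixes uniformly.
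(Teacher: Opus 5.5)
Your proposal is correct and is the same argument as the paper's: rewrite the objective as one half of the Gibbs functional for \(f=A_B^2\) on \(2^{\partial B}\) and apply the variational formula. Your check that \(f(\partial B)>0\) and your relative-entropy derivation are correct details the paper leaves implicit, and your argument also covers the case \(\partial B=\varnothing\), which the paper mentions separately.
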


\begin{proof}
Apply the Gibbs variational formula to \(f(\eta)=A_B(\eta)^2\), and divide by two.
If \(\partial B=\varnothing\), this is the same statement with a single boundary state, giving \(\Phi_B=\log Z_B(B)\).
\end{proof}

\begin{remark}[Aggregation]
\label{rem:aggregation}
Suppose that boundary support edges of multiplicities \(m_1,\ldots,m_j\) are all incident to the same vertex of one fixed region.  Since
\[
  \FE_B=\sum_{\eta\subseteq\partial B}
  w(\eta)Z_B\bigl(B\setminus V_B(\eta)\bigr)^2,
\]
the total local weight of nonempty boundary states hitting that vertex is
\(\prod_{i=1}^j(1+(2^{m_i}-1))-1=2^{m_1+\cdots+m_j}-1\).
Thus, for the purpose of evaluating this single local free-energy partition function, these boundary edges may be aggregated into one effective boundary option whose multiplicity is \(m_1+\cdots+m_j\).  The global boundary variables in \eqref{eq:region-factorization} remain the original support edges.
\end{remark}

\subsection{Local estimates and odd uniformity}
\label{sec:local-regions}

We first analyze the basic local region consisting of two adjacent vertices.

\begin{lemma}
\label{lem:two-vertex-region}
Let \(uv\) be a support edge of multiplicity \(m\), where \(1\leq m\leq r-1\).  Suppose that the remaining incident multiplicities at each of \(u\) and \(v\) sum to \(r-m\).  For the two-vertex region \(B=\{u,v\}\), we have \(\Phi_B\leq\frac12\log T_r\).
Equality in this local estimate can occur only when \(m=1\) or \(m=r-1\).
\end{lemma}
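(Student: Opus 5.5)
The plan is to compute \(\FE_B\) explicitly via \Cref{lem:local-free-energy} and then compare it with \(T_r\) by a one-variable convexity argument.

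\emph{Setting up the local partition function.} By \Cref{rem:aggregation}, the boundary edges at \(u\), whose total multiplicity is \(r-m\), merge into a single effective boundary option. Its nonempty states carry total weight \(a=2^{r-m}-1\), and the same holds at \(v\). The only internal support edge is \(uv\), with weight \(w=2^m-1\). I will evaluate \(Z_B(U)\) for the four possible uncovered sets:
\[
Z_B(\{u,v\})=Z_B(\{u\})=Z_B(\{v\})=w,\qquad Z_B(\varnothing)=1+w.
\]
Summing \(w(\eta)Z_B(B\setminus V_B(\eta))^2\) over the four aggregated boundary patterns (neither endpoint hit, only \(u\), only \(v\), both) gives
\[
\FE_B=w^2+2aw^2+a^2(1+w)^2 .
\]

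\emph{Simplifying.} Put \(x=2^m\) and \(y=2^{r-m}\), so that \(xy=2^r\). Then
\[
\FE_B=(x-1)^2(2y-1)+x^2(y-1)^2 .
\]
Expanding and using \(xy=2^r\), the terms \(x^2(2y-1)+x^2-2x^2y\) cancel. I expect this to leave
\[
\FE_B=2^{2r}-4\cdot 2^{r}+2(x+y)-1 .
\]
As a check, for \(m=1\) (so \(x=2\), \(y=2^{r-1}\)) the value should match \(T_r=4h^2+2h+1\) with \(h=2^{r-1}-1\). The symmetric case \(m=r-1\) should match as well, since the formula depends only on \(x+y\).

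\emph{Maximizing.} It remains to show that \(x+y=2^m+2^{r-m}\) is maximized over \(1\le m\le r-1\) exactly at the endpoints. This holds because \(m\mapsto 2^m+2^{r-m}\) is strictly convex. Hence
\[
\FE_B\le T_r ,
\]
with equality only at \(m\in\{1,r-1\}\), and \(\Phi_B=\tfrac12\log\FE_B\le\tfrac12\log T_r\).

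The main obstacle is bookkeeping rather than ideas. I need to confirm that the aggregation in \Cref{rem:aggregation} is legitimate when the boundary edges at \(u\) and at \(v\) go to distinct or shared external vertices; they are separate coordinates of \(\partial B\) in either case. I also need to get the algebraic cancellation right. If the closed form fails to collapse as expected, I would fall back on directly comparing \(f(m)-f(1)\) as a polynomial in \(x\) on \([2,2^{r-1}]\).
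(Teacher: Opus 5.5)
Your proposal is correct and follows the paper's proof: you use the same four-pattern evaluation \(\FE_B=w^2+2aw^2+a^2(1+w)^2\), and your closed form \(2^{2r}-4\cdot2^r+2(x+y)-1\) gives exactly \(T_r-\FE_B=2(2^{r-1}+2-2^m-2^{r-m})\). The paper finishes by factoring this as \(4(2^{m-1}-1)(2^{r-m-1}-1)\), which is equivalent to your strict-convexity argument for \(2^m+2^{r-m}\).
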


\begin{proof}
Put \(p=2^m-1\) and \(h=2^{r-m}-1\).
After the local aggregation of \Cref{rem:aggregation}, each endpoint has a single effective boundary option of weight \(h\).  There are four effective hit-patterns.  If neither endpoint is hit from the boundary, the internal edge must be selected, giving conditional weight \(p\).  If exactly one endpoint is hit, the internal edge is still needed to cover the other endpoint, again giving weight \(p\).  If both endpoints are hit, the internal edge may be either absent or present, giving conditional weight \(1+p\).  Hence \(\FE_B=p^2+2hp^2+h^2(1+p)^2\).
Substituting \(p=2^m-1\) and \(h=2^{r-m}-1\), we get
\[
  T_r-\FE_B
  =2\bigl(2^{r-1}+2-2^m-2^{r-m}\bigr)
  =4(2^{m-1}-1)(2^{r-m-1}-1).
\]
This is nonnegative for \(1\leq m\leq r-1\), with equality only when \(m=1\) or \(m=r-1\).  By \Cref{lem:local-free-energy}, \(\Phi_B\leq \frac12\log T_r\).
\end{proof}

We shall use the following standard consequence of fractional matching theory \cite{schrijver2003combinatorial}.

\begin{lemma}
\label{lem:p2m}
Let \(S\) be a finite simple graph with a fractional perfect matching.  Then \(S\) contains a spanning subgraph whose components are single edges and odd cycles.
\end{lemma}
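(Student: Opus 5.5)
The plan is to pass from the fractional perfect matching to a basic optimal solution of the fractional perfect matching polytope, and then read off its structure. The polytope
\[
  P(S)=\Bigl\{x\in\mathbb R_{\geq0}^{E(S)}:\ \textstyle\sum_{e\ni v}x_e=1\ \text{for all }v\in V(S)\Bigr\}
\]
is nonempty by hypothesis and bounded, since \(0\leq x_e\leq1\). It therefore has a vertex \(x^*\). I will use the classical characterization of such vertices: every coordinate of \(x^*\) lies in \(\{0,\tfrac12,1\}\), and the edges with \(x^*_e=\tfrac12\) form vertex-disjoint odd cycles. This is the standard result of Balinski type recorded in \cite{schrijver2003combinatorial}, and I would cite it. For completeness I would also sketch its proof, as follows.

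Let \(E^+=\{e:x^*_e>0\}\). At a vertex, the tight constraints that determine \(x^*\) are the \(|V|\) degree equalities together with \(x_e=0\) for \(e\notin E^+\). Hence the incidence matrix restricted to \(E^+\) has full column rank \(|E^+|\). Consider a component \(K\) of \((V,E^+)\).
\begin{itemize}
  \item Full column rank forces \(|E(K)|\leq|V(K)|\), so \(K\) is a tree or is unicyclic.
  \item If \(K\) contains an even cycle, then alternating \(\pm\varepsilon\) around that cycle gives a nonzero kernel vector, contradicting full rank. So any cycle in \(K\) is odd.
  \item If \(K\) is a tree, the degree equations are solved from the leaves inward: each leaf edge is forced to weight \(1\). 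Removing that edge and its two endpoints leaves a component on which the same argument applies inductively. So \(K\) is a single edge of weight \(1\).
  \item If \(K\) is unicyclic with an odd cycle, I peel off any pendant trees in the same way. Each pendant leaf edge has weight \(1\), which forces the adjacent edges to weight \(0\), contradicting their membership in \(E^+\). So \(K\) is exactly the odd cycle, and symmetry of the equations on the cycle gives weight \(\tfrac12\) on every edge.
\end{itemize}

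Finally I take \(H=(V,E^+)\). Every vertex is covered, since \(\sum_{e\ni v}x^*_e=1>0\), so \(H\) is spanning. By the analysis above, its components are single edges (weight \(1\)) and odd cycles (weight \(\tfrac12\)), which is exactly the claimed spanning subgraph.

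The main obstacle is the case analysis of a component in the third and fourth items, specifically ruling out pendant trees attached to the cycle in the unicyclic case. The leaf-peeling argument handles this, because a leaf must receive weight \(1\), which conflicts with positivity of the neighbouring edges. Everything else is standard linear algebra.
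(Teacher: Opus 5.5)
Your proposal is correct and is essentially the paper's argument: take an extreme point of the polytope of fractional perfect matchings, apply the half-integrality theorem (the paper reaches it by observing that this polytope is a face of the fractional matching polytope, whereas you apply the structure directly and sketch a self-contained rank argument), and note that the positive support is spanning because every vertex has total weight \(1\). Your sketch is sound, and the tree case closes without induction: the leaf edge has weight \(1\), so the leaf's neighbour can carry no further positive edge and the component is that single edge.
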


\begin{proof}
Consider the polytope
\(P=\{x\in\R_{\geq0}^{E(S)}:x(E_S(v))=1\text{ for every }v\in V(S)\}\),
where \(x(E_S(v))\) is the total weight of the edges incident to \(v\).
It is nonempty by assumption.  Moreover, \(P\) is the face of the
fractional matching polytope
\(\{x\in\R_{\geq0}^{E(S)}:x(E_S(v))\leq1\text{ for every }v\in V(S)\}\)
on which all vertex constraints are tight.  Hence an extreme point of
\(P\) is an extreme point of the fractional matching polytope.  Choose
such an extreme point.  By the half-integrality theorem for the
fractional matching polytope, its positive support is a disjoint union
of edges with value \(1\) and odd cycles with value \(1/2\).  Since
\(x(E_S(v))=1\) for every vertex, this support spans \(S\), and is
the required spanning subgraph.
\end{proof}

The next lemma handles the second type of region arising from such a spanning subgraph.

\begin{lemma}
\label{lem:open-cycle-region}
Let \(F\) be a loopless \(r\)-regular multigraph with maximum edge multiplicity at most \(r-1\).  Let \(C\) be an odd cycle in \(\supp(F)\), and use \(V(C)\) as a region, with all induced support edges internal.  Suppose that at least one support edge leaves \(V(C)\).  If each nonzero total outgoing multiplicity \(b_v=\sum_{e\ni v,\ e\not\subseteq V(C)}m_e\) is at most \(r-1\), then \(\Phi_C\leq|V(C)|\log T_r/4\).
\end{lemma}

\begin{proof}
By \Cref{lem:local-free-energy}, it is enough to bound \(\FE_C\).  We realize this local free-energy partition function as an ordinary edge-cover partition function on a doubled multigraph.

Take two copies of the induced region on \(V(C)\).  For each vertex \(v\in V(C)\), aggregate all outgoing boundary multiplicities at \(v\).  If \(b_v>0\), add an edge of multiplicity \(b_v\) joining the two copies of \(v\).  By \Cref{rem:aggregation}, summing over the common boundary state in \(\FE_C\) is exactly the same as deciding whether to use these added edges.  Therefore
\[
  \FE_C=
  \sum_{D\subseteq V(C)}
  \left(\prod_{v\in D}(2^{b_v}-1)\right)
  Z_C(V(C)\setminus D)^2,
\]
and this is precisely the edge-cover partition function of the doubled multigraph, written in the support-edge weighted form, so \(\FE_C=\ec(\widehat C)\), where \(\widehat C\) is the doubled multigraph.

The multigraph \(\widehat C\) is loopless and \(r\)-regular: at each copy of \(v\), the induced internal multiplicity is \(r-b_v\), and the edge joining the two copies contributes the remaining multiplicity \(b_v\).  Its maximum multiplicity is at most \(r-1\): this is true for the original support edges, and it is true for the added edges by hypothesis.  Moreover, the support of \(\widehat C\) has a perfect matching.  Choose a vertex \(v\) with \(b_v>0\), use the edge joining the two copies of \(v\), and match the two even paths obtained by deleting \(v\) from the two copies of the odd cycle, ignoring any chords.  Using the edges of this perfect matching as two-vertex regions, \Cref{lem:entropy-decomposition,lem:two-vertex-region} gives
\(\ec(\widehat C)\leq T_r^{|V(\widehat C)|/4}=T_r^{|V(C)|/2}\).
Thus \(\Phi_C=\frac12\log\FE_C\leq |V(C)|\log T_r/4\).
\end{proof}

\begin{proof}[Proof of \Cref{thm:odd-r}]
Let \(F\) be a loopless \(r\)-regular multigraph with maximum multiplicity at most \(r-1\), where \(r\) is odd.  By \Cref{lem:duality}, it suffices to prove \eqref{eq:main-edge-cover-bound}.

The support graph \(S\) has a fractional perfect matching, namely \(x_e=m_e/r\), since the incident multiplicities at each vertex sum to \(r\).  By \Cref{lem:p2m}, choose a spanning subgraph of \(S\) whose components are single edges and odd cycles.  Use the vertex sets of its components as the regions, and include all support edges induced inside each component as internal edges.

A single-edge component is handled by \Cref{lem:two-vertex-region}.  Now let \(C\) be an odd-cycle component.  If no support edge leaves \(V(C)\), then the induced multigraph \(F[V(C)]\), including any chords and all parallel copies, is an \(r\)-regular connected component on the odd number \(|V(C)|\) of vertices.  This is impossible by the handshaking identity \(2|E(F[V(C)])|=r|V(C)|\), since both \(r\) and \(|V(C)|\) are odd.  Thus \(C\) has nonempty boundary.  Also, at each vertex of \(C\), the two cycle support edges already have positive multiplicity, so \(b_v=r-\sum_{e\ni v,\ e\subseteq V(C)}m_e\leq r-2\).
Hence \Cref{lem:open-cycle-region} gives \(\Phi_C\leq|V(C)|\log T_r/4\).

Summing the regional bounds with \Cref{lem:entropy-decomposition}, we obtain \(\log\ec(F)\leq|V(F)|\log T_r/4\).
The theorem follows from \Cref{lem:duality}.
\end{proof}

\section{Computation for the case \texorpdfstring{$(r,d)=(4,2)$}{(r,d)=(4,2)}}\label{sec:r4}

In this section we prove \Cref{thm:r4}, keeping the entropy decomposition of \Cref{sec:entropy} throughout.  We used the parity of \(r\) in only one place, to rule out an odd-cycle region with empty boundary.  For \(r=4\) such a region can occur, so we will instead cut it into smaller regions.  Only one new type of region arises this way, a root vertex with a few matched pairs absorbed around it, and for these regions we will resort to a finite computation rather than an inequality.  Throughout this section \(T_4=211\).

\begin{lemma}
\label{lem:rooted-region-certificate}
Let \(1\leq\ell\leq4\), and let \(R\) be a region consisting of a root \(o\) together with \(\ell\) disjoint pairs \(\{a_i,b_i\}\).  Put \(U=\{a_1,b_1,\ldots,a_\ell,b_\ell\}\), and suppose that the support edges internal to \(R\) are the following.
\begin{enumerate}[(i)]
  \item The edges from \(o\) to \(U\).  Writing \(m_{ox}\) for the multiplicity of \(ox\), with the convention \(m_{ox}=0\) when \(ox\) is not an edge, these satisfy \(m_{ox}\leq3\), \(\sum_{x\in U}m_{ox}=4\), and \(m_{oa_i}+m_{ob_i}>0\) for every \(i\).
  \item The \(\ell\) pair edges \(a_ib_i\), of multiplicity at most \(3\).  We write \(m_{i(x)}\) for the multiplicity of the pair edge at \(x\in U\).
  \item At most one further edge \(xy\) between any two vertices of \(U\) lying in different pairs, of multiplicity \(m_{xy}\leq3\).
\end{enumerate}
Suppose also that \(o\) is incident to no boundary edge, that
\(m_{ox}+m_{i(x)}+\sum_y m_{xy}\leq4\) for every \(x\in U\), and that the total
boundary multiplicity at \(x\) is the slack
\(L_x=4-m_{ox}-m_{i(x)}-\sum_y m_{xy}\).
Then \(\Phi_R\leq(2\ell+1)\log T_4/4\).
\end{lemma}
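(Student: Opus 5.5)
By Lemma~\ref{lem:local-free-energy}, it suffices to prove the integer inequality $\FE_R^2\leq T_4^{2\ell+1}$, which equals $\FE_R\leq T_4^{(2\ell+1)/2}$ and avoids irrational exponents when $\ell$ is odd. First I reduce the boundary to a finite set of data. The root $o$ has no boundary edge, so by Remark~\ref{rem:aggregation} each $x\in U$ carries one effective boundary option of weight $2^{L_x}-1$, where $L_x$ is the slack in the statement. This gives
\[
  \FE_R=\sum_{D\subseteq U}\Bigl(\prod_{x\in D}(2^{L_x}-1)\Bigr)Z_R(R\setminus D)^2 .
\]
So $\FE_R$ depends only on the multiset of internal multiplicities $m_{ox}$, $m_{i(x)}$, $m_{xy}$. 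Once those are fixed, the slacks are determined. Since $\ell\leq4$, every multiplicity is at most $3$, $\sum_x m_{ox}=4$, and there is at most one cross edge per pair of vertices from different pairs, only finitely many configurations occur.

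Before computing, I would try the structural shortcut used in Lemma~\ref{lem:open-cycle-region}. Form the doubled multigraph $\widehat R$ from two copies of $R$, joining the two copies of each $x\in U$ with $L_x>0$ by an edge of multiplicity $L_x$. Then $\FE_R=\ec(\widehat R)$, and $\widehat R$ is loopless, $4$-regular, has multiplicity at most $3$, and has $2(2\ell+1)$ vertices. If the support of $\widehat R$ has a perfect matching, Lemmas~\ref{lem:entropy-decomposition} and~\ref{lem:two-vertex-region} give $\ec(\widehat R)\leq T_4^{(2\ell+1)/2}$ immediately.

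Such a matching is easy to find when some $x$ is adjacent to $o$ and has $L_x>0$. Match $o$ to $x$'s partner in its pair, or to another neighbour, in each copy. Use the cross edge between the copies at a suitable vertex. Match the remaining pairs by their pair edges $a_ib_i$. Each configuration where this works is settled without computation, and I would split off as many as possible this way, for example by a case split on which vertices of $U$ are neighbours of $o$ and have positive slack.

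The remaining configurations, where $o$ cannot be matched inside $\widehat R$, are handled by direct evaluation. For each one I compute $Z_R(W)$ for all $W\supseteq\{o\}$ by summing over subsets of support edges, with weights $2^m-1$. This is at most about $2^{4+4+6}$ subsets when $\ell=4$, so it is trivial by computer. I then check the integer inequality $\FE_R^2\leq 211^{2\ell+1}$. Symmetries reduce the enumeration: permuting pairs, swapping $a_i\leftrightarrow b_i$, and relabelling the cross edges. The certificate is the list of orbit representatives with their exact values of $\FE_R$.

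The main obstacle is exactly this non-matchable residue, together with making the case list provably exhaustive. I expect the tight cases to be those where $o$ spreads its multiplicity $4$ as $1+1+1+1$ over four different pairs. Here the two-vertex bound gives no slack, so the certificate must rely on exact integer arithmetic rather than estimates.
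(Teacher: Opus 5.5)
Your proposal is correct and is essentially the paper's proof. You reduce via \Cref{lem:local-free-energy} to the integer inequality $\FE_R^2\le T_4^{2\ell+1}$, write $\FE_R$ as a sum over boundary states $D\subseteq U$ with aggregated weights $2^{L_x}-1$ (your $Z_R(R\setminus D)$ is the paper's $\widetilde Z_R(D)$), and verify it by exhaustive exact-arithmetic enumeration of the finitely many configurations up to relabelling.

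The doubled-multigraph matching shortcut is a valid optional pruning that the paper does not use. Note that the matching you sketch actually needs a root neighbour whose pair-partner has positive slack, not a root neighbour with positive slack, so you should test matchability per configuration. Also, the paper's certificate has its tightest case at $\ell=2$, with $\FE_R^2/T_4^{5}\approx0.9945$, not at the $\ell=4$ pattern $1+1+1+1$.
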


\begin{proof}
By \Cref{lem:local-free-energy}, the assertion is equivalent to \(\FE_R^2\leq T_4^{2\ell+1}\).  We first put \(\FE_R\) in a form that a computer can evaluate exactly.  This part is bookkeeping, and the content of the lemma lies in the enumeration afterwards.

Let \(E_R(o)\) be the set of internal edges at the root, and let \(E_{\mathrm{nr}}\) be the set of remaining internal edges, that is, the pair edges together with the further edges in (iii).  After the aggregation of \Cref{rem:aggregation}, each \(x\in U\) has a single boundary option of weight \(2^{L_x}-1\), so a boundary state is a subset \(D\subseteq U\) of non-root vertices hit from outside, of weight \(\prod_{x\in D}(2^{L_x}-1)\).  Since the root has no boundary edge, it must be covered by a nonempty \(J_o\subseteq E_R(o)\).  Write \(V_U(J_o)\) for the set of vertices of \(U\) hit by \(J_o\).  The vertices that then remain to be covered inside \(R\) are those of \(U\setminus(D\cup V_U(J_o))\).  For \(U'\subseteq U\), set
\[
  Z_R(U')=
  \sum_{\substack{J\subseteq E_{\mathrm{nr}}\\ J\text{ covers }U'}} w(J).
\]
Note that the vertices outside \(U'\) may be covered by \(J\) as well.  Only the vertices of \(U'\) are required to be.
The internal partition function of \(R\) at boundary state \(D\) is then
\begin{equation}\label{eq:rooted-A}
  \widetilde Z_R(D)=
  \sum_{\varnothing\ne J_o\subseteq E_R(o)}
    w(J_o)\,Z_R\bigl(U\setminus(D\cup V_U(J_o))\bigr),
\end{equation}
so that \(A_R(D)=\bigl(\prod_{x\in D}(2^{L_x}-1)\bigr)^{1/2}\widetilde Z_R(D)\) and
\begin{equation}
\label{eq:rooted-FE}
  \FE_R=
  \sum_{D\subseteq U}
    \left(\prod_{x\in D}(2^{L_x}-1)\right) \widetilde Z_R(D)^2.
\end{equation}

Up to symmetries, the regions satisfying (i)--(iii) form a finite list.  Indeed \(\ell\leq4\) by hypothesis, and the degree caps at the vertices of \(U\) bound both the multiplicities and the number of edges allowed in (iii).  The list is far too long to inspect by hand, so we have verified the inequality by having a computer program run through it and evaluate \eqref{eq:rooted-A} and \eqref{eq:rooted-FE} in exact integer arithmetic.  We describe the enumeration in \Cref{app:rooted-certificate}, where we also record the resulting maxima of \(\FE_R\) for \(\ell=1,2,3,4\) and reproduce the code.  Each of these maxima has square strictly smaller than \(T_4^{2\ell+1}\), which is what we need.  The margin is not generous.  It is smallest at \(\ell=2\), where the ratio \(\FE_R^2/T_4^{5}\) is about \(0.9945\).
\end{proof}

\begin{proof}[Proof of \Cref{thm:r4}]
Let \(F\) be a loopless \(4\)-regular multigraph with maximum multiplicity at most \(3\).  As before, the support graph has a fractional perfect matching \(x_e=m_e/4\).  By \Cref{lem:p2m}, choose a spanning subgraph of the support whose components are single edges and odd cycles.  Single-edge components are handled by \Cref{lem:two-vertex-region}.  For an odd-cycle component with nonempty boundary, the two cycle edges at each vertex already have positive multiplicity, so the outgoing multiplicity is at most \(2\) and \Cref{lem:open-cycle-region} applies.

It remains to handle a closed odd-cycle component \(C\).  If \(C\) has no chord, then the equations \(m_{i-1}+m_i=4\) around the odd cycle force all cycle multiplicities to be \(2\).  Thus every cycle support edge has weight \(3\).  Recording whether the previous edge is selected gives the transfer matrix
\[
  K=\begin{pmatrix}0&3\\1&3\end{pmatrix}.
\]
Then its edge-cover partition function is \(\tr K^k\), where \(k=|V(C)|\) is the cycle length.  The eigenvalues are \(\kappa_\pm=(3\pm\sqrt{21})/2\), and \(\kappa_-<0\), so \(\tr K^k=\kappa_+^k+\kappa_-^k<\kappa_+^k\) for odd \(k\).  Note that \(\kappa_+^4=206.6\ldots<T_4\), again with little to spare.  Since this region has no boundary, \(\Phi_C\) is the logarithm of the partition function, so \(\Phi_C\leq|V(C)|\log T_4/4\).

Suppose now that \(C\) has a chord, meaning a support edge induced by \(V(C)\) but not belonging to the chosen cycle.  This is the case that parity ruled out for odd \(r\).  Choose a root vertex \(o\) incident to a chord.  Deleting \(o\) from the cycle leaves an even path, and we fix the perfect matching of this path into consecutive pairs.  We now refine the partition of \(V(C)\).  Let \(R\) consist of \(o\) together with precisely those matched pairs that contain at least one support-neighbour of \(o\), and keep every remaining matched pair as a separate two-vertex region.  All support edges crossing these refined parts are boundary edges for the final partition, even though they lie inside the original closed component.

The region \(R\) meets the hypotheses of \Cref{lem:rooted-region-certificate}, with the absorbed matched pairs playing the role of the pairs \(\{a_i,b_i\}\).  Indeed, \(o\) has support degree at most \(4\), since its multiplicities sum to \(4\), so at most four pairs are absorbed, and at least one is absorbed because \(o\) lies on a chord.  Every support edge at \(o\) has its other endpoint in an absorbed pair and is therefore internal to \(R\), so the internal multiplicities at \(o\) sum to \(4\).  Since the component is closed, \(o\) has no boundary edge in the refined partition.  Each absorbed pair contains a support-neighbour of \(o\) by construction, which gives (i), the bound \(m_{ox}\leq3\) being the global multiplicity bound.  The support graph is simple, so the matched edge is the only internal edge inside an absorbed pair, which gives (ii).  Every other internal edge of \(R\) therefore joins two different absorbed pairs, which gives (iii).  The edges of this last kind are the chords among absorbed non-root vertices together with the path edges not chosen as matched edges.  Finally, note that for a non-root vertex \(x\) of \(R\), the slack \(L_x\) is exactly the total multiplicity of the support edges from \(x\) to vertices outside \(R\).  This is where we use that \(F\) is \(4\)-regular.  All of these edges are boundary edges of the refined partition, and so \Cref{lem:rooted-region-certificate} gives \(\Phi_R\leq(2\ell+1)\log T_4/4=|R|\log T_4/4\).

Every unabsorbed matched pair \(B=\{u,v\}\) is used as a two-vertex region.  Its matched edge has some multiplicity \(1\leq m\leq3\), and is the only support edge internal to \(B\) by simplicity of the support graph, so every other support edge at \(u\) or \(v\) crosses the refined partition.  Thus the remaining incident multiplicity at each endpoint is \(4-m\), and \Cref{lem:two-vertex-region} gives \(\Phi_B\leq\frac12\log T_4=|B|\log T_4/4\).
Since \(R\) and the unabsorbed pairs partition \(V(C)\), these two estimates sum to
\(\sum_P\Phi_P\leq|V(C)|\log T_4/4\), where \(P\) runs over the refined regions of the component.

Now perform this refinement on every closed odd-cycle component with a chord, leaving the remaining components as they are, and apply \Cref{lem:entropy-decomposition} to the resulting partition.  Combining the regional estimates gives \(\log\ec(F)\leq|V(F)|\log T_4/4\), so \Cref{conj:bbn} holds for \((r,d)=(4,2)\) by \Cref{lem:duality}.
\end{proof}

\section{Concluding remarks}

It remains open whether \(H_{r,d}\) is exactly optimal among cross-edge-free hypergraphs, even for \(r=3\).  It would also be interesting to replace the finite certificate of \Cref{lem:rooted-region-certificate} by a conceptual inequality.  More broadly, the local-free-energy perspective seems worth pursuing beyond the degree-two setting treated here.

\section*{Acknowledgements}
Part of this work began in Spring 2023 during Math 496, Research in Mathematics, at Rutgers University.  Zeyu Zheng thanks Bhargav Narayanan for supervising the initial project and for helpful early discussions. Independently, Sarantis and Tetali had collaborated with Will Perkins on the Balogh-Bollob\'as-Narayanan conjecture, and thank Perkins for his insight on using the occupancy method to tackle the conjecture.

\section*{Disclosure of AI Use}
The first version of this paper, which contained a proof of a weaker form of \Cref{thm:cross-edge-free}, was written without the use of AI tools.  In the course of expanding the paper, the authors used AI assistants (several versions of Claude, Cursor, and GPT) to explore and test candidate proof strategies and to write and check numerical computations, including the verification program of \Cref{app:rooted-certificate}.

\printbibliography

\newpage
\appendix

\newpage
\section{Proof of \Cref{prop:counterex}}
\label{sec:appendix-counterexample}

\begin{proof}[Proof of \Cref{prop:counterex}]
Fix \(r\geq3\), and put \(s=r-1\).  Label the marked vertices of
\(H_{r,d}\) as \(u_1,\ldots,u_d\) and its unmarked blocks as
\(P_1,\ldots,P_d\), each of size \(s\).  After relabelling, the deleted
matching consists of the edges \(\{u_i\}\cup P_i\).  Thus the edges of
\(\widetilde H_{r,d}\) are precisely \(\{u_i\}\cup P_j\) with \(i\neq j\),
and \(\widetilde H_{r,d}\) is \(r\)-uniform and \((d-1)\)-regular on
\(rd\) vertices.  Any three pairwise-intersecting edges share either
their marked vertex or their unmarked block.  In either case, all
three pairwise intersections equal the common intersection, so
\(\widetilde H_{r,d}\) has no cross-edges.

Put
\[
    b(\lam)=(1+\lam)^s-\lam^s,
\]
the partition function of one unmarked block when full occupation is
forbidden.  We compute the partition function of \(\widetilde H_{r,d}\)
by conditioning on the occupied marked vertices.  If none is occupied,
all unmarked vertices are unrestricted.  If exactly \(u_i\) is
occupied, the block \(P_i\) is unrestricted and every other block must
be nonfull.  If at least two marked vertices are occupied, every block
must be nonfull.  Consequently,
\begin{align}
    Z_{\widetilde H_{r,d}}(\lam)
    &=(1+\lam)^{sd}
      +d\lam(1+\lam)^s b(\lam)^{d-1}\notag\\
    &\quad+\big((1+\lam)^d-1-d\lam\big)b(\lam)^d\notag\\
    &=\big((1+\lam)b(\lam)\big)^d
      +(1+\lam)^{sd}-b(\lam)^d\notag\\
    &\quad+d\lam^r b(\lam)^{d-1}.
    \label{eq:counterexample-partition}
\end{align}
For comparison, for every \(k\geq1\),
\[
    Z_{H_{r,k}}(\lam)
    =(1+\lam)^{sk}
      +\big((1+\lam)^k-1\big)b(\lam)^k.
\]

To compare occupancies at \(\lam=1\), write
\[
    q=2^s-1,\qquad
    \rho=\frac{q+1}{2q},\qquad
    \beta=\frac12-\frac{s}{2rq},\qquad
    \gamma=\frac{q-s}{2rq}.
\]
Since \(s\geq2\), we have \(q>s\), \(\gamma>0\), and
\(1/2<\rho<1\).  Using \(b(1)=q\) and
\(b'(1)=s(q-1)/2\), differentiation of the partition functions above
gives
\begin{align}
    \alpha_{H_{r,k}}(1)
    &=\beta+
    \frac{-\gamma\rho^k+2^{-k}/(2r)}
         {1+\rho^k-2^{-k}},
    \label{eq:counterexample-benchmark-occupancy}\\[1ex]
    \alpha_{\widetilde H_{r,d}}(1)
    &=\beta+
    \frac{-\gamma\rho^d+
      \dfrac{2^{-d}}{2r}\left(1+\dfrac{s+2-d}{q}+\dfrac{s}{q^2}\right)}
         {1+\rho^d+\left(\dfrac{d}{q}-1\right)2^{-d}}.
    \label{eq:counterexample-occupancy}
\end{align}
Here the normalizing vertex counts are \(rk\) and \(rd\),
respectively.

For fixed \(r\), both denominators tend to \(1\) as \(d\to\infty\)
with \(k=d-1\).  Moreover, \(d2^{-d}=o(\rho^{d-1})\), since
\(\rho>1/2\).  Subtracting
\eqref{eq:counterexample-benchmark-occupancy} from
\eqref{eq:counterexample-occupancy} therefore gives
\[
    \lim_{d\to\infty}
    \frac{\alpha_{\widetilde H_{r,d}}(1)-\alpha_{H_{r,d-1}}(1)}
         {\rho^{d-1}}
    =\gamma(1-\rho)
    =\frac{(q-s)(q-1)}{4rq^2}>0.
\]
It follows that \(\alpha_{\widetilde H_{r,d}}(1)>
\alpha_{H_{r,d-1}}(1)\) for all sufficiently large \(d\).
Since these two hypergraphs are both \(r\)-uniform and
\((d-1)\)-regular, this disproves \Cref{conj:bbn_general} (i)
for every \(r\geq3\).

For a concrete instance, taking \(r=3\) and \(d=10\) in the exact
formulas above gives
\[
    \alpha_{\widetilde H_{3,10}}(1)-\alpha_{H_{3,9}}(1)
    =\frac{15440593977}{636263820007681}>0.
\]
\end{proof}

\newpage
\section{The finite certificate}
\label{app:rooted-certificate}

\RestyleAlgo{ruled}

\subsection{Algorithm}
\label{app:rooted-certificate:algo}

This appendix records the finite verification used for the rooted-region
certificate.  Put \(U=\{a_1,b_1,\ldots,a_\ell,b_\ell\}\).  A boundary state is a
subset \(D\subseteq U\) of non-root vertices hit from outside.  The value of
\(\FE_R\) is invariant under relabelling of the matched pairs and of the two
endpoints inside each pair.  For \(\ell\leq3\), the program enumerates all labelled
root patterns and uses the pair relabelling symmetry to put
\(m_1,\ldots,m_\ell\) in nondecreasing order.  For \(\ell=4\), the total root
multiplicity is \(4\) and every matched pair must be hit by the root, so each
pair receives root multiplicity exactly \(1\).  After first permuting the pairs
so that \(m_1\leq\cdots\leq m_4\) and then swapping endpoints within each pair,
every such root pattern is represented by
\((1,0,1,0,1,0,1,0)\).  The verification is as follows.

\begin{algorithm}[h]
\caption{Verification of \Cref{lem:rooted-region-certificate}.}
\KwIn{A number \(\ell\in\{1,2,3,4\}\) of matched pairs.}
\ForEach{root pattern \((m_{ox})_{x\in U}\) with total root multiplicity \(4\), every pair hit, and \(m_{ox}\in\{0,1,2,3\}\)}{
  \ForEach{nondecreasing pattern \(1\leq m_1\leq\cdots\leq m_\ell\leq3\)}{
    \If{\(m_{ox}+m_{i(x)}>4\) for some \(x\in U\)}{discard this pattern\;}
    Enumerate all additional non-root edges between different matched pairs, with multiplicities allowed by the remaining degree caps\;
    \ForEach{resulting legal rooted region \(R\)}{
      Compute \(L_x\), all \(Z_R(U')\), all \(\widetilde Z_R(D)\), and \(\FE_R\) from \eqref{eq:rooted-FE}\;
      Record the maximum and check \((\max\FE_R)^2<T_4^{2\ell+1}\)\;
    }
  }
}
\end{algorithm}

In the step enumerating additional non-root edges, the program considers every
unordered pair of endpoints lying in different matched pairs as a possible
support edge, assigns it multiplicity \(0,1,2,3\), and keeps precisely the
choices allowed by the remaining degree caps.  Thus every rooted region
satisfying the hypotheses is represented, up to the relabellings above.  For
each represented region, the program computes the weights of all subsets of
non-root internal edges by their covered vertex set and applies a subset zeta
transform to obtain all \(Z_R(U')\).  It then enumerates all nonempty subsets of
root support edges to compute \(\widetilde Z_R(D)\), and finally evaluates
\(\FE_R\) from \eqref{eq:rooted-FE} using exact integer arithmetic.

For \(\ell=4\), the program fixes the representative
\((1,0,1,0,1,0,1,0)\).  The resulting certificate gives the maxima for
\(\ell=1,2,3\) and, for \(\ell=4\), the maxima for each nondecreasing
matched-edge pattern:
\[
\begin{array}{c|c@{\qquad}c|c@{\qquad}c|c}
(m_1,m_2,m_3,m_4)&\max\FE_R&
(m_1,m_2,m_3,m_4)&\max\FE_R&
(m_1,m_2,m_3,m_4)&\max\FE_R\\ \hline
1111&26730624288&1112&26473597530&1113&27070292187\\
1122&26219052000&1123&26810009295&1133&27413447163\\
1222&25966963530&1223&26552239155&1233&27149875035\\
1333&27760096317&2222&25717308192&2223&26296957287\\
2233&26888847507&2333&27493201797&3333&28110245535
\end{array}
\]
Taking the maximum over the printed \(\ell=4\) lines gives the following summary:
\[
\begin{array}{c|c}
\ell&\max_R\FE_R\\ \hline
1&3007\\
2&644925\\
3&132799423\\
4&28110245535
\end{array}
\]
Each displayed maximum has square strictly smaller than \(T_4^{2\ell+1}\), which
is the required certificate inequality.

\subsection{C++ implementation}
\label{app:rooted-certificate:cpp}

The complete \texttt{C++17} source is reproduced below.  The type
\texttt{\_\_int128\_t} is used for exact integer arithmetic in the certificate.

% (lstinputlisting) certificate.cpp
\begin{lstlisting}[language=C++]
#include <algorithm>
#include <array>
#include <iostream>
#include <string>
#include <utility>
#include <vector>

using i128 = __int128_t;

namespace {

constexpr int kDegree = 4;
constexpr int kMaxMult = 3;
constexpr int kMaxPairs = 4;
constexpr int kTarget = 211;
constexpr int kMaxVertices = 2 * kMaxPairs;

constexpr std::array<int, 5> kWeight = {0, 1, 3, 7, 15};

using MatchArray = std::array<int, kMaxPairs>;
using VertexArray = std::array<int, kMaxVertices>;

i128 Pow(i128 base, int exp) {
  i128 ans = 1;
  while (exp-- > 0) ans *= base;
  return ans;
}

std::string Decimal(i128 x) {
  if (x == 0) return "0";
  bool neg = x < 0;
  if (neg) x = -x;
  std::string out;
  while (x > 0) {
    out.push_back(static_cast<char>('0' + x % 10));
    x /= 10;
  }
  if (neg) out.push_back('-');
  std::reverse(out.begin(), out.end());
  return out;
}

struct Edge {
  int u = 0;
  int v = 0;
  int mult = 0;
};

// Implements the manuscript formulas
//   Xi_R = sum_D prod_{x in D}(2^{L_x}-1) Ztilde_R(D)^2,
//   Ztilde_R(D) = sum_{nonempty J_o} w(J_o) Z_R(U \ (D union V_U(J_o))).
i128 LocalFreeEnergyPartition(int pairs, const VertexArray& root,
                              const MatchArray& matched,
                              const std::vector<Edge>& extra_edges) {
  const int n = 2 * pairs;
  const int states = 1 << n;
  const int all = states - 1;

  std::vector<i128> z_by_need(states, 0);
  z_by_need[0] = 1;

  auto add_internal_edge = [&](int cover_mask, int weight) {
    std::vector<i128> old = z_by_need;
    for (int mask = 0; mask < states; ++mask) {
      if (old[mask] != 0) z_by_need[mask | cover_mask] += old[mask] * weight;
    }
  };

  VertexArray exposed{};
  exposed.fill(kDegree);
  for (int i = 0; i < pairs; ++i) {
    int u = 2 * i;
    int v = 2 * i + 1;
    int mult = matched[i];
    exposed[u] -= mult;
    exposed[v] -= mult;
    add_internal_edge((1 << u) | (1 << v), kWeight[mult]);
  }
  for (int v = 0; v < n; ++v) exposed[v] -= root[v];
  for (const Edge& e : extra_edges) {
    exposed[e.u] -= e.mult;
    exposed[e.v] -= e.mult;
    add_internal_edge((1 << e.u) | (1 << e.v), kWeight[e.mult]);
  }

  // Subset zeta transform: afterwards z_by_need[U'] = Z_R(U'), the total
  // weight of the internal edge sets covering U'.
  for (int bit = 0; bit < n; ++bit) {
    for (int mask = 0; mask < states; ++mask) {
      if ((mask & (1 << bit)) == 0) z_by_need[mask] += z_by_need[mask | (1 << bit)];
    }
  }

  std::vector<std::pair<int, int>> root_choices;  // (covered mask, weight)
  root_choices.push_back({0, 1});
  for (int v = 0; v < n; ++v) {
    if (root[v] == 0) continue;
    int weight = kWeight[root[v]];
    int current_size = static_cast<int>(root_choices.size());
    for (int i = 0; i < current_size; ++i) {
      root_choices.push_back({root_choices[i].first | (1 << v),
                              root_choices[i].second * weight});
    }
  }

  i128 total = 0;
  for (int boundary = 0; boundary < states; ++boundary) {
    i128 boundary_weight = 1;
    for (int v = 0; v < n; ++v) {
      if ((boundary >> v) & 1) boundary_weight *= kWeight[exposed[v]];
    }
    if (boundary_weight == 0) continue;

    i128 conditional = 0;
    for (int i = 1; i < static_cast<int>(root_choices.size()); ++i) {
      auto [covered_by_root, root_weight] = root_choices[i];
      int need = all & ~(boundary | covered_by_root);
      conditional += static_cast<i128>(root_weight) * z_by_need[need];
    }
    total += boundary_weight * conditional * conditional;
  }
  return total;
}

i128 MaxOverExtraEdges(int pairs, const VertexArray& root,
                       const MatchArray& matched) {
  const int n = 2 * pairs;
  std::vector<Edge> extra_edges;

  VertexArray degree_used{};
  for (int i = 0; i < pairs; ++i) {
    degree_used[2 * i] += matched[i];
    degree_used[2 * i + 1] += matched[i];
  }
  for (int v = 0; v < n; ++v) degree_used[v] += root[v];

  std::vector<std::pair<int, int>> edge_slots;
  for (int u = 0; u < n; ++u) {
    for (int v = u + 1; v < n; ++v) {
      if (u / 2 != v / 2) edge_slots.push_back({u, v});
    }
  }

  i128 maximum = 0;
  auto search = [&](auto&& self, int slot) -> void {
    if (slot == static_cast<int>(edge_slots.size())) {
      maximum = std::max(maximum,
                         LocalFreeEnergyPartition(pairs, root, matched, extra_edges));
      return;
    }

    auto [u, v] = edge_slots[slot];
    int max_mult = std::min({kMaxMult, kDegree - degree_used[u],
                             kDegree - degree_used[v]});
    for (int mult = 0; mult <= max_mult; ++mult) {
      if (mult > 0) {
        degree_used[u] += mult;
        degree_used[v] += mult;
        extra_edges.push_back({u, v, mult});
      }
      self(self, slot + 1);
      if (mult > 0) {
        extra_edges.pop_back();
        degree_used[u] -= mult;
        degree_used[v] -= mult;
      }
    }
  };
  search(search, 0);
  return maximum;
}

template <class Visit>
void ForMatchedPatterns(int pairs, Visit visit) {
  MatchArray matched{};

  auto search = [&](auto&& self, int pair, int least_mult) -> void {
    if (pair == pairs) {
      visit(matched);
      return;
    }
    for (int mult = least_mult; mult <= kMaxMult; ++mult) {
      matched[pair] = mult;
      self(self, pair + 1, mult);
    }
  };
  search(search, 0, 1);
}

bool CheckSmallRootedRegions() {
  std::cout << "pairs max_Xi\n";
  for (int pairs = 1; pairs <= 3; ++pairs) {
    VertexArray root{};
    i128 maximum = 0;

    auto enumerate_roots = [&](auto&& self, int vertex, int remaining) -> void {
      if (vertex == 2 * pairs) {
        if (remaining != 0) return;
        for (int i = 0; i < pairs; ++i) {
          if (root[2 * i] + root[2 * i + 1] == 0) return;
        }
        ForMatchedPatterns(pairs, [&](const MatchArray& matched) {
          bool legal = true;
          for (int v = 0; v < 2 * pairs; ++v) {
            legal &= root[v] + matched[v / 2] <= kDegree;
          }
          if (legal) {
            maximum = std::max(maximum, MaxOverExtraEdges(pairs, root, matched));
          }
        });
        return;
      }
      for (int mult = 0; mult <= std::min(kMaxMult, remaining); ++mult) {
        root[vertex] = mult;
        self(self, vertex + 1, remaining - mult);
      }
      root[vertex] = 0;
    };
    enumerate_roots(enumerate_roots, 0, kDegree);

    std::cout << pairs << " " << Decimal(maximum) << "\n";
    if (maximum * maximum >= Pow(kTarget, 2 * pairs + 1)) {
      std::cerr << "free-energy certificate failed for pairs=" << pairs << "\n";
      return false;
    }
  }
  return true;
}

bool CheckFourPairRootedRegions() {
  VertexArray root = {1, 0, 1, 0, 1, 0, 1, 0};

  std::cout << "(m1,m2,m3,m4) max_Xi\n";
  bool ok = true;
  ForMatchedPatterns(4, [&](const MatchArray& m) {
    i128 maximum = MaxOverExtraEdges(4, root, m);
    std::cout << m[0] << m[1] << m[2] << m[3] << " "
              << Decimal(maximum) << "\n";
    if (maximum * maximum >= Pow(kTarget, 9)) {
      std::cerr << "free-energy certificate failed for matched pattern "
                << m[0] << m[1] << m[2] << m[3] << "\n";
      ok = false;
    }
  });
  return ok;
}

}  // namespace

int main() {
  if (!CheckSmallRootedRegions()) return 1;
  if (!CheckFourPairRootedRegions()) return 1;
  return 0;
}
\end{lstlisting}

\end{document}